\documentclass[11pt,a4paper,reqno]{amsart}
\usepackage{amsmath,amscd}
\usepackage{amssymb}
\usepackage{amsthm}

\usepackage[abbrev,nobysame,alphabetic]{amsrefs}
\usepackage[ocgcolorlinks, linkcolor=blue]{hyperref}

\usepackage{graphicx}

\usepackage{mathrsfs}
\usepackage{mathtools}
\mathtoolsset{showonlyrefs}
\usepackage[fleqn]{nccmath}

\usepackage{calc}
{\begin{list}{\arabic{enumi}.}{\usecounter{enumi}%
			\setlength{\labelsep}{0.5em}%
			\settowidth{\labelwidth}{(\arabic{enumi})}%
			\setlength{\leftmargin}{\labelwidth+\labelsep}}}%
	{\end{list}}

\usepackage{comment}
\usepackage{xcolor}

\DeclareRobustCommand{\SkipTocEntry}[5]{}

\newcommand{\I}{\mathrm{i}}
\DeclareMathOperator{\tr}{\mathrm{tr}}

\newcommand{\PD}{\partial}
\newcommand{\N}{\mathbb{N}}

\newcommand{\Beq}{\begin{equation}}
	\newcommand{\Eeq}{\end{equation}}
\newcommand{\beq}{\begin{equation*}}
	\newcommand{\eeq}{\end{equation*}}
\newcommand{\bal}{\begin{align}}
	\newcommand{\eal}{\end{align}}

\usepackage{mathtools}

\newcommand{\tblue}[1]{{\color{black}{#1}}}

\newcommand{\p}{\partial}

\newtheorem{theorem}{Theorem}[section]

\newtheorem{lemma}[theorem]{Lemma}
\newtheorem{proposition}[theorem]{Proposition}

\theoremstyle{definition}
\newtheorem{definition}{Definition}[section]

\newtheorem{remark}{Remark}[section]

\newcommand{\Lc}{\mathcal{L}}
\newcommand{\R}{\mathbb{R}}

\newcommand{\Rn}{\mathbb{R}^n}
\newcommand{\norm}[1]{\lVert #1 \rVert}
\newcommand{\abs}[1]{\lvert #1 \rvert}

\newcommand{\qum}[1]{\quad\mbox{#1}}
\newcommand{\lr}{\langle}
\newcommand{\rn}{\rangle}

\newcommand{\supp}{\mathop{\rm supp}}

\newcommand{\df}{\mathrm{d}}

\newcommand{\agl}[1][\cdot]{ \langle {#1} \rangle}
\allowdisplaybreaks

\DeclarePairedDelimiter\pair{\langle}{\rangle}
\DeclareMathOperator{\vspan}{span}

\numberwithin{equation}{section}

\usepackage{comment}
\usepackage{float}
\usepackage{tikz}
\usetikzlibrary{shapes,arrows.meta,quotes,angles,positioning,patterns,decorations.pathreplacing,calc}
\def\MarkRightAngle[size=#1](#2,#3,#4){
	\draw ($(#3)!#1!(#2)$) -- 
	($($(#3)!#1!(#2)$)!#1!90:(#2)$) --
	($(#3)!#1!(#4)$);
}

\usepackage{todonotes}

\title[]{An Inverse problem for a fourth order nonlinear Schr\"odinger equation (NLS)}

\author[R. K. Mishra, S. K. Sahoo, and C. Thakkar]{Rohit Kumar Mishra,  Anamika Purohit and Suman Kumar Sahoo}
\address{Department of Mathematics, IIT Gandhinagar, Gujarat, India}
\email{rohit.m@iitgn.ac.in, rohittifr2011@gmail.com}
\address{Department of Mathematics, IIT Bombay}
\email{suman@math.iitb.ac.in,sumansahootifr@gmail.com}
\address{Department of Mathematics, IIT Bombay}
\email{anamika.purohit@iitgn.ac.in, purohitanamika18@gmail.com}

\begin{document}
	\begin{abstract}
    We study an inverse problem for the time-dependent nonlinear fourth-order Schr\"odinger equation on both compact Euclidean domains and compact Riemannian manifolds, (say) $M$. This model arises in nonlinear fiber optics and the theory of optical solitons in gyrotropic media. Our main objective is the identification of unknown coefficients from the associated source-to-solution map, which assigns to each source term \( f \), supported in \( (0, T)\times \Gamma \), the corresponding solution \( u \) restricted to the same set, where \( \Gamma \subset M \) is a neighborhood of \( \partial M \). We prove that the zeroth-order term, the second-order coefficient, and the nonlinear coefficient are uniquely determined by this map. Moreover, the recovery of the symmetric second-order tensor reduces to the inversion of a divergent beam transform.
    
	\end{abstract}
	\subjclass[2010]{Primary 35R30, 31B20, 31B30, 35J40}
	\subjclass[2020]{Primary 35R30, 35G31, 31B20, 31B30, 35J40}
	\keywords{Calder\'{o}n problem, nonlinear fourth-order Schr\"odinger equation, second-order linearization, geometric optics solutions}
	\maketitle
	\section{Introduction}
	\subsection{Main results.}Let $M$ be a convex and compact subset of $\mathbb{R}^n$ ($n\geq 2$) with a smooth boundary $\partial M$. The set $\Gamma\subset M$ be a neighborhood of $\partial M$ and $T > 0$. Suppose $A:=(A_{ij})_{1\leq i,j \leq n}$ be a symmetric 2-tensor field defined on $(0,T) \times M$ whose components $A_{ij}\in C_c^{\infty}\left((0,T)\times M\setminus\Gamma;\mathbb{R}\right)$, $q\in C_c^{\infty}\left((0,T)\times M\setminus\Gamma;\mathbb{R}\right)$. Let $\beta\in C_c^{\infty}\left((0, T)\times M\setminus\Gamma;\mathbb{R}\right)$ which is non-vanishing almost everywhere in the support of $A$ and $q$. We consider the following initial boundary value problem (IBVP) for a nonlinear fourth-order Schr\"odinger partial differential equation:
	\begin{equation}\label{main_IBVP}
		\begin{aligned}
			\begin{cases}
				\left(\I \partial_t +\Delta^2 -\nabla\cdot(A(t,x)\nabla) +q(t,x)\right)u(t,x)+\beta(t,x) u^2(t,x)=f(t,x) \quad &\mbox{in} \quad (0,T) \times M,\\
				u(t,x)= \frac{\partial u}{\partial \nu}(t,x)  = 0 \quad \mbox{on} \quad (0,T)\times \partial M,\\
				u(0,x)=0 \quad  \hspace{1.7cm} \,\mbox{in} \quad x\in M.
			\end{cases}
		\end{aligned}
	\end{equation}

The inverse problem considered here is the unique recovery of the coefficients $A(t,x)$, $q(t,x)$, and $\beta(t,x)$ from the source-to-solution map $L_{A,q,\beta}$ defined by
\begin{align}\label{eq_sos_map}
	L_{A,q,\beta}f := u|_{(0,T)\times \Gamma},
\end{align}
where $u$ is the unique solution to \eqref{main_IBVP} corresponding to a source term $f$ supported in $(0,T)\times \Gamma$. The existence and uniqueness of $u$ follow from the well-posedness of the initial boundary value problem \eqref{main_IBVP}.
To state the well-posedness of the nonlinear problem \eqref{main_IBVP}, we introduce the relevant function spaces (see, e.g., \cite[Chapter 5]{Evans_Book}). Let $X$ be a Banach space with norm $\|\cdot\|$. For $1 \leq p < \infty$, we define
\begin{align}
    L^p((0,T);X) := \left\{ u : [0,T] \to X \ \text{measurable} \; : \; \int_0^T \|u(t)\|^p \, \df t < \infty \right\},
\end{align}
equipped with the norm
\begin{align}
    \|u\|_{L^p((0,T);X)} = \left( \int_0^T \|u(t)\|^p \, \df t \right)^{1/p}.
\end{align}
  For any non-negative integer $\kappa$, the function space $H^{4\kappa}((0,T)\times \Gamma)$ is defined in the following way
     \begin{align*}
   H^{4\kappa}((0,T)\times \Gamma):=  H^{4\kappa}((0,T);L^2 (\Gamma)) \cap  L^2((0,T); H^{4\kappa} (\Gamma))
   \end{align*}
   with the norm
   \begin{align*}
   \norm{u}_{H^{4\kappa}((0,T)\times \Gamma)}=  \norm{u}_{H^{4\kappa}((0,T);L^2 (\Gamma))}+  \norm{u}_{L^2((0,T); H^{4\kappa} (\Gamma))}.
    \end{align*}
Further, we introduce the function space $H_{00}^{4\kappa}$ by  
	\begin{align}
		H_{00}^{4\kappa}:= \left\{f\in H^{4\kappa}((0,T)\times \Gamma): \partial^m_t f(0,\cdot) =0 \quad \mbox{for \, $0\le m \le 4\kappa-1$} \right\}.
	\end{align}
    Let $\delta>0$ be sufficiently small. For $\kappa > \frac{n+1}{2}$, define
\begin{align*}
\mathcal{H}
:= \left\{ f \in H_{00}^{4\kappa} \;:\;
\operatorname{supp} f \subset (0,T)\times \Gamma,\;
\|f\|_{H^{4\kappa}((0,T)\times \Gamma)} < \delta \right\}.
\end{align*}
With these definitions, the initial boundary value problem \eqref{main_IBVP} is well posed in the sense that, for each $f \in \mathcal{H}$, there exists a unique solution $u \in H_{00}^{4\kappa}$ depending smoothly on $f$. In particular, the source-to-solution map \eqref{eq_sos_map} is well defined and smooth. The details of this well-posedness result are provided in Section \ref{Well_posedness}. We now proceed to state our first main result.
    
	\begin{theorem}\label{th: Eulidean}
		Let $T>0$, $M\subset \mathbb{R}^n, n\geq 2$ be a convex and compact set with a smooth boundary $\partial M$. Let $\Gamma\subset M$ be a neighborhood of $\partial M$. 
        Furthermore, we assume $\beta^{(k)}$ is non-vanishing almost everywhere in the support of $A^{(k)}$ and $q^{(k)}$ for $k=1,2$, then one has 
		\begin{align}
			L_{A^{(1)},q^{(1)},\beta^{(1)}}f= L_{A^{(2)},q^{(2)},\beta^{(2)}}f, \quad \forall\ f\in \mathcal{H} \implies  (A^{(1)},q^{(1)},\beta^{(1)})=(A^{(2)},q^{(2)},\beta^{(2)}) \ \mbox{in} \ (0,T) \times M. 
		\end{align}
	\end{theorem}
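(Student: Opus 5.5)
We will use a second-order linearization of the source-to-solution map together with geometric optics solutions for the linear fourth-order operator. Take $f=\varepsilon_1 f_1+\varepsilon_2 f_2$ with $f_j$ supported in $(0,T)\times\Gamma$, and let $u^{(k)}_\varepsilon$ be the corresponding solutions. By the smooth dependence in the well-posedness result we may differentiate in $\varepsilon$.

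\emph{First and second linearizations.} The first derivative $v^{(k)}_j=\partial_{\varepsilon_j}u^{(k)}_\varepsilon|_{\varepsilon=0}$ solves the linear problem
\[
P_k v:=\bigl(\I\partial_t+\Delta^2-\nabla\cdot(A^{(k)}\nabla)+q^{(k)}\bigr)v=f_j
\]
with the same boundary and initial conditions. Hence $L_{A^{(1)},q^{(1)},\beta^{(1)}}=L_{A^{(2)},q^{(2)},\beta^{(2)}}$ gives equality of the linear source-to-solution maps on $(0,T)\times\Gamma$.

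Since all coefficients vanish in $\Gamma$, unique continuation for the bi-Laplacian evolution (or a Runge-type argument) lets us set $w=v^{(1)}_j-v^{(2)}_j$. This function satisfies $w=\partial_\nu w=0$ on the boundary and $w=0$ on $(0,T)\times\Gamma$. Pairing with solutions $\tilde v$ of the adjoint problem $P_1^*\tilde v=0$ (backward in time, $\tilde v(T)=0$), we obtain the integral identity
\[
\int_0^T\!\!\int_M\Bigl((A^{(1)}-A^{(2)})\nabla v^{(2)}\cdot\nabla\overline{\tilde v}+(q^{(1)}-q^{(2)})v^{(2)}\overline{\tilde v}\Bigr)\,\df x\,\df t=0 .
\]
Boundary terms vanish because all of $v^{(2)}$, $w$ and $\tilde v$ satisfy the boundary conditions, and $w$ vanishes near $\partial M$.

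The mixed derivative $w^{(k)}=\partial_{\varepsilon_1}\partial_{\varepsilon_2}u^{(k)}|_0$ solves $P_k w^{(k)}=-2\beta^{(k)}v^{(k)}_1v^{(k)}_2$ with zero source in $\Gamma$. Testing against an adjoint solution $v_0$ of $P_k^*v_0=0$ gives
\[
\int\beta^{(1)}v_1^{(1)}v_2^{(1)}\overline{v_0}=\int\beta^{(2)}v_1^{(2)}v_2^{(2)}\overline{v_0}.
\]

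\emph{Geometric optics and recovery.} Next we construct solutions of $P_kv=0$ in $(0,T)\times M$. Their sources can be placed in $\Gamma$ by extending $M$ slightly and using a cutoff, since the coefficients vanish in $\Gamma$. We use the ansatz
\[
v=e^{\I(\rho x\cdot\omega-\rho^4 t)}\bigl(a_0+\rho^{-1}a_1+\cdots\bigr)+R_\rho .
\]
The phase solves the dispersion relation $\I\partial_t+\Delta^2\mapsto \rho^4-\rho^4=0$. Collecting powers, the $\rho^3$ term gives the transport equation $4\I(\omega\cdot\nabla)a_0=0$. The second-order term $-\nabla\cdot A\nabla$ first appears at order $\rho^2$ as $\rho^2 (A\omega\cdot\omega)a_0$, and so enters the equation for $a_1$. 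It is therefore better to build $A$ into the amplitude through $a_0=\exp\bigl(c\,\rho^{-1}\int (A\omega\cdot\omega)\bigr)\chi$, or to keep it in the next-order transport equation. The remainder is controlled by an energy estimate for $P_k$, giving $\|R_\rho\|=O(\rho^{-N})$ in suitable norms.

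Inserting two such solutions with opposite phase directions into the linear identity gives the following. The leading $\rho^2$ term yields
\[
\int_0^T\!\!\int (A^{(1)}-A^{(2)})\omega\cdot\omega\,\chi\,\df x\,\df t=0
\]
for cutoffs $\chi$ constant along lines in direction $\omega$ and arbitrary in $t$. Localizing in $t$, and using free transverse profiles together with exponentials $e^{\I\rho x\cdot\xi}$ with $\xi\perp\omega$, this gives vanishing of the ray transform of $(A^{(1)}-A^{(2)})(t,\cdot)\omega\cdot\omega$ along all lines, that is, a (divergent beam/X-ray) transform of the symmetric tensor. Inverting it recovers $A$ up to potential fields $d\phi$ (symmetrized gradients); compact support then kills the gauge on $(0,T)\times M$ because a $d^s\phi$ with compact support in a convex set forces $\phi=0$. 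With $A^{(1)}=A^{(2)}$, the $\rho^0$ term gives $\int (q^{(1)}-q^{(2)})e^{\I x\cdot\xi}\chi=0$, hence $q^{(1)}=q^{(2)}$.

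Finally, with the linear parts equal we have $v^{(1)}_j=v^{(2)}_j$. The second-order identity then reads $\int(\beta^{(1)}-\beta^{(2)})v_1v_2\overline{v_0}=0$. We choose three geometric optics solutions with $\rho_1^4+\rho_2^4=\rho_0^4$-compatible time phases. Alternatively, and more simply, we take $v_1$ and $v_0$ with opposite phases at scale $\rho$ and $v_2$ a fixed low-frequency solution. Using density of such products, we conclude $\beta^{(1)}=\beta^{(2)}$.

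\emph{Main obstacle.} The hardest step is the tensor recovery. The time oscillation $e^{-\I\rho^4t}$ cancels in the products, so we only get spatial ray transforms for each fixed $t$. The $\nabla\cdot A\nabla$ term is subleading relative to $\Delta^2$, so the GO expansion must be carried carefully to order $\rho^2$ with remainder $o(\rho^{-2})$, and then the gauge-free injectivity of the resulting transform on symmetric 2-tensors must be justified. We would first try making the amplitude carry the $\rho^{-2}$ corrections explicitly and appealing to known injectivity results for the longitudinal ray transform modulo potentials.
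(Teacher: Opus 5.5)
Your recovery of $q$ and $\beta$ is reasonable once $A^{(1)}=A^{(2)}$ is known. The step that recovers the tensor, however, has a genuine gap.

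The leading $\rho^2$ term of your linear identity gives only the full-line longitudinal ray transform, $\int_{\mathbb{R}}\langle (A^{(1)}-A^{(2)})(t,x+s\omega),\omega^{\odot 2}\rangle\,\df s=0$ for all lines. The kernel of this transform on compactly supported symmetric $2$-tensors consists of the potential fields $d^s\phi$ with $\phi$ compactly supported. Compact support does \emph{not} force $\phi=0$. For any real $\psi\in C_c^\infty((0,T)\times M\setminus\Gamma)$, the Hessian $\nabla_x^2\psi$ is a nonzero, smooth, compactly supported symmetric tensor in the admissible class. It satisfies $\int_{\mathbb{R}}\omega\cdot\nabla_x^2\psi(t,x+s\omega)\,\omega\,\df s=\int_{\mathbb{R}}\partial_s^2\bigl[\psi(t,x+s\omega)\bigr]\,\df s=0$. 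So the information you extract cannot distinguish $A^{(1)}$ from $A^{(1)}+\nabla_x^2\psi$. Your later steps inherit this gap, since the $q$ step uses $A^{(1)}=A^{(2)}$ and the $\beta$ step uses $v^{(1)}_j=v^{(2)}_j$. Appealing to injectivity ``modulo potentials'' is exactly what does not suffice. Breaking this gauge from the linearized map alone would need a new argument at subleading orders, which you do not supply.

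The missing idea is the point localization that the nonlinearity provides. The paper never uses the first linearization to recover $A$. It works with the second-order identity $\int\beta\,U_1U_2\overline{U_0}$ for three GO solutions along lines through a fixed point $p$, with $\xi_0=\xi_1+\xi_2$ and $|\xi_0|^4=|\xi_1|^4+|\xi_2|^4$. With this choice both the space and time phases cancel, and the integrand concentrates at $p$.

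\begin{itemize}
\item The $\tau^0$ coefficient gives $\beta(p)$.
\item In the $\tau^{-1}$ coefficient, $a^{(j)}_1(p)$ contains the integral of $\xi_j\cdot A\xi_j$ only from the boundary point $r_j$ up to $p$. Letting $|\xi_2|=\lambda\to0$ isolates $j=2$ and yields the half-line (divergent beam) transform $\int_0^\infty\langle A(p-s\xi),\xi^{\odot2}\rangle\,\df s=0$ for all $p$ and unit $\xi$.
\item Differentiating in $p$ along $\xi$ gives $\langle A(p),\xi^{\odot2}\rangle=0$, hence $A(p)=0$.
\end{itemize}

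This transform does detect potential fields: for $\nabla_x^2\psi$ it equals $\xi\cdot\nabla_x\psi(p)$.

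Two minor points. For $\I\partial_t+\Delta^2$ the phase must be $e^{\I(\rho x\cdot\omega+\rho^4t)}$; your $-\rho^4t$ produces $2\rho^4$ rather than $0$. Also, $v$ and $\tilde v$ should carry the same phase, not opposite ones, so that $v\overline{\tilde v}$ is non-oscillatory.
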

\noindent 
Our second main result is formulated in a Riemannian geometric setting. To this end, we introduce the class of Riemannian manifolds, referred to as admissible manifolds, on which the result is established. The following definition coincides with that in \cite{LOSST_NLS_2025}.
Throughout, let $(M,g)$ denotes a Riemannian manifold, and $T_pM$ its tangent space at a point $p \in M$. For $\xi \in T_pM$, we denote by $\gamma_{p,\xi}$ the geodesic issued from $p$ in the direction $\xi$. We assume that each $\gamma_{p,\xi}$ is non-trapping, non-tangential, and does not self-intersect at $p \in M$.

\begin{definition}[\cite{LOSST_NLS_2025}]\label{def_admissible}
We say that a Riemannian manifold $(M, g)$ is admissible if every point $p\in M$ is generated by admissible geodesics. A point $p\in M$ is said to be generated by admissible geodesics if there exist unit tangent vectors $\xi_1, \xi_2 \in T_pM$ with $\lr\xi_1,\xi_2 \rn_g = 0$ such that 
$$\gamma_{p,\xi_\lambda} \cap \gamma_{p,\xi_1} \cap \gamma_{p,\xi_2} = \{p\},$$
for any $ \displaystyle \xi_\lambda = \lambda'\xi_1 + \lambda\xi_2$, where $\lambda > 0$ (small real number) and $\lambda'^4 = 1-\lambda^4$.
\end{definition}
\noindent One may check that any compact domain in $\mathbb{R}^n$ and simple manifolds are examples of admissible manifolds. For more details on this, we refer the author to check the discussion given in \cite[Definition 1.1]{LOSST_NLS_2025}.
However, we note that not every admissible manifold needs to be a simple manifold or compact domain in $\mathbb{R}^n$. For example, given any finite interval \( I \), the cylinder \( M = I \times \mathbb{S}^1 \) is admissible, but it has trapped geodesics and conjugate points; see, for instance, \cite{PSU_book}. 

We now introduce a coordinate-invariant nonlinear fourth-order Schr\"odinger operator on manifolds. Let $T>0$ and $(M,g)$ be a compact Riemannian manifold with $\Gamma \subset M$ be a neighbourhood of $\p M$. For a positive definite symmetric metric $h:=(h_{ij})_{1\leq i,j \leq n}$ with $h_{ij}\in C_c^{\infty}\left((0,T)\times M\setminus\Gamma\right)$, potential $q\in C^\infty_0((0,T)\times M\setminus\Gamma)$, and a coupling coefficient $\beta\in C^\infty_0((0,T)\times M\setminus\Gamma)$ such that $\beta$ is non-zero almost everywhere in $\supp(h)$ and $\supp(q)$. In this setting, we consider the following initial-boundary value problem (IBVP) on $(0, T) \times M$:
	\begin{align}\label{IBVP_manifold}
		\begin{cases}
			(\I \p_t + \Delta^2_g + \Delta_h + q)u + \beta u^2= f & \text{in $(0,T) \times M$,}
			\\
			u = \p_{\nu_g} u=0 \qum{on} \quad  (0,T)\times \p M,
			\\
			u(0,x)= 0 \hspace{4mm} \qum{in $\Omega$}.
		\end{cases}
	\end{align}
\begin{theorem}\label{th:Manifold}
		Let $(M, g)$ be an admissible manifold and $\Gamma$ be a neighborhood of $\partial M$. 
     Suppose that $\beta_k$ is non-zero everywhere in the support of $q_k$ and $h_{k}$ for $k=1,2$. Then one has 
		\begin{align}
			L_{h_{1},q_1,\beta_1}f= L_{h_{2},q_2,\beta_2}f \quad \forall  f\in \mathcal{H}  \implies (h_{1},q_1,\beta_1)=(h_{2},q_2,\beta_2) \quad \mbox{in} \ (0,T) \times M.
		\end{align}
	\end{theorem}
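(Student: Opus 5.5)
We argue by second-order (multiple $\epsilon$) linearization of the source-to-solution map, and then use integral identities tested against Gaussian-beam (geometric optics) solutions concentrated along the admissible geodesics of Definition~\ref{def_admissible}. For sources $f=\epsilon_1 f_1+\epsilon_2 f_2$ with $f_j\in\mathcal H$ and small $\epsilon_j$, the well-posedness result of Section~\ref{Well_posedness} gives smooth dependence $u=u(\epsilon_1,\epsilon_2)$. Differentiating at $\epsilon=0$, the first derivatives $v_j=\partial_{\epsilon_j}u|_{\epsilon=0}$ solve the linear problem
\begin{equation*}
P_{h,q}v_j:=(\I\partial_t+\Delta_g^2+\Delta_h+q)v_j=f_j,
\end{equation*}
with the same boundary and initial data. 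The mixed derivative $w=\partial^2_{\epsilon_1\epsilon_2}u|_{\epsilon=0}$ solves $P_{h,q}w=-2\beta v_1v_2$. Equality of the maps gives $v_j^{(1)}=v_j^{(2)}$ and $w^{(1)}=w^{(2)}$ on $(0,T)\times\Gamma$.

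\textbf{Step 1: recovery of $\beta$.} In $\Gamma$ the coefficients vanish, so there the operators $P^{(1)}$ and $P^{(2)}$ coincide with $\I\partial_t+\Delta_g^2$. Since $f_j$ is supported in $\Gamma$ and the data coincide there, a unique continuation argument for the linear equation gives $v_j^{(1)}=v_j^{(2)}$ only near $\Gamma$, and not globally. So I would proceed by duality instead. Take $v_0$ solving the adjoint (backward) equation $P^{*}_{h,q}v_0=f_0$ with $f_0$ supported in $\Gamma$ and vanishing final data. Integrating by parts, we get $\int_0^T\int_M\beta v_1v_2\overline{v_0}$ expressed through the source-to-solution data, since the boundary terms vanish and the sources live in $\Gamma$. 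This gives an identity of the form $\int(\beta_1v^{(1)}_1v^{(1)}_2\overline{v^{(1)}_0}-\beta_2 v^{(2)}_1v^{(2)}_2\overline{v^{(2)}_0})=0$. The obstacle is that the $v$'s differ between the two systems.

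To decouple, I would first recover the linear part. The first-order data determine the linear source-to-solution map $f\mapsto v|_\Gamma$ of $P_{h,q}$. From it, Step 2 below recovers $h$ and $q$ \emph{before} $\beta$, and then the identity with $v^{(1)}=v^{(2)}$ reads $\int(\beta_1-\beta_2)v_1v_2\overline{v_0}=0$. We extend $M$ slightly and use a Runge-type density, so that solutions in $M$ can be approximated by solutions generated by sources in $\Gamma$. We then insert three Gaussian beams $v_1,v_2,v_0$ concentrated on geodesics $\gamma_{p,\xi_1},\gamma_{p,\xi_2},\gamma_{p,\xi_\lambda}$ which meet only at $p$. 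For the fourth-order operator, the phase and frequency relations suggest amplitudes with frequencies $\tau\xi$ and time frequency $\tau^4$. The condition $\lambda'^4+\lambda^4=1$ in Definition~\ref{def_admissible} is exactly what makes the time phases cancel in $v_1v_2\overline{v_0}$ (frequencies $\tau\lambda',\tau\lambda$ and $\tau$). The product then localizes at $p$ as $\tau\to\infty$, giving $(\beta_1-\beta_2)(t,p)=0$ for all $t$. If the linear part alone were not accessible, the non-vanishing of $\beta$ would be used here instead; that is the hypothesis in the theorem, which suggests the authors in fact use the nonlinearity to recover $h,q$. I would keep this as a fallback: write the difference identity with $w$ and use the non-vanishing of $\beta$ to divide.

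\textbf{Step 2: recovery of $h$ and $q$ from the linear map.} From $v^{(1)}=v^{(2)}$ on $\Gamma$ we get the Alessandrini-type identity
\begin{equation*}
\int_0^T\int_M\big((\Delta_{h_1}-\Delta_{h_2})v^{(1)}+(q_1-q_2)v^{(1)}\big)\overline{v_0}=0,
\end{equation*}
for solutions $v^{(1)}$ of $P^{(1)}$ and $v_0$ of the adjoint of $P^{(2)}$ (again via density). We insert Gaussian beams with large parameter $\tau$ along a geodesic $\gamma$ with opposite-matching phases. At leading order $\tau^2$, the identity gives $\int_\gamma (h_1-h_2)(\dot\gamma,\dot\gamma)\,a\,\df s=0$ for a family of weights $a$ coming from the amplitudes. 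In the Euclidean case, varying the base point of the beam through the free time variable gives a divergent beam (X-ray type) transform of the symmetric tensor $h_1-h_2$, whose injectivity (up to potential fields, which are excluded by compact support and a time-frequency argument) yields $h_1=h_2$. This is the step I expect to be the main obstacle, specifically on general admissible manifolds, where tensor ray transforms are not injective in general. I would try first to avoid the ray transform: use three-beam products from the second-order linearization, so that the coefficient is localized at the intersection point $p$ as in Step 1. The second-order terms $\nabla\cdot(h\nabla)$ then produce $(h_1-h_2)(\xi,\eta)(t,p)$ for the pairs among $\xi_1,\xi_2,\xi_\lambda$. Polarization in $\lambda$ recovers all components at $p$, and the non-vanishing of $\beta$ is needed to divide. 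Once $h_1=h_2$, the order $\tau^0$ term gives the same localization for $q_1-q_2$, hence $q_1=q_2$. Step 1 then finishes the proof.
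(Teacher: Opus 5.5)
Your second-order linearization, the duality identity and the three-beam localization at $p$ match the paper's setup. There is, however, a genuine gap in the recovery of $h$ and $q$, and as organized the argument is circular.

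Your main route through the linear map does not give the theorem. On an admissible manifold the leading-order identity only yields the geodesic ray transform of $(h_1-h_2)(t,\cdot)$, which is not known to be injective there. Even in $\mathbb{R}^n$, compact support does not remove its kernel: $\mathrm{d}^s v$ with $v$ compactly supported is itself compactly supported and integrates to zero along every line.

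Your fallback misplaces where $h$ enters. The coefficients $h,q$ do not occur in the nonlinearity. So in the identity $\int\big(\beta_1U^{(1)}_1U^{(1)}_2\overline{U^{(1)}_0}-\beta_2U^{(2)}_1U^{(2)}_2\overline{U^{(2)}_0}\big)=0$ they appear only through the lower-order amplitudes of each beam, which solve transport equations along that beam's own geodesic. Thus $a_{1,0}$ contains a weighted integral of $h^{11}$ from the entry point up to the current point, and $q$ first enters $a_3$ through $P_{h,q}a_0$. Stationary phase at $p$ therefore produces a weighted sum of truncated ray integrals along the three geodesics, not pointwise values $(h_1-h_2)(\xi,\eta)(t,p)$. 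There are no mixed terms among $\xi_1,\xi_2,\xi_\lambda$ for polarization to act on, and $q$ appears only at order $\tau^{-3}$, while the $\tau^{0}$ term sees only $\beta$. Moreover, nothing can be divided out at order $\tau^{-1}$ unless $\beta_1(p)=\beta_2(p)$ is already known, whereas your Step 1 produces $\beta$ only after $h$ and $q$.

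The missing ideas, which are how the paper proceeds, are these.

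\textbf{Recover $\beta$ first.} The principal amplitudes solve coefficient-free transport equations, so $a_0^{(j)}=b_0^{(j)}$. Lemma~\ref{lm:determining_amplitudes} guarantees that the same sources in $\Gamma$ generate the beams for both coefficient sets. No Runge argument is needed, since $P_{h,q}(\eta u)=[P_{h,q},\eta]u$ is supported in $\Gamma$ when $\eta=1$ on $M\setminus\Gamma$. The leading term $I_0$ then gives $\beta_1(p)=\beta_2(p)$ directly; the discrepancy between the two systems only matters at lower orders.

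\textbf{Isolate one beam and differentiate in the endpoint.} With $\beta(p)\neq0$ known, the $\tau^{-1}$ term is a combination in which the correction of the $\lambda\tau$-beam carries the factor $\lambda^{-1}$. Multiplying by $\lambda$ and letting $\lambda\to0$ isolates $(a_1^{(2)}-b_1^{(2)})(p)$. After removing the coefficient-free part, this is a nonzero multiple of $\int_{r_0}^{r_1}(h_1^{11}-h_2^{11})\,\df s$ along $\gamma_{p,\xi_2}$ from the boundary to $p=\gamma(r_1)$. Since $p$ is arbitrary, these truncated integrals vanish for every endpoint, and differentiating in the endpoint gives $h_1^{11}(p)=h_2^{11}(p)$. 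No injectivity of a ray transform is used; in the Euclidean section this is the divergent beam transform, inverted by applying $\xi\cdot\nabla_p$. Polarization enters only when the direction of this isolated beam through $p$ is varied (the paper uses $I_2$ and the tilted directions $e'_k$). The potential $q$ comes from $I_3$ after multiplying by $\lambda^3$, in the same way.

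\textbf{A smaller point, inherited from Definition~\ref{def_admissible} as stated.} With $\xi_1\perp\xi_2$ unit and $\lambda'^4+\lambda^4=1$, the time phases do cancel. But $|\lambda'\xi_1+\lambda\xi_2|^2=\lambda'^2+\lambda^2>1$, so the spatial phase is not stationary at $p$. The directions must be taken at a suitable $\lambda$-dependent angle, as in the paper's Euclidean choice $\xi_0=\xi_1+\xi_2$ with $|\xi_0|^4=1$, $|\xi_1|^4=1-\lambda^4$, $|\xi_2|^4=\lambda^4$.
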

\noindent To the best of our knowledge, this is the first work in which the unique recovery of a symmetric two-tensor is achieved via a higher-order linearization method, where the second-order term corresponds to a linear perturbation of the operator $i\partial_t + \Delta^2$. Motivated by \cite{LOSST_NLS_2025}, we study an inverse problem for a time-dependent nonlinear fourth-order Schr\"odinger equation posed on smooth, bounded domains in $\mathbb{R}^n$, as well as on compact manifolds. We work in the \emph{source-to-solution} map framework, which, in many settings, is equivalent to the more commonly used Dirichlet-to-Neumann map formulation, and either viewpoint can be adopted. The equation \eqref{main_IBVP} appears naturally in a range of physical models, including the theory of thin elastic plates, nonlinear fiber optics, and the propagation of optical solitons in gyrotropic media; see \cites{KARPMAN2000194,Karpman1995PRL,KARPMAN1994355} for background.

We briefly outline the method of proof. The argument relies on a second-order linearization, which naturally produces terms involving products of three solutions to the associated linearized equation $ \left(\I \partial_t +\Delta^2 -\nabla\cdot(A(t,x)\nabla) +q(t,x)\right)(\cdot) =0$. This creates an additional layer of difficulty, since one must construct suitable approximate geometric optics (GO) solutions for this higher-order operator; we refer to Section \ref{e_s3} for the details. 
The key idea, in the spirit of \cite{LOSST_NLS_2025}, is to carefully combine three such GO solutions (or quasimodes in the manifold setting) at the level of second-order linearization. Each of these solutions is concentrated along a single ray (or geodesic), and by appropriately choosing them, we obtain an integral identity that is effectively localized near a single point. In this way, although three different rays are involved, their interaction is concentrated near a common point, say $p$. By exploiting homogeneity and employing an averaging argument, the problem reduces to a  \emph{divergent beam transform} along a single ray, which is known to be injective on tensor fields, see for instance \cite{kuchment_cone_trans}. To the best of our knowledge, this is the first instance in which a symmetric two-tensor field is uniquely recovered via a higher-order linearization method in which the second-order term remains linear. The same general strategy extends to the manifold setting, although the analysis becomes more delicate; see Section \ref{sec:geometric case} for a detailed treatment.
\subsection{State of the art.}	
 The problem of determining coefficients in the Schr\"odinger equation with time-dependent coefficients has been a central topic of research over the past few decades. The inverse problem considered in this work can be viewed as a natural extension of Calder\'on's inverse conductivity problem \cite{Calderon_problem}.  In his seminal work \cite{Calderon_problem}, Calder\'on formulated the inverse problem of recovering the conductivity $\gamma$ in the equation $\nabla \cdot (\gamma \nabla u) = 0 \quad \text{in } \Omega, \  u = f  \text{ on } \partial \Omega,$ from the knowledge of the Dirichlet-to-Neumann map (also known as the DN map)
$\gamma \mapsto \Lambda_{\gamma}:=\gamma \, \partial_{\nu} u \big|_{\partial \Omega}.$
He established uniqueness for the linearized problem near constant conductivities. Subsequently, in another pioneer work \cite{sylvester1987global}, Sylvester and Uhlmann solved Calder\'on's problem for smooth conductivities by reducing the conductivity equation to a Schr\"odinger equation of the form $(-\Delta + q)u = 0,$ assuming $\gamma \in C^2(\overline{\Omega})$. For further classical results, we refer to the survey \cite{Uhl_survey} and the references cited therein.

The study of inverse problems for determining time-dependent potentials in the linear Schr\"odinger equation has attracted considerable attention from many authors. A classical result goes back to the work of Eskin \cite{Eskin_JMP}, who first proved the unique recovery of a time-dependent electromagnetic potential from the Dirichlet-to-Neumann map. Building on this result, several stability estimates have been obtained. These include logarithmic stability results by A\"{\i}cha \cite{Ibtissem_dynamical_Schroedinger} and Choulli et al. \cite{CKS_SIAM}, as well as H\"older-type stability estimates by Kian and Soccorsi \cite{Kian_Soccorsi_Holder_stability_Scrodinger}. Further developments include the work of Kian and Tetlow \cite{Kian_Tetlow_Holder_stability_dynamical_Scrodinger}, who proved H\"older-type stability for the dynamical anisotropic Schr\"odinger equation, and Bellassoued and Fraj \cite{Bellassoued_Ben_IPI}, who established partial data recovery results with logarithmic and double-logarithmic stability. All the aforementioned results fall into the category of overdetermined problems, which require repeated observations. We also refer to the work \cite{Baudouin_schrodinger}, where it is shown that one can recover the potential from boundary measurements without repeated observations, under suitable geometric assumptions related to exact controllability. Finally, we mention \cites{Klibanov,Rakesh_Salo_IP,Rakesh_Salo_Siam,BansalKrishnanPattar} for further related results.


The literature on inverse problems related to the higher order Schr\"odinger operator is very limited. The inverse problems for biharmonic and polyharmonic operators  (stationary case) were initiated in the works \cites{KRU1,KRU2}. After that, several authors have investigated the inverse problem for biharmonic and polyharmonic operators. In some recent works \cites{BKS_MRT_polyharmonic,SS_linearized,BKSU_biharmonic_nonlinear} the authors introduced a generalized momentum ray transform (MRT) techniques to deal with inverse problems for  linear and nonlinear (both) polyharmonic operators.  
We refer to the related works where the authors have studied various inverse problems for biharmonic and polyharmonic operators   \cites{
BG_JFAA,BG_MAA,GK_applicable_analysis,Bhattacharyya-Kumar,KMS_fractional_nonlinear,AJS_bihar_nonlinear} and the references cited therein.

In recent years, inverse problems for nonlinear PDEs have attracted significant attention, leading to a substantial and growing body of literature. This line of research can be traced back to the foundational work of Krylov, Lassas, and Uhlmann \cite{KLU_invention}, where an inverse problem for a nonlinear wave equation was solved using higher-order linearization and the nonlinear interaction of non-smooth plane waves.
Related approaches appear in \cites{FO_jde,FO_semilinear_elliptic,Hintz_survey,Uhl_Zhai_math_annalen}, where quasimode constructions and microlocal techniques are employed to study nonlinear inverse problems. An alternative perspective was developed by Stefanov at el. \cites{Barreto_Stefanov_nonlinear,Nikolas_Stefanov_nonlinear}, who introduced nonlinear geometric optics methods to address inverse problems for wave equations.
The higher order linearization method for elliptic PDEs was introduced in the work \cite{LLLS_JMPA}. Subsequently, this method was extended in the works \cites{LLLS_Revista,LLS_Math_ann,LLST_fractional_power,FO_jde,Kruchyk_Uhl,Krupchyk_Uhlmann_magnetic,Kian_Kru_Uhlmann,lai2024partialdatainverseproblems,Lai_Zhou}. In all the above works, the authors have considered a nonlinearity $a(x,u)$ under some analytic assumption in the $u$ variable, for example  $ a(x,u)=q(x) u^{k}$, $k\ge 2$.
The study of inverse problems involving nonlinearities via first-order linearization dates back to the work of Isakov \cite{isakov1993uniqueness} for parabolic equations. In \cite{isakov1993uniqueness}, nonlinear inverse problems for parabolic equations were investigated under the assumptions that $\partial_u a(x,u) > 0$ and $a(x,0) = 0$. The condition $a(x,0)=0$ ensures that $u=0$ is a common solution to the nonlinear equations under consideration. Removing this assumption in one of the equations leads to a natural gauge invariance in the recovery of the nonlinear term; see \cite{Sun_EJDE} for further discussion.
In a recent work \cite{JNS2023}, both assumptions $\partial_u a(x,u) > 0$ and $a(x,0)=0$ were removed, and the authors established recovery of a general nonlinear term up to a natural gauge. This approach has been further extended in \cites{JNS_low_regu,nurminen_sahoo}. Another important class of nonlinear inverse problems arises in the study of porous medium equations, where the nonlinearity appears in the highest-order term (e.g., equations of the form $\partial_t u + \Delta(u^m) + \cdots = 0$); see, for instance, \cites{Catalin_Ghosh_Uhlmann_poros_media,Catalin_Ghosh_Nakamura_aniso_poros_media}. More broadly, there is a substantial body of literature on inverse problems for nonlinear PDEs; we refer to \cites{Isakov_Sylvester_CPAM,Isakov_Nachman_TAMS,Janne_ms_nonlinearity,Janne_ms_nonlinear_ana,LLPT_apde,Kian_forum,FATY_ann_pde,CLLO_jde,Kian_Uhl_arma,Lai_arma} and the references therein for further developments. 
The rest of the article is organized as follows. Section \ref{Well_posedness} establishes the well-posedness of the IBVP associated with the considered nonlinear partial differential equation, which is used to ensure that the source-to-solution map is well-defined. In Section \ref{e_s3}, we construct geometric optics solutions to the linear fourth-order Schr\"odinger equation. Section \ref{main_theorem} contains the proof of the main theorem. Section \ref{sec:geometric case} deals with the Gaussian beam construction and related results. Finally, we present the proof of Theorem \ref{th:Manifold} in Section \ref{sec:main_result_thm_1.2}.
\vspace{-3mm}
\section{Well-posedness for nonlinear equations and the source-to-solution map} \label{Well_posedness}
\noindent This section aims to show that the source-to-solution map, corresponding to the IBVP for the nonlinear fourth-order Schr\"odinger PDE \eqref{main_IBVP}, is well-defined and smooth in a neighborhood around zero. More precisely, we prove the following:
\begin{theorem}[Well-posedness]\label{e_prop}
Let $A,q,\beta$ be as in equation \eqref{main_IBVP}. For every given $f\in\mathcal{H}$, there exists a unique solution $u\in H^{4\kappa}_{00}$ satisfying $u=L_{A,q,\beta}f$. Moreover, the operator $L_{A,q,\beta}:\mathcal{H}\rightarrow H^{4\kappa}_{00}$ is smooth.
\end{theorem}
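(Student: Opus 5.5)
We will argue via the implicit function theorem applied to the map $F(u,f) := \left(\I \partial_t +\Delta^2 -\nabla\cdot(A\nabla) +q\right)u+\beta u^2-f$, viewed between suitable Banach spaces near the origin.

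\textbf{Step 1: the linear problem.} We first study the linear problem
\begin{equation*}
P u := \left(\I \partial_t +\Delta^2 -\nabla\cdot(A\nabla) +q\right)u = F \ \text{in } (0,T)\times M, \qquad u=\partial_\nu u=0 \ \text{on } (0,T)\times\partial M, \qquad u(0,\cdot)=0.
\end{equation*}
Let $\mathcal{A} := \Delta^2 -\nabla\cdot(A\nabla)+q$ on $D(\mathcal{A}) = H^4(M)\cap H^2_0(M)$. For fixed $t$, $\mathcal{A}$ is a self-adjoint perturbation of the clamped bilaplacian, because $A$ and $q$ are real and smooth. Since $A$ and $q$ are time dependent, we will not use a single semigroup. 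Instead we use the Galerkin scheme of \cite[Chapter 7]{Evans_Book}: the operator $-\I\Delta^2$ has a skew-adjoint principal part, and the lower-order terms are bounded from $H^2_0$ to $H^{-2}$. This gives an energy estimate for $u\in C([0,T];L^2)$ with $\partial_t u\in L^2(H^{-2})$, via $\frac{d}{dt}\|u\|_{L^2}^2 \le C\|u\|^2_{L^2}+\|F\|^2_{L^2}$. The lower-order imaginary parts vanish because $A$ and $q$ are real.

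We then raise regularity by differentiating in time. The function $w=\partial_t^m u$ solves $Pw=\partial_t^m F$ plus commutator terms involving $\partial_t^j A$ and $\partial_t^j q$, with $w(0)=0$. This initial condition holds precisely because $\partial_t^m F(0)=0$ for $m\le 4\kappa-1$, which is where the compatibility conditions built into $H^{4\kappa}_{00}$ enter. Spatial regularity then comes from elliptic regularity for the clamped bilaplacian: $\mathcal{A}w = F - \I\partial_t w$ yields four spatial derivatives per time derivative traded. Inducting on $\kappa$, we obtain a bounded solution operator $P^{-1}: H^{4\kappa}_{00}\to H^{4\kappa}_{00}$, now taken on all of $(0,T)\times M$, and uniqueness follows from the $L^2$ energy estimate.

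\textbf{Step 2: algebra property.} Since $\kappa>\frac{n+1}{2}$, we have $4\kappa>\frac{n+1}{2}$, which is the dimension of space-time divided by two. Hence $H^{4\kappa}((0,T)\times M)$ is a Banach algebra. By the Leibniz rule, $\beta u^2$ still has vanishing time derivatives at $t=0$ up to order $4\kappa-1$, so $u\mapsto \beta u^2$ maps $H^{4\kappa}_{00}$ smoothly into itself, with $\|\beta u^2\|\le C\|u\|^2$.

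\textbf{Step 3: implicit function theorem.} We regard
\begin{equation*}
\Phi(u,f) := u + P^{-1}(\beta u^2) - P^{-1}f
\end{equation*}
as a map from $H^{4\kappa}_{00}\times H^{4\kappa}_{00}$ to $H^{4\kappa}_{00}$. This map is smooth, satisfies $\Phi(0,0)=0$, and has $D_u\Phi(0,0)=I$. The implicit function theorem on Banach spaces therefore gives $\delta>0$ and a unique small solution $u=u(f)$ depending smoothly on $f$ for $\|f\|<\delta$. Alternatively, a contraction mapping argument on a small ball gives the same conclusion.

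Composing with the restriction to $(0,T)\times\Gamma$, which is a bounded linear map, shows that $L_{A,q,\beta}$ is smooth. Uniqueness among all solutions in $H^{4\kappa}_{00}$, not only small ones, follows from a Gronwall argument on the difference of two solutions. Here the bound on $\|\beta(u_1+u_2)\|_{L^\infty}$ is supplied by the Sobolev embedding.

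\textbf{Main obstacle.} We expect the main difficulty to be Step 1: higher regularity of the linear fourth-order Schr\"odinger problem with time-dependent coefficients. Because the equation is dispersive rather than parabolic, there is no smoothing. All regularity must come from the compatibility conditions and from trading time derivatives for spatial ones via elliptic estimates for $\Delta^2$ with clamped boundary conditions. The commutator terms must be controlled at each order. We would first try to organize this as an induction on $\kappa$, following the proof of the corresponding result for second-order Schr\"odinger equations in \cite{LOSST_NLS_2025}.
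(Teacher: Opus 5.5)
Your proposal is correct and follows essentially the same route as the paper. The paper also uses Galerkin energy estimates, then time-differentiation with the equation used to trade time derivatives for $\Delta^2$, to obtain $\mathcal{S}:H^{4\kappa}_{00}\to H^{4\kappa}_{00}$ by induction on $\kappa$; it then invokes the algebra property and applies the implicit function theorem to $u-\mathcal{S}(f-\beta u^2)$. Your Gronwall argument for uniqueness, with $\|\beta(u_1+u_2)\|_{L^\infty}$ supplied by Sobolev embedding, is a cleaner justification of a step the paper only asserts via the linear energy estimate.
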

\noindent To this result, we will need the well-posedness of the linearized version of equation \eqref{main_IBVP}. We first address this in the next subsection. 
\subsection{Well-posedness of the linearized equation}
We start with a general inhomogeneous linear fourth-order Schr\"odinger equation
\begin{equation}\label{linear_main_IBVP}
\begin{aligned}
\begin{cases}
\left(\I \partial_t +\Delta^2 +L\right)u(t,x)=f(t,x) \quad &\mbox{in} \quad (0,T) \times M,\\
u(t,x)= \frac{\partial u}{\partial \nu}(t,x) = 0, \quad & \mbox{on} \quad (0,T)\times \partial M,\\
u(0,x)=0 \quad & \,\mbox{in} \quad\, x\in M,
\end{cases}
\end{aligned}
\end{equation}
where $L$ is a second-order elliptic self-adjoint operator with coefficients in $C_c^{\infty}\left((0,T)\times M\setminus\Gamma\right)$. 

\noindent As mentioned before, we aim to show that the initial boundary value problem \eqref{linear_main_IBVP} is well-posed. We achieve this by using the classical Galerkin approximation argument; a similar approach can be found in \cites{Kian_Soccorsi_Holder_stability_Scrodinger, Evans_book_KAM}. For the well-posedness result (see the lemma discussed below), we do not need $C_c^{\infty}\left((0, T)\times M\setminus\Gamma\right)$ regularity of the coefficients; however, for simplicity, we took the coefficients to be $C_c^{\infty}\left((0, T)\times M\setminus\Gamma\right)$.  
\begin{lemma}
\tblue{With notations as in equation \eqref{linear_main_IBVP} and }let $f\in H^1(0, T; L^2(M))$ satisfies $f(0,\cdot)=0$ almost everywhere in $M$. Then there exists a unique solution $v\in \mathcal{C}([0, T]; H^2_0(M)\cap H^4(M))\cap \mathcal{C}^1([0, T]; L^2(M)) $ of the system \eqref{linear_main_IBVP} satisfying the following estimate
\begin{align}\label{stability_estimate}
\|v\|_{\mathcal{C}([0, T]; H^4(M))}+\|v\|_{\mathcal{C}^1([0, T]; L^2(M))}\leq C\|f\|_{H^1(0,T;L^2(M))}.
\end{align}
\end{lemma}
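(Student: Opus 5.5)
The plan is a Galerkin approximation in the eigenbasis of the clamped bilaplacian. Let $\{w_k\}_{k\ge1}\subset H^2_0(M)\cap H^4(M)$ be the $L^2(M)$-orthonormal eigenfunctions of $\Delta^2$ with boundary conditions $w=\partial_\nu w=0$, with eigenvalues $0<\lambda_1\le\lambda_2\le\cdots\to\infty$. For each $m$ seek
\begin{align*}
v_m(t)=\sum_{k=1}^m d_k^m(t)w_k
\end{align*}
solving the projected system
\begin{align*}
\I\,(\partial_t v_m,w_k)+(\Delta v_m,\Delta w_k)+(Lv_m,w_k)=(f,w_k), \qquad d_k^m(0)=0 .
\end{align*}
This is a linear ODE system with smooth time-dependent coefficients, so it has a unique $C^1$ solution on $[0,T]$.

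\textbf{Step 1: energy bounds.} Take the inner product with $\partial_t v_m$ and take imaginary parts. Since $\Delta^2$ and $L$ are self-adjoint, this gives
\begin{align*}
\|\partial_t v_m\|^2=\operatorname{Im}\big(f-Lv_m,\partial_t v_m\big).
\end{align*}
That relation is not closed by itself, so instead I differentiate the Galerkin system in $t$; this is legitimate because $f\in H^1(0,T;L^2)$ and the coefficients of $L$ are smooth in $t$. With $z_m=\partial_t v_m$, the differentiated system is
\begin{align*}
\I\,\partial_t z_m+\Delta^2 z_m+Lz_m=\partial_t f-(\partial_t L)v_m
\end{align*}
in projected form. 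Testing with $z_m$ and taking imaginary parts kills the self-adjoint terms and gives
\begin{align*}
\tfrac{d}{dt}\|z_m\|^2\le C\big(\|\partial_t f\|^2+\|z_m\|^2+\|v_m\|_{H^2}^2\big).
\end{align*}
For the initial value, $z_m(0)$ is the projection of $-\I f(0)$, which vanishes because $v_m(0)=0$ and $f(0)=0$. This is exactly where the compatibility hypothesis $f(0,\cdot)=0$ is used.

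To control $\|v_m\|_{H^2}$, use the undifferentiated equation tested with $v_m$. Real parts give
\begin{align*}
\|\Delta v_m\|^2\le |(Lv_m,v_m)|+\|f\|\,\|v_m\|+\|z_m\|\,\|v_m\| .
\end{align*}
Combine this with the interpolation bound $|(Lv_m,v_m)|\le \varepsilon\|v_m\|_{H^2}^2+C_\varepsilon\|v_m\|^2$ and with $\|v_m(t)\|\le\int_0^t\|z_m\|$. Gronwall then yields, uniformly in $m$,
\begin{align*}
\sup_t\big(\|v_m\|_{H^2}+\|\partial_t v_m\|\big)\le C\|f\|_{H^1(0,T;L^2)} .
\end{align*}

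\textbf{Step 2: $H^4$ regularity.} For each $t$, $\Delta^2 v_m$ lies in the span of the $w_k$, and it equals the projection of $f-\I\partial_t v_m-Lv_m$. Hence $\|\Delta^2 v_m\|\le \|f\|+\|\partial_tv_m\|+C\|v_m\|_{H^2}$. Elliptic regularity for the clamped bilaplacian, $\|w\|_{H^4}\le C\|\Delta^2 w\|$ for $w\in H^2_0\cap H^4$, then bounds $\sup_t\|v_m\|_{H^4}$. Here I use $\|f\|_{C([0,T];L^2)}\lesssim\|f\|_{H^1(0,T;L^2)}$.

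\textbf{Step 3: passage to the limit and uniqueness.} Extract weak-$*$ limits in $L^\infty(0,T;H^4\cap H^2_0)$ with $\partial_t v\in L^\infty(0,T;L^2)$, and pass to the limit in the weak formulation. Continuity in time follows from the equation:
\begin{itemize}
\item $\partial_t v=\I(f-\Delta^2v-Lv)$, and one upgrades $L^\infty$ to $C$ in time by the standard Lions–Magenes argument, or by showing the Galerkin sequence is Cauchy using the same energy estimate applied to differences $v_m-v_{m'}$;
\item the estimate \eqref{stability_estimate} passes to the limit by lower semicontinuity.
\end{itemize}
Uniqueness follows because the difference $w$ of two solutions solves the homogeneous problem. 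Testing with $w$ and taking imaginary parts gives $\tfrac{d}{dt}\|w\|^2=2\operatorname{Im}(Lw,w)=0$, so $w\equiv0$. For time-dependent self-adjoint $L$ this imaginary part vanishes identically.

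I expect the main obstacle to be Step 1. The Schrödinger structure gives no coercive $H^2$ control from the imaginary part, so the bound must come from the time-differentiated equation, with the compatibility condition supplying $z_m(0)=0$. The lower-order term $L$ must then be absorbed into the real-part identity via interpolation and Gronwall.
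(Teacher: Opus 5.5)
Your proof is correct. It uses the same overall framework as the paper: Galerkin approximation, the time-differentiated Galerkin system with $f(0,\cdot)=0$ entering through $\partial_t v_m(0)=0$, weak-$*$ limits, a Lions--Magenes argument for continuity in time, and the $L^2$ energy identity for uniqueness. The a priori estimates, however, are organized differently.

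\textbf{The paper's route.} The paper takes an arbitrary Hilbert basis of $H^2_0(M)$ and gets the $H^2$ bound first, from the conserved energy of the flow.
\begin{itemize}
\item Testing with $\partial_t v_m$ and taking real parts gives $\frac{d}{dt}a(t;v_m,v_m)=-a'(t;v_m,v_m)+2\mathfrak{Re}\langle f,\partial_t v_m\rangle$.
\item The forcing term is integrated by parts in time and the G\aa rding-type coercivity of $a$ is used. Gronwall then bounds $\|v_m\|_{L^\infty(0,T;H^2)}$ with no information on $\partial_t v_m$.
\item Only afterwards is $\partial_t v_m$ bounded via the differentiated system.
\item $H^4$ regularity is obtained only for the limit, by solving the clamped problem $(\Delta^2+L)v(t)=f(t)-\I\partial_t v(t)$ at each fixed $t$.
\end{itemize}

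\textbf{Your route.} You close a single Gronwall argument that couples the $L^2$ estimate for $z_m=\partial_t v_m$ with an elliptic-type identity: the real part of the undifferentiated system tested against $v_m$. Because your eigenbasis is invariant under the clamped $\Delta^2$, the relation $\Delta^2 v_m=P_m(f-\I\partial_t v_m-Lv_m)$ gives uniform $H^4$ bounds already at the Galerkin level.

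\textbf{What each buys.} Your approach avoids differentiating the form and the time integration by parts, and lets you extract limits directly in $L^\infty(0,T;H^4\cap H^2_0)$. The paper's approach works with any basis and keeps the $H^2$ estimate independent of the estimate for $\partial_t v_m$.

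\textbf{Corrections.}
\begin{itemize}
\item The identity $\|\partial_t v_m\|^2=\operatorname{Im}(f-Lv_m,\partial_t v_m)$ you write first is false, since $\operatorname{Im}(\Delta v_m,\Delta\partial_t v_m)$ does not vanish in general. It is harmless only because you discard it.
\item There is a sign slip: $\partial_t v=-\I(f-\Delta^2 v-Lv)$.
\item More importantly, your alternative of proving the Galerkin sequence is Cauchy by applying the energy estimate to $v_m-v_{m'}$ does not work as stated. The difference satisfies no common projected system. Comparing $v_m$ with $P_m v_{m'}$ instead leaves the term $P_m L(I-P_m)\partial_t v_{m'}$. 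Controlling it needs uniform bounds on $\partial_t v_{m'}$ stronger than $L^\infty(0,T;L^2)$, which $f\in H^1(0,T;L^2(M))$ does not provide.
\end{itemize}
For strong continuity in time, rely on the Lions--Magenes (energy-identity) argument as the paper does. Alternatively, approximate $f$ by smoother data vanishing at $t=0$ and use the linear estimate together with uniqueness.
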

\begin{proof}
The proof of this result is a bit long, and therefore, we divided it into several steps.
\subsubsection*{\textbf{Step 1: (Set up)}}
We consider the sesquilinear form associated with IBVP \eqref{linear_main_IBVP}
\begin{align}\label{sesquilinear_form}
a(t;u,v):= \int_{M} \Delta u(t,x) \ \overline{\Delta v(t,x)} \,\df x + \int_M Lu(t,x) \ \overline{v(t,x)} \,\df x, \quad u,v \in H^2_{0}(M).
\end{align}
Then we have
\begin{align}\label{estimate_sesquilinear}
a(t;u,u) + \delta \|u\|^2_{L^2(M)}\geq \alpha \|u\|^2_{H^2(M)},\quad \delta, \alpha >0,  u\in H^2_{0}(M), t\in (0,T).
\end{align}
\noindent For a Hilbert basis $\{e_k : k\in \mathbb{N}\}$ of $H^2_{0}(M)$, consider an approximated solution of size $m\in \mathbb{N}$ for the equation \eqref{linear_main_IBVP}
\begin{align}\label{approx_solution}
v_m(t,x) : = \sum_{k=1}^{m} g_{k,m}(t)e_k(x), \quad (t,x)\in (0,T)\times M.
\end{align}
 The functions $g_{k,m}$, appearing in the above relation, are chosen so that the function $v_m$ satisfies the following initial value problem:
\begin{align}\label{approx_v_mequation}
\begin{cases}
\I\langle \partial_tv_m(t,\cdot),e_k\rangle_{L^2(M)} + a(t;v_m(t,\cdot),e_k) = \langle f(t,\cdot),e_k\rangle_{L^2(M)}, \quad t\in (0,T),\\
v_m(0,\cdot) = 0
\end{cases}
\end{align}
for all $k=1,2,\dots,m$.
\subsubsection*{\textbf{Step 2}:} In this step, our goal is to prove that the equation \eqref{approx_v_mequation} admits a unique solution $v_m\in W^{1,\infty}(0,T;H^2_0(M))$. We begin by showing that a solution $v_m$ of \eqref{approx_v_mequation}  belongs to   $L^{\infty}(0,T;L^2(M))$. For $k\in\{1,2,\dots,m\}$ and a fixed $t\in(0,T)$, multiply the equation \eqref{approx_v_mequation} by $\overline{g_{k,m}(t)}$ and sum over $k=1,2,\dots,m$ to get
$$\I\langle \partial_tv_m(t,\cdot),v_m(t,\cdot)\rangle_{L^2(M)} + a(t;v_m(t,\cdot),v_m(t,\cdot)) = \langle f(t,\cdot),v_m(t,\cdot)\rangle_{L^2(M)}, \quad t\in (0,T).$$
Equating the imaginary parts of this relation, we have
$$\frac{d}{ds}\|v_m(s,\cdot)\|^2_{L^2(M)}=2 \mathfrak{Im} \langle f(s,\cdot),v_m(,s\cdot)\rangle_{L^2(M)},\quad s\in (0,T).$$
Integrating the above with respect to $s$ over the interval $(0, t)$ and  using $v_m(0,\cdot) = 0,$ we conclude
$$\|v_m(t,\cdot)\|^2_{L^2(M)}=2 \mathfrak{Im}\int_0^t \langle f(s,\cdot),v_m(s,\cdot)\rangle_{L^2(M)} \,\df s \leq \int_0^t \|f(s,\cdot)\|^2_{L^2(M)}\,\df s + \int_0^t \|v_m(s,\cdot)\|^2_{L^2(M)}\,\df s.$$
Using Gr\"onwall's inequality \cite[Appendix B]{Evans_Book}, we obtain a constant $C_0$ that depends only on $T$ and $M$ such that
\begin{align}\label{estimate_L_2_norm}
\|v_m\|_{L^\infty(0,T;L^2(M))}\leq C_0\|f\|_{L^2((0,T)\times M)}.
\end{align}
\tblue{This proves that $v_m$ is in $L^{\infty}(0,T;L^2(M))$ and to prove the boundedness of the derivative, we consider the function $w_m:=\partial_tv_m$}. Observe that this $w_m$ solves the following Cauchy problem for $k=1, \dots, m$:
\begin{align}\label{approx_equation}
\begin{cases}
\I\langle \partial_tw_m(t,\cdot),e_k\rangle_{L^2(M)} + a(t;w_m(t,\cdot),e_k) = a'(t;v_m(t,\cdot),e_k)+\langle \partial_tf(t,\cdot),e_k\rangle_{L^2(M)}, \quad t\in (0,T),\\
w_m(0,\cdot) = 0.
\end{cases}
\end{align}
Here
\begin{align}\label{a_prime}
a'(t;u,v):=   -\int_M (\partial_tL)u(t,x)\ \overline{v(t,x)} \,\df x, \mbox{ for } t\in(0,T) \mbox{ and for } u, v\in H^2_0(M),
\end{align}
where \(\partial_t L\) is the same operator as \(L\), but with the coefficients changed to the time derivative of the coefficients of \(L\). Next, we multiply $\overline{g'_{k,m}(t)}$ to the first line of \eqref{approx_equation} and then sum over $k = 1,\dots,m$ to obtain
$$\I\|\partial_tv_m(t,\cdot)\|^2_{L^2(M)}+a(t;v_m(t,\cdot),\partial_tv_m(t,\cdot)) = \langle f(t,\cdot),\partial_tv_m(t,\cdot)\rangle_{L^2(M)}, \quad t\in (0,T).$$
\tblue{This time, we equate the real parts of the above identity and get}
$$a(s;v_m(s,\cdot),\partial_tv_m(s,\cdot))+a(s;\partial_tv_m(s,\cdot),v_m(s,\cdot))=2\mathfrak{Re}\langle f(s,\cdot),\partial_tv_m(s,\cdot)\rangle_{L^2(M)},  \quad s\in (0,T). $$
We can rewrite this as follows:
\begin{align}\label{estimate_a}
\frac{d}{ds}a(s;v_m(s,\cdot),v_m(s,\cdot))  &= -a'(s;v_m(s,\cdot),v_m(s,\cdot)) + 2\mathfrak{Re}\langle f(s,\cdot),\partial_tv_m(s,\cdot)\rangle_{L^2(M)},  \quad s\in (0,T)\nonumber \\
\Longrightarrow 	a(t;v_m(t,\cdot),v_m(t,\cdot)) &=-\int_{0}^{t}a'(s;v_m(s,\cdot),v_m(s,\cdot))\,\df s+2\mathfrak{Re}\int_0^t \langle f(s,\cdot),\partial_tv_m(s,\cdot)\rangle_{L^2(M)}\,\df s.
\end{align}
We use $\displaystyle \int_0^t \langle f(s,\cdot),\partial_tv_m(s,\cdot)\rangle_{L^2(M)} \,\df s = \langle f(t,\cdot),\partial_tv_m(t,\cdot)\rangle_{L^2(M)}-\int_0^t \langle \partial_tf(s,\cdot),v_m(s,\cdot)\rangle_{L^2(M)} \,\df s$ to get
\begin{align*}
\left|\int_0^t \langle f(s,\cdot),\partial_tv_m(s,\cdot)\rangle_{L^2(M)} \,\df s\right|\leq& \|f(t,\cdot)\|_{L^2(M)}\|v_m(t,\cdot)\|_{L^2(M)}+\int_0^t \|\partial_t f(s,\cdot)\|_{L^2(M)}\|v_m(s,\cdot)\|_{L^2(M)}\,\df s. 
\end{align*}
With the help of equations \eqref{estimate_sesquilinear}, \eqref{a_prime}, and \eqref{estimate_a}, we get the following upper-bound of $\|v_m(t,\cdot)\|^2_{H^2(M)}$
\begin{align*}
& C\left(\int_0^t \|v_m(s,\cdot)\|^2_{H^1(M)} \,\df s + \int_0^t \|\partial_tf(s,\cdot)\|^2_{L^2(M)} \,\df s  \right) + \frac{2}{\alpha} \|f(t,\cdot)\|_{L^2(M)}\|v_m(t,\cdot)\|_{L^2(M)}+ \frac{\delta}{\alpha}\|v_m(t,\cdot)\|^2_{L^2(M)}, 
\end{align*}
where $C$ is a positive constant. Using this upper bound, inequalities
\begin{align*}
&\|f(t,\cdot)\|_{L^2(M)}\|v_m(t,\cdot)\|_{L^2(M)}\leq \frac{\alpha}{4}\|v_m(t,\cdot)\|^2_{L^2(M)}+\frac{1}{\alpha}\|f(t,\cdot)\|^2_{L^2(M)},\\
&\|f(t,\cdot)\|_{L^2(M)} = 2\mathfrak{Re}\int_0^t \langle f(s,\cdot),\partial_tf(s,\cdot)\rangle_{L^2(M)} \,\df s\leq  \int_0^t \left(\|f(s,\cdot)\|^2_{L^2(M)} + \|\partial_tf(s,\cdot)\|^2_{L^2(M)}  \right)\,\df s,
\end{align*}
and the estimate \eqref{estimate_L_2_norm}, we get
\begin{align*}
\|v_m(t,\cdot)\|^2_{H^2(M)} \leq 2C \int_0^t \|v_m(s,\cdot)\|^2_{H^1(M)} \,\df s + 2\left(C+\frac{2}{\alpha^2}+ \frac{C_0^2}{2}+ \frac{\delta C_0^2}{\alpha}\right)\|f\|^2_{H^1(0,T;L^2(M))}.
\end{align*}
Next, we apply Gr\"onwall's inequality to find a positive constant $C'=C'(T,\alpha )$ such that
\begin{align}\label{estimate_H_2_norm}
\|v_m\|_{L^\infty(0,T;H^2(M))}\leq C'\|f\|_{H^1(0,T;L^2(M))}.
\end{align} 
To gain more regularity in the time variable, we rewrite \eqref{approx_equation} as follows:
\begin{align}\label{new_approx_equation}
\begin{cases}
\I\langle \partial_tw_m(t,\cdot),e_k\rangle_{L^2(M)} + a(t;w_m(t,\cdot),e_k) = \langle f_m(t,\cdot),e_k\rangle_{L^2(M)}, \quad t\in (0,T),\\
w_m(0,\cdot) = 0
\end{cases}
\end{align}
for all $k=1,\dots,m$, where $ f_m(t,x) :=- (\partial_tL)v_m(t,x) + \partial_tf(t,x), \ (t,x)\in(0,T)\times M.$
\vspace{2mm}\\
Following a similar strategy as above, multiply $\overline{g'_{k,m}(t)}$ to the first line of \eqref{new_approx_equation} and summing over $k=1,\dots,m$ yields
$$\I\langle \partial_tw_m(t,\cdot),w_m(t,\cdot)\rangle_{L^2(M)} + a(t;w_m(t,\cdot),w_m(t,\cdot)) = \langle f_m(t,\cdot),w_k(t,\cdot)\rangle_{L^2(M)}, \quad t\in (0,T).$$
Taking the imaginary part of both sides gives
$$\frac{d}{ds}\|w_m(s,\cdot)\|^2_{L^2(M)} = 2\mathfrak{Im}\langle f_m(s,\cdot),w_m(s,\cdot)\rangle, s\in(0,T). $$
Integrating with respect to $s$ over $(0, t)$, we get 
$$\|w_m(t,\cdot)\|^2_{L^2(M)} \leq \int_0^t \|w_m(s,\cdot)\|^2_{L^2(M)} \, ds + \int_0^t \|f_m(s,\cdot)\|^2_{L^2(M)} \,\df s,$$
for each $t\in(0, T)$ and combining this with \eqref{a_prime}, gives
$$\|w_m(t,\cdot)\|^2_{L^2(M)} \leq \int_0^t \|w_m(s,\cdot)\|^2_{L^2(M)} \,\df s + C\left( \|\partial_t f\|^2_{L^2((0,T)\times M)}+ \|v_m\|^2_{L^{\infty}(0,T;H^2(M))}\right),\ t\in(0, T).$$
Applying estimate \eqref{estimate_H_2_norm} together with Gr\"onwall's lemma, we have
\begin{align}\label{estimate_w_m_norm}
\|w_m\|_{L^{\infty}(0,T;L^2(M))}\leq C \|f\|_{H^1(0,T;L^2(M))},
\end{align}
where $C$ is another positive constant that depends only on $T, \alpha$. This completes the goal of \textit{\textbf{Step 2.}}

\subsubsection*{\textbf{Step 3}} \tblue{This is the final step, where we show the existence of a solution to our linearized IBVP \eqref{linear_main_IBVP}}. We will be using the estimates \eqref{estimate_H_2_norm} and \eqref{estimate_w_m_norm} to achieve our aim. Note from equation \eqref{estimate_H_2_norm}, the sequence $v_m$ is uniformly bounded in $L^\infty(0,T;H_0^2(M))$. Since this space is the dual of $L^1(0,T;H^{-2}(M))$, the Banach-Alaoglu theorem guarantees the existence of a subsequence $v_{m'}$ and a function $ v \in L^\infty(0,T;H_0^2(M))$ such that $v_{m'} \overset{*}{\rightharpoonup} v \ \text{in} \  L^\infty(0,T;H_0^2(M))$. Next, each $v_{m'}$ satisfies the Galerkin formulation given in equation \eqref{approx_v_mequation}. Passing to the limit as $m'\to\infty$ and using the weak-* convergences, we obtain 
$\displaystyle (i\partial_t + \Delta^2 + L)v = f \ \text{in } \ L^\infty(0,T;H^{-2}(M))$ together with the initial condition $v(0)=0$, since $v_{m'}(0)=0$ for all $m'$ and the weak-* limit preserves the value at $t=0$. In the weak formulation, $v$ satisfies
\begin{align}\label{v_solution}
\left(\I \partial_t +\Delta^2 +L\right)v=f \quad &\mbox{in} \quad (0,T) \times M,\\
v(0,\cdot)=0 \quad & \,\mbox{in} \quad\, M.
\end{align}
Furthermore, from estimate \eqref{estimate_w_m_norm} the sequence $w_m=\partial_t v_m$ is uniformly bounded in $L^\infty(0,T;L^2(M))$. The Banach-Alaoglu theorem gives a subsequence $w_{m'}$ and a function $w\in L^\infty(0,T;L^2(M))$ such that $w_{m'} \overset{*}{\rightharpoonup} w$. Since $w_{m}=\partial_t v_{m}$ for all $m$, it follows that $ \partial_t v = w \in L^\infty(0,T;L^2(M))$.
Consequently, $v \in L^\infty(0,T;H_0^2(M))\cap W^{1,\infty}(0,T;L^2(M))$. Next, by applying the same argument as in the derivation of \cite[Theorem 8.3, Remark 10.2, Chapter 3]{Lion-Magenes2}, the regularity $v\in L^\infty(0, T; H_0^2(M))$ and $\partial_tv\in L^\infty(0, T; L^2(M))$ together with $H_0^2(M)$ is continuously and densely embedded in $L^2(M)$, implies that $v$ (after possibly being redefined on a set of measure zero) is continuous function of $[0, T]\to H_0^2(M)$ such that $v\in \mathcal{C}([0, T]; H^2_0(M))$. Moreover, equation \eqref{new_approx_equation} gives $\partial^2_tv\in L^2(0, T; H^{-2}(M))$, combined with $\partial_tv\in L^\infty(0, T; L^2(M))$, the same Lions-Magenes result yields $\partial_tv \in \mathcal{C}([0, T]; L^2(M))$. Hence, we conclude $v\in \mathcal{C}([0, T]; H^2_0(M))\cap \mathcal{C}^1([0, T]; L^2(M))$.
Therefore, we have from equation \eqref{v_solution} that for each $t\in [0, T]$, $v(t,\cdot)$ is the solution to the fourth-order boundary value problem 
\begin{align*}
\left( \Delta^2 +L\right)v(t,\cdot)=f(t,\cdot)-\I \partial_t v(t,\cdot)\quad &\mbox{in} \quad  M,\\
v(t,\cdot)=  \frac{\partial v}{\partial \nu}(t,\cdot) = 0, \quad & \mbox{on} \quad  \partial M.
\end{align*}
Since $f-\I \partial_t v \in \mathcal{C}([0, T]; L^2(M))$, this implies $v\in \mathcal{C}([0, T]; H^2_0(M)\cap H^4(M))$ and we conclude our claim directly from this and equations \eqref{sesquilinear_form}, \eqref{estimate_H_2_norm} and \eqref{estimate_w_m_norm}. This completes the proof of existence part, and the uniqueness follows by considering the energy $\displaystyle E(t) =\int_{M} u\, \Bar{u}\, \df x$, and showing that $E'(t)=0$.
\end{proof}
\subsection{Source-to-solution map}
To prove Theorem \ref{e_prop}, we need higher-order energy estimates for the linearized problem, which is encoded in the following lemma.
\begin{lemma}
For each non-negative integer $\ell\geq 0$, one has the following estimate 
\begin{align}\label{Hiher_energy_estimate}
\|\partial^\ell_t v_m\|_{L^{\infty}(0,T;H^2(M))}\leq C \|f\|_{H^{\ell+1}(0,T;L^2(M))},
\end{align}
where $v_m$ is given by the relation \eqref{approx_solution} and solves the IBVP \eqref{approx_v_mequation}.
\end{lemma}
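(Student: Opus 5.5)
We argue by induction on $\ell$. The case $\ell=0$ is exactly estimate \eqref{estimate_H_2_norm} from Step 2 of the previous lemma, and the case $\ell=1$ in the weaker $L^2$-norm is \eqref{estimate_w_m_norm}. For the induction we differentiate the Galerkin system \eqref{approx_v_mequation} $\ell$ times in $t$. This is legitimate because the $g_{k,m}$ solve a linear ODE system with smooth coefficients, so they are as smooth in $t$ as $f$ allows. The Leibniz rule shows that $w^{(\ell)}_m:=\partial_t^\ell v_m$ solves, for $k=1,\dots,m$,
\begin{align*}
\I\langle \partial_t w^{(\ell)}_m,e_k\rangle_{L^2(M)} + a(t;w^{(\ell)}_m,e_k) = \langle F^{(\ell)}_m,e_k\rangle_{L^2(M)},\qquad
F^{(\ell)}_m:=\partial_t^\ell f-\sum_{j=1}^{\ell}\binom{\ell}{j}(\partial_t^jL)\,\partial_t^{\ell-j}v_m .
\end{align*}
The initial value $w^{(\ell)}_m(0)$ vanishes. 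To see this, evaluate the differentiated equations at $t=0$ recursively and use $v_m(0)=0$ together with $\partial_t^j f(0,\cdot)=0$ for $j\le \ell-1$; this compatibility is built into $H^{4\kappa}_{00}$. For $\ell=1$ the argument reduces to the one already given, and the $L^2$-projection of $F^{(\ell)}_m(0)$ vanishes.

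Next we repeat the two energy arguments of Step 2 for $w^{(\ell)}_m$, with $f$ replaced by $F^{(\ell)}_m$.

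\emph{$L^2$-level estimate.} Test with $\overline{\partial_t^\ell g_{k,m}}$, sum over $k$, and take imaginary parts. Gr\"onwall then gives
\[
\|w^{(\ell)}_m\|_{L^\infty L^2}\le C\|F^{(\ell)}_m\|_{L^2L^2}.
\]

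\emph{$H^2$-level estimate.} Test with $\overline{\partial_t^{\ell+1} g_{k,m}}$ and take real parts. This gives
\[
\tfrac{d}{dt}a(t;w^{(\ell)}_m,w^{(\ell)}_m) = -a'(t;w^{(\ell)}_m,w^{(\ell)}_m)+2\mathfrak{Re}\langle F^{(\ell)}_m,\partial_t w^{(\ell)}_m\rangle .
\]
Integrate in time and move the time derivative off $\partial_t w^{(\ell)}_m$ onto $F^{(\ell)}_m$, exactly as was done for $f$. Then apply the coercivity \eqref{estimate_sesquilinear}, Young's inequality and Gr\"onwall to obtain
\[
\|w^{(\ell)}_m\|_{L^\infty H^2}\le C\big(\|F^{(\ell)}_m\|_{H^1(0,T;L^2)}+\|w^{(\ell)}_m\|_{L^\infty L^2}\big).
\]

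The main obstacle is bounding $F^{(\ell)}_m$ in $H^1(0,T;L^2(M))$. Since $\partial_t^jL$ is a second-order operator with smooth compactly supported coefficients,
\[
\|(\partial_t^jL)\partial_t^{\ell-j}v_m\|_{L^2}\lesssim \|\partial_t^{\ell-j}v_m\|_{H^2}.
\]
Its time derivative produces terms $\|\partial_t^{\ell-j+1}v_m\|_{H^2}$ with $j\ge1$, hence of order at most $\ell$ in $t$. All lower-order terms are controlled by the induction hypothesis, giving a bound $C\|f\|_{H^{\ell}(0,T;L^2)}$. The one dangerous term is $j=1$ after differentiation, namely $(\partial_tL)\partial_t^{\ell}v_m=(\partial_tL)w^{(\ell)}_m$, which is of the same order as the unknown. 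We do not estimate it in $H^1L^2$. Instead we keep it inside the energy identity: we integrate by parts only on the $\partial_t^\ell f$ part and on the lower-order part, and we bound the remaining pairing
\[
\int_0^t\langle(\partial_tL)w^{(\ell)}_m,\partial_tw^{(\ell)}_m\rangle\,\df s
\]
by integrating by parts in time. Using self-adjointness of $\partial_tL$, this pairing becomes $\tfrac12\langle(\partial_tL)w^{(\ell)}_m,w^{(\ell)}_m\rangle$ minus a time integral of lower-order terms. These are absorbed by the coercive term via interpolation,
\[
\|w\|_{H^1}^2\le \epsilon\|w\|_{H^2}^2+C_\epsilon\|w\|_{L^2}^2,
\]
followed by Gr\"onwall. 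Combining the pieces yields
\[
\|\partial_t^\ell v_m\|_{L^\infty(0,T;H^2(M))}\le C\|f\|_{H^{\ell+1}(0,T;L^2(M))},
\]
with $C$ independent of $m$, which closes the induction.
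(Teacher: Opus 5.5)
Your proposal is correct and follows essentially the same route as the paper: induction on $\ell$, differentiating the Galerkin system, testing with the next time derivative and taking real parts, integrating by parts in time, then coercivity of $a(t;\cdot,\cdot)$ and Gr\"onwall. It also usefully makes explicit the compatibility conditions $\partial_t^j f(0,\cdot)=0$ for $j\le \ell-1$, which the paper's one-line induction for $\ell\ge 2$ tacitly requires.

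One bookkeeping correction. The pairing that actually occurs in the energy identity is $\int_0^t\langle(\partial_tL)w^{(\ell-1)}_m,\partial_tw^{(\ell)}_m\rangle\,\df s$ (the paper's $a'(s;v_m,\partial_tw_m)$ term when $\ell=1$), not $\int_0^t\langle(\partial_tL)w^{(\ell)}_m,\partial_tw^{(\ell)}_m\rangle\,\df s$. Integrating it by parts in time gives a boundary term and lower-order terms controlled by the induction hypothesis and your $L^2$-level bound, plus $\int_0^t\langle(\partial_tL)w^{(\ell)}_m,w^{(\ell)}_m\rangle\,\df s\lesssim\int_0^t\|w^{(\ell)}_m\|_{H^2(M)}^2\,\df s$. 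Gr\"onwall absorbs that term directly, so your interpolation step is harmless but not needed.
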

\begin{proof}
\tblue{The proof of this lemma is also along the same lines as we have seen above, together with mathematical induction.} For $\ell=0$ this result follows directly from equation \eqref{estimate_H_2_norm}. In order to prove the estimate for $\ell=1$, let us multiply the first line of equation \eqref{approx_equation} by $\overline{g''_{k,m}(t)}$ and summing up over $k=1,\dots,m$ to get
\begin{align*}
\I\langle \partial_tw_m(t,\cdot),\partial_tw_m(t,\cdot)\rangle_{L^2(M)} + a(t;w_m(t,\cdot),\partial_tw_m(t,\cdot)) &= a'(t;v_m(t,\cdot),\partial_tw_m(t,\cdot))\\ &\qquad+\langle \partial_tf(t,\cdot),\partial_tw_m(t,\cdot)\rangle_{L^2(M)}, &\quad t\in (0,T).
\end{align*}
Next, considering the real part of both sides of the above identity gives
\begin{align*}
&a(t;w_m(t,\cdot),\partial_tw_m(t,\cdot)) + a(t;\partial_tw_m(t,\cdot),w_m(t,\cdot)) \\ &\qquad \qquad = a'(t;v_m(t,\cdot),\partial_tw_m(t,\cdot)) + a'(t;\partial_tw_m(t,\cdot),v_m(t,\cdot)) + 2\mathfrak{Re} \langle \partial_tf(t,\cdot),\partial_tw_m(t,\cdot)\rangle_{L^2(M)}.
\end{align*}
This identity can be rewritten in the following form
\begin{align*}
\frac{d}{ds} a(s;w_m(s,\cdot),w_m(s,\cdot)) &= -a'(s;w_m(s,\cdot),w_m(s,\cdot)) + a'(s;v_m(s,\cdot),\partial_tw_m(s,\cdot)) \\&\qquad + a'(s;\partial_tw_m(s,\cdot),v_m(s,\cdot)) + 2\mathfrak{Re} \langle \partial_tf(s,\cdot),\partial_tw_m(s,\cdot)\rangle_{L^2(M)}.
\end{align*}
Integrating with respect to $s$ over the interval $(0, t)$ yields
\begin{align*}
a(t;w_m(t,\cdot),w_m(t,\cdot)) &= -\int_0^t a'(s;w_m(s,\cdot),w_m(s,\cdot))\,\df s + \int_0^t a'(s;v_m(s,\cdot),\partial_tw_m(s,\cdot))\,\df s \\&\qquad+ \int_0^t a'(s;\partial_tw_m(s,\cdot),v_m(s,\cdot)) \,\df s+ 2\mathfrak{Re} \int_0^t \langle \partial_tf(s,\cdot),\partial_tw_m(s,\cdot)\rangle_{L^2(M)} \,\df s.
\end{align*}
Using equation \eqref{a_prime} and together with estimates \eqref{estimate_H_2_norm} and \eqref{estimate_w_m_norm}, we get
\begin{align*}
a(t;w_m(t,\cdot),w_m(t,\cdot)) \leq  C\int_0^t \|w_m(s,\cdot)\|^2_{H^2(M)}  \,\df s + C' \|f\|^2_{H^2(0,T;L^2(M))}. 
\end{align*}
The above relation together with estimate \eqref{estimate_sesquilinear} gives an upper-bound for $\|w_m(t,\cdot)\|^2_{H^2(M)}$ as follow
\begin{align*}
\|w_m(t,\cdot)\|^2_{H^2(M)} \leq  C_1\int_0^t \|w_m(s,\cdot)\|^2_{H^2(M)}  \,\df s + C_2 \|f\|^2_{H^2(0,T;L^2(M))} + \frac{\delta}{\alpha}\|w_m\|^2_{L^2(M)}, \quad t\in(0, T). 
\end{align*}
From \eqref{estimate_w_m_norm} and Gr\"onwall's inequality we have
\begin{align}\label{new_estimate_w_m_norm}
\|w_m\|_{L^{\infty}(0,T;H^2(M))}\leq C \|f\|_{H^2(0,T;L^2(M))},
\end{align}
where $C$ is another positive constant depending only on $T, \alpha $. This proves the inequality \eqref{Hiher_energy_estimate} for $\ell=1$.
For the general $\ell\geq 2$, one can argue by induction and prove the following estimate
\begin{align*}
\|\partial^\ell_t v_m\|_{L^{\infty}(0,T;H^2(M))}\leq C \|f\|_{H^{\ell+1}(0,T;L^2(M))}.
\end{align*} 
\end{proof}
\noindent Next, we consider inhomogeneous linear fourth-order Schr\"odinger equation related to our main IBVP \eqref{main_IBVP}
\begin{align}\label{e_LS}
\begin{cases}
(\I\partial_t+\Delta^2-\nabla\cdot (A\nabla)+q) u = f & \text{on $(0,T) \times M$,}\\
u|_{x \in \partial M}=\frac{\partial u}{\partial \nu}|_{x \in \partial M} = 0,
\\
u|_{t = 0} = 0.
\end{cases}
\end{align}
The solution operator related to the above equation is denoted by $\mathcal{S}$ and defined as $\mathcal{S}(f):=u$.
Let us recall the following energy estimates for our linearized problem (\ref{e_LS}), which hold under the assumption that $f|_{t=0}=0$: 
\begin{align}\label{e_NRG1}
&\norm{u}_{L^\infty(0,T;L^2(M))}\leq C\norm{f}_{L^2((0,T)\times M)}, \ \ \norm{u}_{L^\infty(0,T;H^2(M))}\leq C\norm{f}_{H^1(0,T;L^2(M))},
\\
\label{e_NRG2}
&\norm{\Delta^2 u}_{L^2((0,T)\times M)}\leq C\norm{f}_{H^{1}(0, T;L^2(M))},\ \ \norm{\partial^m_tu}_{L^\infty(0,T;L^2(M))}\leq C\norm{f}_{H^{m}(0,T;L^2(M))},\\ 
\label{e_NRG3}
&\|\partial^m_tu\|_{L^{\infty}(0,T;H^2(M))}\leq C \norm{f}_{H^{m+1}(0,T; L^2(M))} \ \mbox{for} \ m\geq 1.
\end{align}
Now, we derive higher-order energy estimates for the linearized problem \eqref{e_LS}, which is needed to obtain an existence result for the nonlinear equation \eqref{main_IBVP} and for the fact that $H^{s}((0, T)\times M)$ is an algebra, when $\displaystyle s> \frac{n+1}{2}$. 
\begin{lemma}\label{e_NRG_Higher}
The system \eqref{e_LS} satisfies the following estimate
\begin{equation}\label{e_NRG4}
\norm{u}_{H^{4\kappa}((0,T)\times M)}\leq C\norm{f}_{H^{4\kappa}((0,T)\times M)},
\end{equation}
for any source $f\in H^{4\kappa}_{00}$.
\end{lemma}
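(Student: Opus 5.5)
The plan is to bootstrap the estimate \eqref{e_NRG4} from the energy estimates \eqref{e_NRG1}--\eqref{e_NRG3}. Time derivatives will be traded for spatial derivatives through the equation itself, and spatial regularity will then be recovered by elliptic regularity for the clamped operator $\Delta^2$ with boundary conditions $u=\partial_\nu u=0$. By the definition of the norm we must control two quantities: $\|u\|_{H^{4\kappa}(0,T;L^2(M))}$ and $\|u\|_{L^2(0,T;H^{4\kappa}(M))}$.

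\textbf{Step 1 (time regularity).} Since $f\in H^{4\kappa}_{00}$, we have $\partial_t^m f(0,\cdot)=0$ for $m\le 4\kappa-1$. Differentiating \eqref{e_LS} $m$ times in $t$ shows that $u_m:=\partial_t^m u$ solves the same IBVP with source
\begin{equation*}
f_m=\partial_t^m f-\sum_{j=1}^{m}\binom{m}{j}\big(-\nabla\cdot(\partial_t^jA\,\nabla)+\partial_t^j q\big)\partial_t^{m-j}u .
\end{equation*}
The compatibility $u_m(0)=0$ follows inductively from the equation at $t=0$: since $u(0)=0$ and $f(0)=0$, we get $\partial_t u(0)=0$, and so on up to order $4\kappa-1$. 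Applying \eqref{e_NRG1}--\eqref{e_NRG3} to $u_m$ and inducting on $m$ bounds $\|\partial_t^m u\|_{L^\infty(0,T;H^2(M))}$ by $C\|f\|_{H^{m+1}(0,T;L^2)}$. This is in fact already the content of \eqref{e_NRG3}, so the estimate holds for $m\le 4\kappa-1$. For the top order $m=4\kappa$, I would use the $L^\infty L^2$ bound in \eqref{e_NRG2}, which needs only $H^{4\kappa}$ in time. This yields $\|u\|_{H^{4\kappa}(0,T;L^2)}\le C\|f\|_{H^{4\kappa}}$.

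\textbf{Step 2 (spatial regularity by induction).} For each fixed $t$, $u(t)$ solves the elliptic clamped problem
\begin{equation*}
(\Delta^2+L)u=F,\qquad F:=f-\I\partial_t u,\qquad u=\partial_\nu u=0 \ \text{on}\ \partial M,
\end{equation*}
with $L=-\nabla\cdot(A\nabla)+q$ a lower-order perturbation. Elliptic regularity gives $\|u(t)\|_{H^{s+4}}\le C(\|F(t)\|_{H^s}+\|u(t)\|_{L^2})$. I would prove by induction on $j=1,\dots,\kappa$ the statement
\begin{equation*}
\|\partial_t^{m}u\|_{L^2(0,T;H^{4j}(M))}\le C\|f\|_{H^{4\kappa}}\quad\text{for } m\le 4(\kappa-j).
\end{equation*}
Each step applies the elliptic estimate to the time-differentiated equation, whose right-hand side $\partial_t^m f-\I\partial_t^{m+1}u-(\text{commutator terms})$ lies in $L^2H^{4(j-1)}$. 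This holds by the previous induction level together with the mixed regularity of $f$. Interpolation between $H^{4\kappa}(0,T;L^2)$ and $L^2(0,T;H^{4\kappa})$ controls the mixed norm $\|f\|_{H^{m}(0,T;H^{4(j-1)})}$ with $m+4(j-1)\le 4\kappa$. The base case $j=1$ uses Step 1 together with $\|\Delta^2 u\|_{L^2}$ from \eqref{e_NRG2}. Taking $j=\kappa$ and $m=0$ gives $\|u\|_{L^2(0,T;H^{4\kappa})}\le C\|f\|_{H^{4\kappa}}$.

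\textbf{Main obstacle.} The delicate point is the bookkeeping in Step 2. The commutator terms contain time derivatives of $A$ and $q$ applied to $\partial_t^{m-j}u$, and they carry only two extra spatial derivatives, so they are harmless given smooth compactly supported coefficients. The real issue is justifying the mixed-norm interpolation for $f$ and making sure the compatibility conditions from Step 1 are available at every order, so that \eqref{e_NRG2}--\eqref{e_NRG3} apply to each $\partial_t^m u$. I would first try to handle this directly using the anisotropic Sobolev interpolation on the product $(0,T)\times M$, with the vanishing traces at $t=0$ allowing extension by zero to $t<0$.
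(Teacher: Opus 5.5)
Your argument is correct in outline and uses the same mechanism as the paper: energy estimates for all time derivatives, then the equation to trade $\Delta^2$ for $\partial_t$. The spatial part is organized differently.

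\textbf{How the routes differ.} The paper inducts on $\kappa$ and reduces to bounding $\norm{\partial_t^\rho\Delta^{2\sigma}u}_{L^2((0,T)\times M)}$ for $\rho+4\sigma=4K$. It does this by substituting $\Delta^2u=f-\I\partial_tu+\nabla\cdot(A\nabla u)-qu$ once or twice and invoking the lower-order estimate. You instead fix $\kappa$, induct on the spatial level $j$, and apply fixed-time Agmon--Douglis--Nirenberg estimates for the clamped problem to each $\partial_t^mu$.

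\textbf{What each buys.}
\begin{itemize}
\item Your version makes explicit what the paper's reduction to \eqref{aim} uses tacitly. For $\sigma\ge2$, an $L^2$ bound on $\Delta^{2\sigma}u$ does not by itself control $\norm{u}_{H^{4\sigma}(M)}$ when only $u=\partial_\nu u=0$ is known on $\partial M$. Iterating the clamped elliptic estimate needs nothing beyond those boundary conditions, which every $\partial_t^mu$ inherits.
\item Your version also sidesteps the paper's $\sigma\ge2$ step, where terms of total order $4K-2$ are bounded using the hypothesis for $\kappa=K-1$.
\item The paper's route is shorter and avoids elliptic boundary theory.
\end{itemize}
Both routes need the intermediate-derivatives theorem $H^{4\kappa}(0,T;L^2(M))\cap L^2(0,T;H^{4\kappa}(M))\hookrightarrow H^{m}(0,T;H^{4\kappa-m}(M))$ for the mixed derivatives of $f$; the paper uses it without comment. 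It is standard (Lions--Magenes) on a bounded time interval, so no extension by zero to $t<0$ is required.

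\textbf{A justification to correct in your Step 2.} The commutator terms $(\partial_t^iL)\partial_t^{m-i}u$, $i\ge1$, are not harmless merely because the coefficients are smooth. At level $j$ they must lie in $L^2(0,T;H^{4(j-1)}(M))$. That requires $\partial_t^{m-i}u\in L^2(0,T;H^{4j-2}(M))$, which is two derivatives more than level $j-1$ provides.

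The fix is to establish level $j$ in order of increasing $m$. For $m=0$ there is no commutator, since $L(t)$ stays on the elliptic side. For $m\ge1$, only $\partial_t^{m-i}u$ with $m-i<m$ appear, and these are already in $L^2(0,T;H^{4j}(M))$. At $j=1$ this ordering is not needed: \eqref{e_NRG1} and \eqref{e_NRG3} already give $\partial_t^{k}u\in L^\infty(0,T;H^2(M))$ for all $k\le4\kappa-1$.
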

\begin{proof}
Let $u$ be a solution of IBVP \eqref{e_LS} then we have $$\I\partial_tu=f-\Delta^2 u + \nabla\cdot(A\nabla u) -qu.$$
Since $f\in H^{4\kappa}_{00}$, this implies that $\partial_t^mf|_{t=0}=0$ for $m \leq 4\kappa-1$. Using this together with $u|_{t=0}=0$, we obtain
\begin{equation}\label{e_IV}
\partial_t^mu|_{t=0}=0,\ {\rm when}\ m\leq 4\kappa.
\end{equation}
For $\kappa=0$, the required estimate holds by \eqref{e_NRG1}.  We next argue by mathematical induction on $\kappa$. Assume that \eqref{e_NRG4} holds true for $\kappa\leq K-1$. Then, our aim is to prove that the estimate holds for $\kappa= K$. In that case, it is enough to show that for $\rho,\sigma\in\N$ such that $\rho+4\sigma=4K$, we have
\begin{equation}\label{aim}\norm{{\partial_t}^\rho\Delta^{2\sigma} u}_{L^2((0, T)\times M)}\leq\norm{f}_{H^{4K}((0, T)\times M)}.\end{equation}
By applying $\partial_t$ to (\ref{e_LS}), we get
\begin{equation}\label{d_t}(\I\partial_t+\Delta^2-\nabla\cdot(A\nabla)+q)\partial_tu= \tilde{f}=\partial_tf+\nabla\cdot((\partial_t A)\nabla u) -(\partial_t q)u.\end{equation}
Since $u|_{t=0}=0, \partial_tf|_{t=0}=0$, this implies $\tilde{f}(0,x)=0$ and we have $\partial_tu|_{t=0} =0$. As a result, we can apply the estimate \eqref{e_NRG2} for $m=1$ to the equation (\ref{d_t})  and deduce
\[\norm{\partial_t^2u}_{L^\infty(0,T;L^2(M))}\leq C\norm{f}_{H^{2,0}((0,T)\times M)}+C\norm{(\nabla\cdot((\partial_t A)\nabla u)}_{H^{1,0}((0,T)\times M)}+C\norm{(\partial_tq)u}_{H^{1,0}((0,T)\times M)}.\]
Using $A_{ij}\in C^\infty_0((0,T)\times M\setminus\Gamma)$, $q\in C^\infty_0((0,T)\times M\setminus\Gamma)$, and estimates \eqref{e_NRG1}, \eqref{e_NRG2}, \eqref{e_NRG3} for $m=1$, we conclude that
\[\norm{\partial_t^2u}_{L^\infty(0,T;L^2(M))}\leq C\norm{f}_{H^{2}((0,T)\times M)}.\]
Next, we apply $\partial_t$ to \eqref{d_t} and use the estimations that were previously generated to get the bound
\[\norm{\partial_t^3u}_{L^\infty(0,T;L^2(M))}\leq C\norm{f}_{H^{3}((0,T)\times M)}.\]
In a similar way, one can prove that
\begin{equation}\label{e_b0}\norm{\partial_t^mu}_{L^\infty(0, T; L^2(M))}\leq C\norm{f}_{H^{m}((0, T)\times M)} \quad \mbox{for all integers  $m\ge 0$}.
\end{equation}
Specifically, we observe that estimate \eqref{e_b0} establishes \eqref{aim} in the case when $\sigma=0$ and that it holds for all $m\leq 4K$, not simply when $m$ is even. Now, let us consider the case in which $\sigma=1$. The fourth-order Schr\"odinger equation is satisfied by $u$, therefore it follows that
\begin{align} \label{e_b1}
\norm{\partial_t^{4K-4}\Delta^2 u}_{L^2((0,T)\times M)}&\leq C\norm{\partial_t^{4K-4}(f-\I\partial_t u+\nabla\cdot( A\nabla u)-qu)}_{L^2((0,T)\times M)} \nonumber \\ &\leq C\norm{f}_{H^{4K-4}((0,T)\times M)}+C\norm{\partial_t^{4K-3}u}_{L^2((0,T)\times M)}\nonumber \\  &\quad+ C\norm{\partial_t^{4K-4}(\nabla\cdot( A\nabla u))}_{L^2((0,T)\times M)} +C\norm{\partial_t^{4K-4}(qu)}_{L^2((0,T)\times M)}.
\end{align}
We infer from \eqref{e_b0} that $\norm{f}_{H^{4K-3}((0, T)\times M)}$ bounded by the second term on the right-hand side of \eqref{e_b1}. Since $A_{ij}\in C^\infty_0((0, T)\times M\setminus\Gamma)$, estimate \eqref{e_NRG3} suggests that the third term is bounded by $\norm{f}_{H^{4K-3}((0, T)\times M)}$, and estimate \eqref{e_NRG2} suggests that the fourth term is bounded by $\norm{f}_{H^{4K-4}((0, T)\times M)}$ as $q\in C^\infty_0((0, T)\times M\setminus\Gamma)$. Consequently, it follows that
\[\norm{\partial_t^{4K-4}\Delta^2 u}_{L^2((0,T)\times M)}\leq C\norm{f}_{H^{4K-3}((0,T)\times M)}.\]
This gives, when $\sigma=1$, (\ref{aim}). Only the situation where $\sigma\geq2$ needs to be addressed. Please note that
\[\begin{split}{\partial_t}^\rho\Delta^{2\sigma} u&=\, {\partial_t}^\rho \Delta^{2(\sigma-1)}(f-\I\partial_tu+\nabla\cdot( A\nabla u)-qu)\\&=\, {\partial_t}^\rho\Delta^{2(\sigma-1)}(f+\nabla\cdot( A\nabla u)-qu)-\I{\partial_t}^{\rho+1}\Delta^{2(\sigma-2)}\Delta^2 u\\&=\, {\partial_t}^\rho\Delta^{2(\sigma-1)}(f+\nabla\cdot( A\nabla u)-qu))-\I{\partial_t}^{\rho+1}\Delta^{2(\sigma-2)}(f-\I\partial_tu+\nabla\cdot( A\nabla u)-qu).\end{split}\]
The induction hypothesis then suggests that since all of the derivatives of $u$ and $f$ in the final expression are of order $4K-2$ or less,
\[\norm{{\partial_t}^\rho\Delta^{2\sigma} u}_{L^2((0,T)\times M)}\leq C\norm{f}_{H^{4K-2}((0,T)\times M)}.\]
Consequently, the proof of (\ref{aim}) for $\sigma\geq2$ is completed.
\end{proof}
\noindent With all this preparation, now we are ready to prove our well-posedness result, Theorem \ref{e_prop}.
\begin{proof}[Proof of Theorem \ref{e_prop}]
First, we address the uniqueness part. Let $u$ and $v$ be two solutions of \eqref{main_IBVP} for a given $f$. This gives the following relation:
$$(i\partial_t+\Delta^2-\nabla\cdot A\nabla + q)(u-v)+\beta(u+v)(u-v)=0.$$ 
We can determine that $u-v=0$ by using the energy estimate \eqref{e_NRG1} for the linear problem. This proves that $L_{A, q, \beta}$ is well-defined. Next, we fix a sufficiently large $\kappa\in\N$ such that $H^{4\kappa}$ is a Banach algebra. The map\[\mathcal{K}: H^{4\kappa}_{00}\times H^{4\kappa}_{00}\rightarrow H^{4\kappa}_{00}\quad \mbox{defined as} \quad \mathcal{K}(u,f)=f-\beta u^2\] is well-defined since $H^{4\kappa}_{00}$ is Banach algebra, as follows from the trace theorem.
We define the map $\phi(u,f):= u - \mathcal{S}\mathcal{K}(u,f)$, and it is noted that $\phi(u,f)=0$ indicates that the nonlinear fourth-order Schr\"odinger equation \eqref{main_IBVP} has a solution $u$. Further, note that $ \displaystyle \mathcal{S}:H^{4\kappa}_{00}\rightarrow H^{4\kappa}_{00}$ by the outcome of Lemma \eqref{e_NRG_Higher} together with \eqref{e_IV}. Therefore, it follows that
\[\phi:H^{4\kappa}_{00}\times H^{4\kappa}_{00}\longrightarrow H^{4\kappa}_{00}.\]
\textcolor{black}{\noindent We observe that the mapping $\phi$ depends smoothly on both $u$ and $f$, because $\mathcal{K}(u,f)$ is polynomial in nature while $\mathcal{S}$ acts linearly. By applying the chain rule, one finds that $\partial_i u (0,0)=\mathrm{Id}$. Hence, the implicit function theorem yields a smooth map $f\mapsto u$, from a neighborhood $\mathcal{H} \subset H^{4\kappa}_{00}$ of the zero function into $H^{4\kappa}_{00}$, satisfying $\phi(u(f), f)=0$ for all $f\in\mathcal{H}$. According to the previously proved uniqueness, this mapping agrees with $L_{A, q, \beta}$ in $\mathcal{H}$, which completes the proof of the theorem.}
\end{proof}
\vspace{-4mm}
\section{Geometric optics construction}\label{e_s3}
\noindent In this section, we present the construction of geometric optics solutions to the linear fourth-order Schr\"odinger equation. We begin by considering the homogeneous linear fourth-order Schr\"odinger equation
\begin{equation}\begin{split}\label{e_HLS} \I\partial_t u + \Delta^2 u - \nabla\cdot(A\nabla u) + q u &= 0 \quad\:\mbox{in}\quad (0,T)\times M\\
			u|_{t=0}& = 0\quad \mbox{ in }\quad M.\end{split}\end{equation}
Let $\tau>0$ be a large parameter. We look for an ansatz of the following form $u= U+R_{\tau}$, where $R_{\tau}$ is an error and $U$ is an approximate solution given by
\[U(t,x)=e^{\I\tau(\xi\cdot x+c\tau^3 t)}a(\tau;t,x)=e^{\I\tau(\xi\cdot x+c\tau^3 t)}\Bigg(\sum_{k=0}^N\frac{a_k(t,x)}{\tau^k}\Bigg), \]
where $\xi\in \mathbb{R}^n$, $c$ is a positive constant, and $a_k$ $(0\leq k\leq N)$ are the amplitude terms that need to be determined. Let us compute the left-hand side of equation \eqref{e_HLS} for this choice of $U(t,x)$. 
\begin{align}\label{Su}
\left(\I\partial_t+\Delta^2 -\nabla\cdot(A\nabla ) + q \right)U &=  e^{\I\tau(\xi\cdot x+c\tau^3t)}\left[(\I\partial_t+\Delta^2 -\nabla\cdot(A\nabla )+ q)a + \tau^4(|\xi|^4-c)a  - 4\I\tau^3 |\xi|^2(\mathcal{T}_\xi a) \right.\nonumber \\ &\quad\left. - \tau^2\left(2|\xi|^2\Delta a + 4\xi\cdot\nabla(\mathcal{T}_\xi a)-(\xi\cdot A\xi)a\right) + \tau \left(2\I\xi\cdot\nabla(\Delta a) \right.\right. \nonumber \\ & \quad \left.\left. +2\I\Delta (\mathcal{T}_\xi a)-\I(A\xi)\cdot \nabla a - \I (\nabla\cdot A\xi)a - \I \xi\cdot(A\nabla a)\right)\right],\end{align}
where $\mathcal{T}_\xi=\sum_{l=1}^n\xi^l\partial_{x^l}$ is the transport operator \tblue{(directional derivative)} in the direction of $\xi$. Further, we need the right-hand side of the above equation to vanish in the power of $\tau$. Thus we get
\begin{equation}\label{eikonal}
c=|\xi|^4,
\end{equation}
and the amplitude terms $a_k$ satisfies the following transport equations
\begin{align}\label{transports}
\mathcal{T}_\xi a_0 &=0, \nonumber \\ 
\mathcal{T}_\xi a_1 &= \frac{\I}{4|\xi|^2}(2|\xi|^2 \Delta a_0 - (\xi\cdot A\xi)a_0), \nonumber \\
\mathcal{T}_\xi a_2 &= \frac{\I}{4|\xi|^2}\left[2|\xi|^2 \Delta a_1 + 4\xi\cdot\nabla(\mathcal{T}_\xi a_1) - (\xi\cdot A\xi)a_{1}-2\I\xi\cdot\nabla(\Delta a_{0}) + \I(A\xi \cdot \nabla a_0)+\I(\nabla\cdot A\xi)a_0 \right. \nonumber \\ & \left.  \qquad \qquad + \I\xi\cdot(A\nabla a_0)\right], \nonumber \\
\mathcal{T}_\xi a_N &=\frac{\I}{4|\xi|^2}\left[2|\xi|^2 \Delta a_{N-1} +4\xi\cdot\nabla(\mathcal{T}_\xi a_{N-1})- (\xi\cdot A\xi)a_{N-1}-2\I\xi\cdot\nabla(\Delta a_{N-2})-2\I\Delta (\xi\cdot \nabla a_{N-2}) \right. \nonumber \\ & \left.  \qquad + \I(A\xi)\cdot \nabla a_{N-2} +\I(\nabla\cdot A\xi)a_{N-2} + \I\xi\cdot(A\nabla a_{N-2})-(\I\partial_t+\Delta^2-\nabla\cdot(A\nabla) + q)a_{N-3} \right],  \nonumber  \\ & \qquad \mbox{for $N=3,4,\dots$}.
\end{align}
We will fix a point $y \in M$ and let $\gamma_{y,\xi}$ denote the straight line passing through $y$ and in the direction $\xi$, that is, $\gamma_{y,\xi}(s) = s\xi + y$, $s \in \mathbb{R}$. We then select unit vectors $\omega_l \in \mathbb{S}^{n-1}$ ($l = 1, 2, \dots , (n-1)$), so that the set $\displaystyle \left\{ \frac{\xi}{|\xi|}, \omega_1, \ldots, \omega_{n-1} \right\}$ constitutes an orthonormal basis of $\mathbb{R}^n$ with respect to the Euclidean metric. Now, for a sufficiently small parameter $\delta > 0$, we define the zeroth amplitude as
\begin{equation}\label{amp0}
a_0(t,x)=\phi(t)\prod_{l=1}^{n-1}\chi_\delta(\omega_l\cdot(x-y)),
\end{equation}
where $\chi_\delta\in C^\infty_0(-\delta,\delta)$ and $\phi\in C^\infty_0((0,T))$ is a smooth cutoff function. Consequently, for every $t\in(0,T)$, the support of $a_0(t,\cdot)$ is contained in a $\delta$-tubular neighborhood of the line $\gamma_{y,\xi}(\mathbb{R})$.
	
\noindent We can use the transport equations appearing in \eqref{transports} with initial conditions set to zero on the subset $\displaystyle \Sigma_{y,\xi} = \left\{x\in \mathbb{R}^n: \xi\cdot(x-y)=0\right\}$ to iteratively compute the remaining amplitudes $a_k$ for $k \geq 1$. Therefore, we have that
\begin{align}\label{ampsk}
& a_1 (s\xi+y)= \frac{\I}{4|\xi|^2}\int_0^s (2|\xi|^2 \Delta a_0 - (\xi\cdot A\xi)a_0)(\tilde{s}\xi+y)\df \tilde{s}, \nonumber \\
&a_2 (s\xi+y) = \frac{\I}{4|\xi|^2}\int_0^s\left[2|\xi|^2 \Delta a_1 + 4\xi\cdot\nabla(\mathcal{T}_\xi a_1) - (\xi\cdot A\xi)a_{1}-2\I\xi\cdot\nabla(\Delta a_{0}) \right. \nonumber \\ & \qquad \left. \qquad \qquad \qquad \qquad+ \I(A\xi \cdot \nabla a_0)+\I(\nabla\cdot A\xi)a_0 + \I\xi\cdot(A\nabla a_0)\right](\tilde{s}\xi+y)\df \tilde{s}, \nonumber \\
&a_k(s\xi+y)=\frac{\I}{4|\xi|^2}\int_0^s \left[2|\xi|^2 \Delta a_{k-1} +4\xi\cdot\nabla(\mathcal{T}_\xi a_{k-1})- (\xi\cdot A\xi)a_{k-1}-2\I\xi\cdot\nabla(\Delta a_{k-2})\right. \nonumber \\ & \qquad \qquad \qquad \qquad \left.  \qquad -2\I\Delta (\xi\cdot \nabla a_{k-2}) +\I(A\xi)\cdot \nabla a_{k-2} +\I(\nabla\cdot A\xi)a_{k-2} + \I\xi\cdot(A\nabla a_{k-2})\right. \nonumber \\ & \qquad \left.\qquad \qquad \qquad \qquad -(\I\partial_t+\Delta^2-\nabla\cdot(A\nabla) + q)a_{k-3} \right](\tilde{s}\xi+y)\df \tilde{s}, \quad \mbox{for} \ k=3,4,\dots.
\end{align}
From the above discussion, we get for all $t \in (0,T)$, $U(t,\cdot)$ is compactly supported within a $\delta$-neighborhood of $\gamma_{y,\xi}(\mathbb{R})$. Then, substituting \eqref{eikonal} and \eqref{transports} into \eqref{Su}, it follows that
\begin{align*}
\left(\I\partial_t+\Delta^2-\nabla\cdot(A\nabla) + q\right)U& =e^{\I\tau(\xi\cdot x+|\xi|^4\tau^3 t)}\left[(\I\partial_t+\Delta^2-\nabla\cdot(A\nabla) + q)\left(\frac{a_{N-2}}{\tau^{N-2}}+\frac{a_{N-1}}{\tau^{N-1}}+\frac{a_{N}}{\tau^{N}}\right) \right. \\ &\left. \quad +\tau\Big(2\I\xi\cdot\nabla\Delta + 2\I\Delta \mathcal{T}_\xi - \I A\xi\cdot \nabla - \I (\nabla\cdot A\xi)-\I \xi\cdot A\nabla\Big) \left(\frac{a_{N-1}}{\tau^{N-1}}+\frac{a_{N}}{\tau^{N}} \right) \right.\\ &\qquad \left. -\tau^2\Big(2|\xi|^2\Delta-\xi\cdot A\xi+ 4\xi\cdot\nabla\mathcal{T}_\xi\Big)  \frac{a_{N}}{\tau^{N}}\right].
\end{align*}
As a result of direct computation, we find that
\[\norm{\left(\I\partial_t+\Delta^2-\nabla\cdot(A\nabla) + q\right)U}_{H^\sigma((0,T)\times M)}\lesssim\tau^{-N+2+4\sigma}.\]
Now, we convert the approximate solution $U(t,x)$ of equation \eqref{e_HLS} into an exact solution by adding a remainder term $R_\tau$ to $U$, writing $u=U+R_\tau$. For $u$ to be a solution of equation \eqref{e_HLS}, the remainder term $R_\tau$ solves the following system:
\begin{align}\left\{\begin{array}{rll}
\label{remainder}
\left(\I\partial_t+\Delta^2-\nabla\cdot(A\nabla) + q\right)R_\tau&=-\left(\I\partial_t+\Delta^2-\nabla\cdot(A\nabla) + q\right)U &\text{in $(0,T) \times M$,}\\
R_\tau|_{x \in \partial M} &= \frac{\partial R_\tau}{\partial \nu}|_{x \in \partial M}= 0, & \text{on  } (0, T) \times \partial M \\
R_\tau|_{t = 0} &= 0 &\text{in } M.
\end{array}\right.\end{align}
Consequently, the energy estimate \eqref{e_NRG4} directly indicates that
\begin{equation}\label{e_NRG_rem}
\norm{R_{\tau}}_{H^\sigma((0,T)\times M)} \lesssim \norm{\left(\I\partial_t+\Delta^2-\nabla\cdot(A\nabla) + q\right)U}_{H^\sigma((0,T)\times M)}\lesssim \tau^{-N+2+ 4\sigma},
\end{equation}
for any $\sigma = 4\kappa$ where $\kappa\in\N$.
\subsection{Determination of the Boundary Sources}\label{e_s3.2}
In this subsection, we show that the amplitudes of geometric optics solutions in $(0, T)\times \Gamma$ are determined by the source-to-solution map $L_{A, q, \beta}$. To this end, we recall that the $\Gamma$ is a neighbourhood of $\partial M$, and the linear fourth-order Schr\"odinger equation
\begin{align*}
	\left\{\begin{array}{rll}
	\left(\I\partial_t+\Delta^2-\nabla\cdot(A\nabla) + q\right)u &= f  &\text{in $(0,T) \times M$,}\\
	u|_{x \in \partial M}&= \frac{\partial u}{\partial \nu}|_{x \in \partial M} = 0, & \text{on } (0, T)\times \partial M \\
	u|_{t = 0} &= 0 & \text{in $ M$}.
	\end{array}\right.
	\end{align*}
Define the source-to-solution map $\mathcal{L}_{A,q}$ for the linear system as follows:
\[\mathcal{L}_{A,q}f=u|_{(0,T)\times\Gamma},\]
where source term $f$ is supported in $(0,T)\times\Gamma$ and $u$ solves the above linear system. Note that for any $A, q, \beta$, $L_{A,q,\beta}$ determines $\mathcal{L}_{A,q}$ via the expression $\mathcal{L}_{A,q}  f=\partial_\varepsilon L_{A, q, \beta}(\varepsilon f)|_{\varepsilon=0}$. This implies that 
\begin{align*}
L_{A^{(1)},q^{(1)}, \beta^{(1)}}= L_{A^{(2)}, q^{(2)}, \beta^{(2)}}\implies \mathcal{L}_{A^{(1)}, q^{(1)}}=\mathcal{L}_{A^{(2)}, q^{(2)}}.
\end{align*}
Next, we consider the fourth-order Schr\"odinger equation, which is backward in time.
\begin{align*}
	\left\{\begin{array}{rll}
		\left(\I\partial_t+\Delta^2-\nabla\cdot(A\nabla) + q\right) w &= h & \text{in $(0,T) \times M$,}\\
		w|_{x \in \partial M}&=\frac{\partial w}{\partial \nu}|_{x \in \partial M} = 0,& \text{on $(0,T) \times \partial M$,}\\
		w|_{t = T} &= 0& \text{in $M$}.
	\end{array}\right.
\end{align*}
Then, it holds that $\mathcal{L}_{A, q}^\ast h=w|_{(0,T)\times\Gamma}$ for $h$ supported in $(0,T)\times\Gamma$. In fact, for source terms $f,\,h$ supported in $(0,T)\times\Gamma$, we have
\begin{equation}\label{adjoint}\pair{\mathcal{L}_{A, q}f,h}_{L^2((0,T)\times\Gamma)}=\pair{f,w}_{L^2((0,T)\times\Gamma)}.
\end{equation}
For $\ell=1,2$, we consider the potentials $A^{(\ell)}$ and $q^{(\ell)}$, where $A^{(\ell)}_{ij}$ (for $1 \leq i,j \leq n$) and $q^{(\ell)}$ belong to $C^\infty_0((0,T) \times M \setminus \Gamma)$. For any line $\gamma_{r,\xi}$ with an initial point $r \in \partial M$ and an initial direction $\xi \in \mathbb{R}^n$, we can define a sequence of functions $a_k$ and $b_k$ corresponding to the pairs $(A^{(1)}, q^{(1)})$ and $(A^{(2)}, q^{(2)})$, respectively. To define these sequences, we first choose vectors \(\omega_1, \ldots, \omega_{n-1} \in \mathbb{S}^{n-1}\) (as before) such that the set \(\left\{\frac{\xi}{|\xi|}, \omega_1, \ldots, \omega_{n-1}\right\}\) forms an orthonormal basis of \(\mathbb{R}^n\). Again, as before, we choose the zeroth amplitude for some small \(\delta > 0\) as follows
\begin{align}\label{eq_a0}
a_0(t,x) = b_0(t,x) =\phi(t)\prod_{l=1}^{n-1}\chi_\delta(\omega_l\cdot(x-r)),
\end{align}
where $\chi_\delta\in C^\infty_0(-\delta,\delta)$ and $\phi\in C^\infty_0(0,T)$ is a smooth cutoff function. The subsequent functions $a_k$ are defined by solving the following transport equations
\begin{align}\label{sequence}
\mathcal{T}_\xi a_1 &= \frac{\I}{4|\xi|^2}\left[2|\xi|^2 \Delta a_0 - (\xi\cdot A^{(1)}\xi)a_0\right], \quad a_{1}|_{\Sigma_{r,\xi}}=0. \nonumber \\
\mathcal{T}_\xi a_2 &= \frac{\I}{4|\xi|^2}\left[2|\xi|^2 \Delta a_1 + 4\xi\cdot\nabla(\mathcal{T}_\xi a_1) - (\xi\cdot A^{(1)}\xi)a_{1}-2\I\xi\cdot\nabla(\Delta a_{0}) + \I(A^{(1)}\xi \cdot \nabla a_0) \right. \nonumber \\ & \left.  \qquad \qquad +\I(\nabla\cdot A^{(1)}\xi)a_0+ \I\xi\cdot(A^{(1)}\nabla a_0)\right], \quad a_{2}|_{\Sigma_{r,\xi}}=0.\nonumber \\
\mathcal{T}_\xi a_N &=\frac{\I}{4|\xi|^2}\Big[2|\xi|^2 \Delta a_{N-1} +4\xi\cdot\nabla(\mathcal{T}_\xi a_{N-1})- (\xi\cdot A^{(1)}\xi)a_{N-1}   -2\I\xi\cdot\nabla(\Delta a_{N-2})\nonumber \\ & \qquad \qquad-2\I\Delta (\xi\cdot \nabla a_{N-2})  + \I(A^{(1)}\xi)\cdot \nabla a_{N-2} +\I(\nabla\cdot A^{(1)}\xi)a_{N-2} + \I\xi\cdot(A^{(1)}\nabla a_{N-2})\nonumber  \\ & \qquad \qquad-(\I\partial_t+\Delta^2-\nabla\cdot(A^{(1)}\nabla) + q^{(1)})a_{N-3} \Big], \quad a_{N}|_{\Sigma_{r,\xi}}=0,\quad \mbox{for $N=3,4,\dots$}.
\end{align}
Likewise, we can define the subsequent functions \(b_k\) by solving the same transport equations as before, but replacing the potentials \(A^{(1)}, q^{(1)}\) with \(A^{(2)}, q^{(2)}\).
\vspace{2mm}\\	
\noindent For each $\tau > 0$ and $N \in \mathbb{N}$, we can define an approximate geometric optics solutions $U^{(\ell)}$ for $\ell=1,2$ associated with the amplitudes $a_k$ and $b_k$, respectively, by the expressions
\[U^{(1)}(t,x)=e^{\I(\tau\xi\cdot x+|\xi|^4\tau^4t)}\sum_{k=0}^N\tau^{-k}a_k(t,x) \quad \mbox{and} \quad U^{(2)}(t,x)=e^{\I(\tau\xi\cdot x+|\xi|^4\tau^4t)}\sum_{k=0}^N\tau^{-k}b_k(t,x).
\]
We denote \( u^{(\ell)} \) as the exact solution corresponding to the fourth-order Schr\"odinger equation
\begin{align}
	\left\{\begin{array}{rlc}
	\left(\I\partial_t+\Delta^2-\nabla\cdot(A^{(\ell)}\nabla) + q^{(\ell)}\right) u^{(\ell)} &= 0 \quad  \text{ in $(0,T) \times M$,}\\
	u^{(\ell)}|_{t = 0} &= 0,\quad \text{in}\  M
\end{array}\right.
\end{align}
as constructed above. Observe that $U^{(\ell)}$ approximates $u^{(\ell)}$ in $L^2$ up to an error of order $\mathcal{O}(\tau^{-N+2})$.
\vspace{2mm}\\
Let $\eta\in C^\infty_0(M)$ be such that $\eta=1$ in $M\setminus\Gamma$ and set $f_\ell=\left(i\partial_t+\Delta^2-\nabla\cdot(A^{(\ell)}\nabla) + q^{(\ell)}\right)(\eta u^{(\ell)})$ so that the source $f_\ell$ is supported in $(0,T)\times\Gamma$. It follows that the function $\eta u^{(\ell)}$ solves the fourth-order Schr\"odinger equation
    \begin{align}
		\left\{\begin{array}{rll}
			\left(\I\partial_t+\Delta^2-\nabla\cdot(A^{(\ell)}\nabla) + q^{(\ell)}\right) \mathcal{U}^{(\ell)}&= f_\ell,  &\text{in $(0,T) \times M$,}
			\\
			\mathcal{U}^{(\ell)}|_{x \in \partial M}&=\frac{\partial \mathcal{U}^{(\ell)}}{\partial \nu}|_{x \in \partial M} = 0, & \mbox{on  } (0, T) \times \partial M
			\\
			\mathcal{U}^{(\ell)}|_{t = 0} &= 0, & \mbox{in  } M.
		\end{array}\right.
	\end{align}
Similarly, a geometric-optics solution can be considered for the backward-in-time problem 
\begin{align*}
	\left\{\begin{array}{rll}
	\left(\I\partial_t+\Delta^2-\nabla\cdot(A^{(2)}\nabla) + q^{(2)}\right)w = 0 & \text{in $(0,T) \times M$,}\\
	w|_{t = T} = 0 & \text{in $M$}.
	\end{array}\right.
\end{align*}
Note that \( w \) can also be approximated to \( \mathcal{O}(\tau^{-N+2}) \) using the expression 
\[ e^{i(\tau \xi \cdot x + |\xi|^4 \tau^4 t)} \sum_{k=0}^N \tau^{-k} w_k(t,x), \] 
where \( w_0 \) is defined by equation \eqref{eq_a0} and for \( k \ge 1 \), \( w_k \) represents a sequence that satisfies equation \eqref{sequence} with coefficients \( A^{(2)} \) and \( q^{(2)} \). Moreover, if $\tilde{\eta}\in C^\infty_0(M)$ is chosen so that $\tilde{\eta}=1$ in $M\setminus\Gamma$, then for $h=(\I\partial_t+\Delta^2-\nabla\cdot(A^{(2)}\nabla) + q^{(2)})(\tilde{\eta}w)$, the function $\tilde{\eta}w$ solves
\begin{align*}
	\left\{\begin{array}{rll}
	\left(\I\partial_t+\Delta^2-\nabla\cdot(A^{(2)}\nabla) + q^{(2)}\right) \mathcal{W} &= h & \text{in $(0,T) \times M$,}
	\\
	\mathcal{W}|_{x \in \partial M}=\frac{\partial \mathcal{W}}{\partial \nu}|_{x \in \partial M} &= 0 & \text{on $(0,T) \times \partial M$,}\\
	\mathcal{W}|_{t = T} &= 0 & \text{in $ M$}.
	\end{array}\right.
	\end{align*}
\begin{lemma}\label{e_lemma}
Suppose that $\mathcal{L}_{A^{(1)},q^{(1)}}=\mathcal{L}_{A^{(2)},q^{(2)}}$. Then $a_k=b_k$ in $(0,T)\times\Gamma$ for all $k\in\N$. 
\end{lemma}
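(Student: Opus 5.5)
The plan is to compare the two exact solutions $u^{(1)}$ and $u^{(2)}$ on $(0,T)\times\Gamma$ through the equality of the linear source-to-solution maps. I will show that they agree there up to $\mathcal{O}(\tau^{-N+2})$ uniformly in $\tau$. Since the amplitudes $a_k,b_k$ do not depend on $\tau$ and the phases of $U^{(1)}$ and $U^{(2)}$ are identical, a comparison of coefficients in powers of $\tau^{-1}$ then gives $a_k=b_k$ on $(0,T)\times\Gamma$.

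\emph{Step 1 (incoming part).} Since $A^{(\ell)},q^{(\ell)}$ vanish in $\Gamma$, the transport equations \eqref{sequence} for $a_k$ and $b_k$ coincide along the part of the line $\gamma_{r,\xi}$ that starts at $r\in\partial M$ and runs until it first meets $\supp A^{(\ell)}\cup\supp q^{(\ell)}$. The zeroth amplitudes agree by \eqref{eq_a0}, and all $a_k,b_k$ vanish on $\Sigma_{r,\xi}$. An induction on $k$ therefore gives $a_k=b_k$ on the connected component of $\Gamma$ met first by the tube. This portion needs no data.

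\emph{Step 2 (outgoing part, via the map).} Let $v$ be the solution of the $(A^{(2)},q^{(2)})$ problem with source $f_1=(\I\partial_t+\Delta^2-\nabla\cdot(A^{(1)}\nabla)+q^{(1)})(\eta u^{(1)})$, which is supported in $(0,T)\times\Gamma$. In $\Gamma$ the coefficients vanish, so $f_1=(\I\partial_t+\Delta^2)(\eta u^{(1)})$. Since $\mathcal{L}_{A^{(1)},q^{(1)}}=\mathcal{L}_{A^{(2)},q^{(2)}}$, we get $v=\eta u^{(1)}$ on $(0,T)\times\Gamma$.

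\emph{Step 3 (identifying $v$ with $u^{(2)}$).} I would then consider $z:=v-\eta u^{(2)}$. It solves the $(A^{(2)},q^{(2)})$ equation with source $(\I\partial_t+\Delta^2)\bigl(\eta(u^{(1)}-u^{(2)})\bigr)$, supported where $\nabla\eta\neq0$, with zero Cauchy data at $t=0$ and zero boundary data. The aim is to show that this source is $\mathcal{O}(\tau^{-N+2})$ in $H^{4\kappa}$. On the incoming component of $\Gamma$ this follows from Step 1 together with the remainder bound \eqref{e_NRG_rem}. On the outgoing component I would use the adjoint identity \eqref{adjoint}, tested against the backward geometric optics solutions $\tilde\eta w$ described just before the lemma, to pin down $v$ there. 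The estimate \eqref{e_NRG4} then gives $\|z\|=\mathcal{O}(\tau^{-N+2})$. Combining this with Step 2 yields $u^{(1)}-u^{(2)}=\mathcal{O}(\tau^{-N+2})$ on the region of $\Gamma$ where $\eta=1$.

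\emph{Step 4 (coefficient comparison).} Multiply $U^{(1)}-U^{(2)}$ by $e^{-\I(\tau\xi\cdot x+|\xi|^4\tau^4t)}$. Letting $\tau\to\infty$ successively isolates $a_0-b_0$, then $a_1-b_1$, and so on. Taking $N$ large yields $a_k=b_k$ on $(0,T)\times\Gamma$ for each fixed $k$, using that $\eta$ can be chosen with $\eta=1$ on an arbitrarily large part of $\Gamma$.

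The main obstacle is Step 3 on the outgoing component of $\Gamma$. There, $\eta$ is non-constant exactly where the two solutions have already interacted with different coefficients, so the source of $z$ is not a priori small. My first attempt would be to avoid this by exploiting the freedom in $\eta$: $\mathcal{L}$ is independent of the chosen cutoff, so one can shrink the transition region of $\eta$ toward $\partial M$. If that fails, I would instead show, by a unique continuation argument for $\I\partial_t+\Delta^2$ in the coefficient-free region $\Gamma$, that the Cauchy data of $v$ and $u^{(2)}$ on the inner boundary of $\Gamma$ agree to order $\tau^{-N+2}$.
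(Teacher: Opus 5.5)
Your Steps 1 and 2 are sound. Step 1 is the same observation the paper uses: the amplitudes agree before the ray reaches the coefficients. The gap is Step 3 on the outgoing side, and that step is circular. The source of $z$ is $f_1-f_2=[\Delta^2,\eta](u^{(1)}-u^{(2)})$; your expression $(\I\partial_t+\Delta^2)(\eta(u^{(1)}-u^{(2)}))$ does not vanish in $M\setminus\Gamma$. Near the exit of the tube, $u^{(1)}-u^{(2)}$ is to leading order $e^{\I(\tau\xi\cdot x+|\xi|^4\tau^4t)}\tau^{-K}(a_K-b_K)$, where $K$ is the first index with $a_K\neq b_K$ there. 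Proving that this is $\mathcal{O}(\tau^{-N+2})$ amounts to the lemma itself. The adjoint identity \eqref{adjoint} yields one scalar per test function $\tilde\eta w$, not an $H^{4\kappa}$ bound that could ``pin down'' $v$.

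Your fallbacks do not help either. First, $u^{(1)}-u^{(2)}$ does not depend on $\eta$, so moving the transition layer of $\eta$ toward $\partial M$ cannot make it small; it only enlarges $[\Delta^2,\eta]$. Second, $v=u^{(1)}$ wherever $\eta=1$ in $\Gamma$, so asking that the Cauchy data of $v$ and $u^{(2)}$ agree on the inner boundary of $\Gamma$ is again the claim. In fact no frequency-uniform propagation of smallness inside $\Gamma$ can exist. Take a free beam along the same line and multiply it by a cutoff equal to $1$ near the outgoing segment of the tube and $0$ near the incoming one, with the transition away from the tube. This gives solutions of $\I\partial_t+\Delta^2$ in $\Gamma$, up to $\mathcal{O}(\tau^{-N+2})$, that are negligible on the incoming part and of size one on the outgoing part. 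The information must therefore come from the fact that the solutions live in all of $M$ with zero initial data, which is what pairing with a backward solution captures.

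The missing step is the argument of \cite[Lemma 3.1]{LOSST_NLS_2025}, which the paper invokes; its manifold analogue is written out in Lemma \ref{lm:determining_amplitudes}. Argue by induction on $k$, and read off the leading $\tau$-asymptotics of an exact scalar identity instead of proving smallness of $u^{(1)}-u^{(2)}$. Apply \eqref{adjoint} for $(A^{(2)},q^{(2)})$ to $f_1$ and to $f_2$, and use $\mathcal{L}_{A^{(1)},q^{(1)}}f_1=\mathcal{L}_{A^{(2)},q^{(2)}}f_1$. This gives
\[
\langle \eta(u^{(1)}-u^{(2)}),\,[\Delta^2,\tilde\eta]w\rangle_{L^2((0,T)\times\Gamma)}=\langle [\Delta^2,\eta](u^{(1)}-u^{(2)}),\,\tilde\eta w\rangle_{L^2((0,T)\times\Gamma)},
\]
where $w$ is the backward geometric optics solution with the same phase. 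Suppose $a_k=b_k$ on $(0,T)\times\Gamma$ for $k<K$. Then the phases cancel, and the coefficient of $\tau^{3-K}$ is a nonzero multiple of $\int \xi\cdot\nabla(\eta\tilde\eta)\,(a_K-b_K)\,\overline{w_0}\,\df x\,\df t$. The incoming part contributes nothing, by your Step 1. Taking $\eta=1$ on $\supp\tilde\eta$ leaves only $\xi\cdot\nabla\tilde\eta$ on the outgoing segment. There $a_K-b_K$ is constant along the lines, because its transport equation is homogeneous there by the induction hypothesis. Let $\tilde\eta$ tend to an indicator function and vary $\phi(t)$ and the transverse profile of $w_0$. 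This gives $a_K=b_K$ on the outgoing part, which closes the induction and makes your Step 4 unnecessary.
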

\begin{proof}
The proof of this lemma is very similar to \cite[Lemma 3.1]{LOSST_NLS_2025}. Therefore, we skip it.
\end{proof}
\vspace{-4mm}
\section{Proof of main result: Euclidean case}\label{main_theorem}
\noindent The aim of this section is to present a proof of Theorem \ref{th: Eulidean}. The argument is based on a second-order linearization technique and on the nonlinear interactions among the GO solutions constructed above.
\begin{proof}[Proof of Theorem \ref{th: Eulidean}]
The proof is long and involves various steps, which we discuss one by one in separate parts.
\vspace{-3mm}
\subsubsection*{\textbf{Step 1:}}
In this step, we select geometric-optics solutions to the linearized equation corresponding to distinct source terms. \tblue{For a fixed point $p\in M\setminus\Gamma$ and a small $\lambda>0$, we choose three pairwise linearly independent vectors $\xi_0$, $\xi_1$, and $\xi_2$ such that 
\[\xi_0 = \xi_1 + \xi_2\]
and 
\[|\xi_0|^4 = 1,\quad |\xi_1|^4 = 1 - \lambda^4,\quad |\xi_2|^4 = \lambda^4.\]}
\begin{figure}[ht]
	\begin{center}
		\begin{tikzpicture}[scale=0.9]
		\coordinate [circle,fill,inner sep=1.5pt,label={[label distance=0.1mm]0:$p\in M$}](A) at (0,0);
		\coordinate (B) at (0,3);
		\coordinate (C) at (-2,4);
		\coordinate (D) at (-2,1);
		\draw [->,>={Triangle[scale width=0.6]}](A) -- (C) node[midway,yshift=-1.5mm,xshift=-1.5mm]{$\xi_0$};
		\draw [->,>={Triangle[scale width=0.6]}](A) -- (B) node[midway,xshift=2.6mm]{$\xi_1$};
		\draw [->,>={Triangle[scale width=0.6]}](B) -- (C) node[midway,yshift=-2.5mm]{};
		\draw [->,>={Triangle[scale width=0.6]}](A) -- (D) node[midway,yshift=-2.5mm]{$\xi_2$};
		\draw [->,>={Triangle[scale width=0.6]}](D) -- (C) node[midway,xshift=2.6mm]{};
		\draw [decorate,decoration={brace,amplitude=5pt,mirror}]([yshift=1mm]B) -- ([yshift=1mm]C) node[midway,yshift=4mm]{$\lambda$};
		\end{tikzpicture}
		\caption{The vectors $\xi_0$, $\xi_1$ and $\xi_2$.}
		\end{center}
	\end{figure}
\tblue{For $j \in \{0,1,2\}$, let $r_j\in\partial M$ be the point where the line $\gamma_{p,\xi_j}$ intersects $\partial M$ and the vector $p-r_j$ is a positive multiple of $\xi_j$.} Next, we  define the amplitude functions $a_k^{(j)}$ and $b_k^{(j)}$ for the pairs of coefficients $(A^{(1)},q^{(1)})$ and $(A^{(2)},q^{(2)})$, respectively as follows. We start by constructing $a_0^{(j)}$ and $b_0^{(j)}$ by substituting $\xi=\xi_j$ into expression \eqref{amp0}. Then for a large $N\in\N$, we define the remaining amplitudes $a_1^{(j)},\dots,a_N^{(j)}$ and $b_1^{(j)},\dots,b_N^{(j)}$ by substituting $\xi=\xi_j$ into equation \eqref{ampsk} for all $y$ in the hyperplane $\displaystyle \Sigma_{r_j,\xi_j}$. For $\tau>0$, we can define the approximate geometric optics solutions $U^{(1)}_j(t,x)$ and $U^{(2)}_j(t,x)$ for the pairs of coefficients $(A^{(1)},q^{(1)})$ and $(A^{(2)},q^{(2)})$, respectively, on the following form:
\begin{align*}
U^{(1)}_j(t,x)=e^{\I(\tau\xi_j\cdot x+\tau^4|\xi_j|^4t)}\Bigg(\sum_{k=0}^N\tau^{-k}a_k^{(j)}(t,x)\Bigg),\\
U^{(2)}_j(t,x)=e^{\I(\tau\xi_j\cdot x+\tau^4|\xi_j|^4t)}\Bigg(\sum_{k=0}^N\tau^{-k}b_k^{(j)}(t,x)\Bigg).
\end{align*} 
\tblue{The corresponding exact solutions are $u^{(\ell)}_j=U^{(\ell)}_j+R^{(\ell)}_{\tau,j}$, where the remainder $R^{(\ell)}_{\tau,j}$ solves equation \eqref{remainder} for $\ell=1,2$. Take the source term $f_j=\left(\I \partial_t+\Delta^2-\nabla\cdot(A^{(\ell)}\nabla) + q^{(\ell)}\right)(\eta u^{(\ell)}_j)$ , where $\eta\in C_0^\infty(M)$ with $\eta=1$ in $M\setminus\Gamma$.} Then, for $j=1,2$ the function $\eta u^{(\ell)}_j$ solves
\begin{align}
\left\{\begin{array}{rll}
\I \partial_t \mathcal{U}^{(\ell)}_j + \Delta^2 \mathcal{U}^{(\ell)}_j - \nabla\cdot(A^{(\ell)}\nabla \mathcal{U}^{(\ell)}_j)+q^{(\ell)}  \mathcal{U}^{(\ell)}_j &= f_j & \text{in $(0,T) \times M$,}\\
\mathcal{U}^{(\ell)}_j\left|_{x \in \partial M}\right.&=\frac{\partial  \mathcal{U}^{(\ell)}_j}{\partial \nu}|_{x \in \partial M} = 0, &\text{on $(0,T) \times \partial M$,}\\
\mathcal{U}^{(\ell)}_j|_{t = 0} &= 0 & \text{in $ M$}.
\end{array}\right.
\end{align} 
For $j=0$, take $\tilde{\eta}\in C_0^\infty(M)$ with $\tilde{\eta}=1$ in $M\setminus\Gamma$ then the function $\tilde{\eta} u^{(\ell)}_0$ solves 
\begin{align}
\left\{\begin{array}{rll}
\I \partial_t \mathcal{U}^{(\ell)}_0 + \Delta^2 \mathcal{U}^{(\ell)}_0 - \nabla\cdot(A^{(\ell)}\nabla \mathcal{U}^{(\ell)}_0)+q^{(\ell)}  \mathcal{U}^{(\ell)}_0 &= f_0 & \text{in $(0,T) \times M$,}\\
\mathcal{U}^{(\ell)}_0|_{x \in \partial M}&=\frac{\partial  \mathcal{U}^{(\ell)}_0}{\partial \nu}|_{x \in \partial M} = 0, & \text{on $(0,T) \times \partial M$,}\\
\mathcal{U}^{(\ell)}_0|_{t = T} &= 0& \text{in $ M$}.
\end{array}\right.
\end{align}
\subsubsection*{\textbf{Step 2:}} In this step, we derive an integral identity involving $A^{(\ell)},q^{(\ell)}$, and $\beta^{(\ell)}$ for $\ell=1,2$. We start with linearizing the following fourth-order nonlinear Schrödinger equation
\begin{align}\label{NLS}
\left\{\begin{array}{rll}
\I \partial_t u_\ell + \Delta^2 u_\ell -\nabla\cdot(A^{(\ell)}\nabla u_\ell) +q^{(\ell)} u_\ell + \beta^{(\ell)} u_\ell^2 &= f & \text{in $(0,T) \times M$,}\\
u_\ell|_{x \in \partial M} &= \frac{\partial u_\ell}{\partial \nu}|_{x \in \partial M} = 0,& \text{on $(0,T) \times \partial M$,}\\
u_\ell|_{t = 0} &= 0& \text{in $M$}.
\end{array}\right.
\end{align}
Let $\varepsilon=(\varepsilon_1,\varepsilon_2)$, where $\varepsilon_1,\varepsilon_2>0$ be small values and let \( f_1, f_2 \in H^{4k}_{00} \) be supported in the interval \( (0, T) \times \Gamma \). \tblue{Then for sufficiently small $\varepsilon$, the source term $f=\varepsilon_1f_1+\varepsilon_2f_2$ lies in $\mathcal{H}$ and we have}
\[-\frac{1}{2}\partial_{\varepsilon_1}\partial_{\varepsilon_2}L_{A^{(\ell)},q^{(\ell)},\beta^{(\ell)}}f|_{\varepsilon=0}=w_\ell|_{(0,T)\times\Gamma}.\]
The $w_\ell$ appearing in the above relation solves
\begin{align}
\left\{\begin{array}{rll}
\I\partial_tw_\ell+\Delta^2 w_\ell-\nabla\cdot(A^{(\ell)}\nabla w_\ell)+q^{(\ell)} w_\ell&=\beta^{(\ell)}\mathcal{U}^{(\ell)}_1\mathcal{U}^{(\ell)}_2 & \text{in $(0,T) \times M$}\\
w_\ell|_{x \in \partial M}&=\frac{\partial w_\ell}{\partial \nu}|_{x \in \partial M} = 0, & \text{on $(0,T) \times \partial M$,}\\
w_\ell|_{t=0} &= 0, & \text{in $M$}.
\end{array}\right.
\end{align}
This implies
\begin{align*}\int_{(0,T)\times M}w_\ell\ \overline{(\I\partial_t+\Delta^2- \nabla\cdot(A^{(\ell)}\nabla)+q^{(\ell)})\mathcal{U}^{(\ell)}_0}\ \df x\, \df t &=\int_{(0,T)\times\Gamma} w_\ell \overline{f_0}\ \df x\, \df t\\ &= -\frac{1}{2}\int_{(0,T)\times\Gamma}\partial_{\varepsilon_1}\partial_{\varepsilon_2}L_{A^{(\ell)},q^{(\ell)},\beta^{(\ell)}}f|_{\varepsilon=0}\ \overline{f_0}\ \df x\,\df t.
\end{align*}
We now integrate the left-hand side of the above equation by parts to get
\begin{align}\int_{(0,T)\times M}(\I\partial_t+\Delta^2- \nabla\cdot(A^{(\ell)}\nabla)+q^{(\ell)})w_\ell \ \overline{\mathcal{U}^{(\ell)}_0}\ \df x\, \df t &=\int_{(0,T)\times M} \beta^{(\ell)}\ \mathcal{U}^{(\ell)}_1\  \mathcal{U}^{(\ell)}_2 \ \overline{\mathcal{U}^{(\ell)}_0}\ \df x\, \df t \nonumber\\ \label{e_before} &= -\frac{1}{2}\int_{(0,T)\times\Gamma}\partial_{\varepsilon_1}\partial_{\varepsilon_2}L_{A^{(\ell)},q^{(\ell)},\beta^{(\ell)}}f|_{\varepsilon=0}\ \overline{f_0}\ \df x\,\df t.
\end{align}
Next, subtracting the above equation for $\ell =1$ from $\ell =2$ and using $L_{A^{(1)},q^{(1)},\beta^{(1)}} = L_{A^{(2)},q^{(2)},\beta^{(2)}}$ entails
\begin{align}\label{e_after}\int_{(0,T)\times M} \left(\beta^{(1)}\ \mathcal{U}^{(1)}_1\  \mathcal{U}^{(1)}_2 \ \overline{\mathcal{U}^{(1)}_0}-\beta^{(2)}\ \mathcal{U}^{(2)}_1\  \mathcal{U}^{(2)}_2 \ \overline{\mathcal{U}^{(2)}_0}\right)\ \df x\, \df t = 0.
\end{align}
We now like to approximate $\mathcal{U}_j$ in the integral \eqref{e_after} by $\eta U_j$ for $j=1,2$ and $\mathcal{U}_0$ by $\tilde{\eta} U_0$. Therefore, choose $\kappa$ sufficiently large so that  $H^{4\kappa}_{00}$ is a Banach algebra, and let $N\geq4\kappa+4$. Then, up to an error of order $\mathcal{O}(\tau^{-4})$, the integral \eqref{e_after} becomes equivalent to the integral
\[\int_{(0,T)\times M}\eta^2\tilde{\eta} \left(\beta^{(1)}\ U^{(1)}_1\  U^{(1)}_2 \ \overline{U^{(1)}_0}-\beta^{(2)}\ U^{(2)}_1\  U^{(2)}_2 \ \overline{U^{(2)}_0}\right)\, \df x\, \df t=0.\]
Since $\phi(t)$ appearing in the definition of $a_0^{(j)}$ and $b_0^{(j)}$ is arbitrary, it follows that the above integral is determined for all $t\in(0, T)$ 
\begin{equation}\label{e_I}\int_M\eta^2\tilde{\eta} \left(\beta^{(1)}(t,\cdot)\ U^{(1)}_1(t,\cdot)\  U^{(1)}_2(t,\cdot) \ \overline{U^{(1)}_0}(t,\cdot)-\beta^{(2)}(t,\cdot)\ U^{(2)}_1(t,\cdot)\  U^{(2)}_2(t,\cdot) \ \overline{U^{(2)}_0}(t,\cdot)\right)\, \df x\,=0
\end{equation}
up to a small error $\mathcal{O}(\tau^{-4})$. \tblue{For simplicity, we will omit the explicit $t$-dependence from our notation moving forward. Next, we expand the preceding integral \eqref{e_I} in powers of $\tau$ as }
\[\mathcal{I}_0+\mathcal{I}_1\tau^{-1}+ \mathcal{I}_2\tau^{-2}+\mathcal{I}_3\tau^{-3}+\mathcal{O}(\tau^{-4}) = 0.\]
\tblue{It is important to note that taking the product $\overline{U_0} U_1 U_2$ exactly cancels the phases associated with the approximate geometric optics solutions, and additionally $\eta=\tilde{\eta}=1$ in $\supp\big(\overline{U_0}U_1U_2\big)$.} 
Therefore, it follows that 
\[\mathcal{I}_0=\int_M\left(\beta^{(1)} a_0^{(0)}a_0^{(1)}a_0^{(2)}-\beta^{(2)} b_0^{(0)}b_0^{(1)}b_0^{(2)}\right)\, \df x = 0.\]
Since \( a_0^{(j)} = b_0^{(j)} \) for \( j = 0, 1, 2 \), the above identity simplifies to
\[\int_M\left(\beta^{(1)} -\beta^{(2)}\right)a_0^{(0)}a_0^{(1)}a_0^{(2)}\, \df x = 0.\]
\tblue{If we now allow $\chi_\delta$ in the definition of $a^{(j)}_0$ given by \eqref{amp0} to converge to the indicator function of $(-\delta, \delta)$, it follows that}
\[\int_{P_\delta}\left(\beta^{(1)}(x) -\beta^{(2)}(x)\right)\, \df x = 0\]
\tblue{where $P_\delta$ is a small neighborhood of $p$ contained within a ball of radius $\delta$. By taking the limit as $\delta \to 0$, we conclude that} $\beta^{(1)}(p) = \beta^{(2)}(p).$ \tblue{Because the choice of $p \in M \setminus \Gamma$ was arbitrary, we uniquely recovered the function $\beta(t, x)$ everywhere.}
\vspace{-2mm}
\subsubsection*{\textbf{Step 3:}} In this step, we recover a symmetric two-tensor field with the help of properties of \textbf{divergent beam transform}. 
By using $\beta^{(1)} = \beta^{(2)}$ in the expression for $\mathcal{I}_1$, we obtain
$$ \mathcal{I}_1=\sum_{|e|=1}\int_M\beta \left(a_{e_0}^{(0)}a_{e_1}^{(1)}a_{e_2}^{(2)}-b_{e_0}^{(0)}b_{e_1}^{(1)}b_{e_2}^{(2)}\right)\,\df x = 0$$
where $e$ is a multi-index, $e=(e_0,e_1,e_2)\in(\N \cup\{0\})^3$.\vspace{2mm}\\ 
We recall from equation (\ref{ampsk}) that $a_1^{(j)}$ and  $b_1^{(j)}$ are of the form $a_1^{(j)}=c_1^{(j)}+d_1^{(j)}$ and $b_1^{(j)}=e_1^{(j)}+f_1^{(j)}$, respectively. Here 
\begin{equation}\begin{gathered}\label{e_split}
c_1^{(j)}(s\xi_j+y) =\frac{\I}{2}\int_0^s\Big[\Delta a_0^{(j)}\Big](\tilde{s}\xi_j+y)\df\tilde{s},\quad
d_1^{(j)}(s\xi_j+y)=\frac{-\I}{4|\xi_j|^2}\int_0^s\Big[(\xi_j\cdot A^{(1)}\xi_j)a_0^{(j)}\Big](\tilde{s}\xi_j+y)\df\tilde{s},\\
e_1^{(j)}(s\xi_j+y) =\frac{\I}{2}\int_0^s\Big[\Delta b_0^{(j)}\Big](\tilde{s}\xi_j+y)\df\tilde{s},\quad
f_1^{(j)}(s\xi_j+y)=\frac{-\I}{4|\xi_j|^2}\int_0^s\Big[(\xi_j\cdot A^{(2)}\xi_j)b_0^{(j)}\Big](\tilde{s}\xi_j+y)\df\tilde{s},\end{gathered}\end{equation}
for all $y$ in the hyperplane $\Sigma_{r_j,\xi_j}=\{x\in\R^n:\xi_j\cdot(x-r_j)\}$. 
In particular, $a_0^{(j)}$ and $b_0^{(j)}$ are independent of the potentials, so are $c_1^{(j)}$ and $e_1^{(j)}$. Since $a_0^{(j)}=b_0^{(j)}$, we have $c_1^{(j)}= e_1^{(j)}$ and this further implies
\[\int_M \beta \left[ \left(d_1^{(0)}- f_1^{(0)}\right)a_0^{(1)}a_0^{(2)}+a_0^{(0)}\left(d_1^{(1)}- f_1^{(1)}\right)a_0^{(2)}+a_0^{(0)}a_0^{(1)}\left(d_1^{(2)}- f_1^{(2)}\right)\right]\, \df x = 0.\]
Again (as above) by letting $\chi_\delta$ converge to the indicator function of the interval $(-\delta,\delta)$, we deduce that  
\begin{align}
&\sum_{j=0}^2\int_{P_\delta}\beta \left(d_{1,j}-f_{1,j}\right)\,\df x=0, \quad \mbox{where} \nonumber\\
& d_{1,j}(s\xi_j+y)=\frac{-\I} {4|\xi_j|^2}\int_0^s\Big(\xi_j\cdot A^{(1)}\xi_j\Big)(\tilde{s}\xi_j+y)\df\tilde{s},\quad \mbox{and} \nonumber\\ \label{f_1j_eqn`}
&f_{1,j}(s\xi_j+y)=\frac{-\I}{4|\xi_j|^2}\int_0^s\Big(\xi_j\cdot A^{(2)}\xi_j\Big)(\tilde{s}\xi_j+y)\df\tilde{s},
\end{align}
for a small neighborhood $P_\delta$ of $p$. \tblue{Knowing that $\beta(p)$ is non-zero for almost every $p$, we can send $\delta \to 0$ to recover the quantity}
\begin{equation}\label{2-to-1}
\sum_{j=0}^2(d_{1,j}(p)-f_{1,j}(p)) = 0.
\end{equation}
\tblue{To show that $A$ can be uniquely recovered from equation \eqref{2-to-1}, we first let $\widehat{\xi}$  denote the unit vector along $\xi_2$, yielding $\xi_2 = \lambda \widehat{\xi}$. We then let $s_0 \in \mathbb{R}$ be the parameter value satisfying $\gamma_{r_2, \widehat{\xi}}(s_0) = p$.}
From \eqref{f_1j_eqn`}, we have
\[d_{1,2}(s\xi_2+r_2)=\frac{-\I}{4|\xi_2|^2}\int_0^s\Big(\xi_2\cdot A^{(1)}\xi_2\Big)(\tilde{s}\xi_2+r_2)\df\tilde{s}\]
and 
\[f_{1,2}(s\xi_2+r_2)=\frac{-\I}{4|\xi_2|^2}\int_0^s\Big(\xi_2\cdot A^{(2)}\xi_2\Big)(\tilde{s}\xi_2+r_2)\df\tilde{s}.\]
Since $\gamma_{r_2,\widehat{\xi}}(s_0)=p$, then it holds that
\begin{align*}
d_{1,2}(p)-f_{1,2}(p)&= \frac{-\I}{4|\xi_2|^2}\int_0^{s_0/\lambda}\Big(\xi_2\cdot (A^{(1)}-A^{(2)})\xi_2\Big)(\tilde{s}\xi_2+r_{2})\df\tilde{s}  \\
&=\frac{-\I}{4\lambda}\int_0^{s_0} \Big(\widehat{\xi}\cdot (A^{(1)}-A^{(2)})\widehat{\xi}\Big)(s\widehat{\xi}+r_{2})\,\df s,
\end{align*} 
and we can similarly check that $d_{1,j}(p)-f_{1,j}(p)=\mathcal{O}(1)$ as $\lambda\rightarrow0$ for $j\neq2$. Therefore, we have shown that we can recover the quantity
\[\lim_{\lambda\rightarrow0}\Bigg(\lambda\sum_{j=0}^2\left(d_{1,j}(p)-f_{1,j}(p) \right) \Bigg)=\int_0^{s_0} \Big(\widehat{\xi}\cdot (A^{(1)}-A^{(2)})\widehat{\xi}\Big)(s\widehat{\xi}+r_{2})\,\df s = 0.\]
Hence, we have the following relation  for $A := A^{(1)} - A^{(2)}$
\begin{align}\label{eq:injectivity}
\int_0^{\infty}\langle A(p - s\xi), \xi^{\odot 2}\rangle \ \df s = 0, \quad \mbox{ for all unit vectors } \xi,  
\end{align}
where $\odot$ denotes the symmetric tensor product.  Since the choice of $p\in M\setminus\Gamma$ was arbitrary, the relation \eqref{eq:injectivity} is true for all $p\in M\setminus\Gamma$. Now apply operator $\xi\cdot \nabla_p$ on \eqref{eq:injectivity} to get
\begin{align}\label{A_identity}
0=(\xi\cdot \nabla_p)  \int_0^{\infty}\langle A(p - s\xi), \xi^{\odot 2}\rangle \ \df s&=\int_0^{\infty}(\xi\cdot \nabla_p)\langle A(p - s\xi), \xi^{\odot 2}\rangle \ \df s \nonumber\\ 
&= -\int_0^{\infty}\frac{d}{ds}\left(\langle A(p - s\xi), \xi^{\odot 2}\rangle\right) \ \df s \nonumber\\&= \langle A(p), \xi^{\odot 2}\rangle, \quad \mbox{ for all unit vectors } \xi.
\end{align}
The claim is that $A(p) \equiv 0$, to see that let $\{\xi_1, \xi_2, \dots, \xi_n\}$ be an orthonormal basis of $\mathbb{R}^n$ then by \cite[Lemma 5.4]{Venky_Rohit}, the collection of $ \binom {n+1}2=\frac{n(n+1)}{2}$ symmetric two tensor of the form $\displaystyle \xi_{i}\odot\xi_{j}, \ 1\leq i,j\leq n,$ is linearly independent.
	%
Then, by \eqref{A_identity}, we have
$\langle A(p), \xi_i\odot \xi_j\rangle = 0$ for all $1 \leq i, j \leq n$, which finally implies $A(p) \equiv 0$. Thus, we show that $A^{(1)}(p) = A^{(2)}(p)$, which shows the unique recovery of $A(t,x)$ everywhere.
\subsubsection*{\textbf{Step 4:}}  \tblue{In this final step of the proof, we work on the unique recovery of $q$, that is, $q^{(1)}=q^{(2)}$ everywhere.} To achieve this,  we now consider the integral $\mathcal{I}_3$. By Using $\beta^{(1)} = \beta^{(2)}$ in the expression $\mathcal{I}_3$, we obtain
\begin{align*}
\mathcal{I}_3 = \sum_{|e|=3}\int_M\beta \left(a_{e_0}^{(0)}a_{e_1}^{(1)}a_{e_2}^{(2)}-b_{e_0}^{(0)}b_{e_1}^{(1)}b_{e_2}^{(2)}\right)\,\df x = 0, \quad \mbox{ where }e=(e_0,e_1,e_2)\in(\N \cup\{0\})^3.
\end{align*}
We recall from equation \eqref{ampsk} that $a_3^{(j)}$ and $b_3^{(j)}$ are of the form $a_3^{(j)}=c_3^{(j)}+d_3^{(j)}$ and $b_3^{(j)}=e_3^{(j)}+f_3^{(j)}$, respectively. \tblue{These amplitudes satisfy the following relations for all $y \in \Sigma_{r_j,\xi_j}=\{x\in\R^n:\xi_j\cdot(x-r_j)\}$: }
\begin{align*}
&c_3^{(j)}=  \frac{\I}{4|\xi_j|^2}\int_0^s \left[\left(2|\xi_j|^2 \Delta  +4\xi_j\cdot\nabla(\tau_{\xi_j} )- (\xi_j\cdot A^{(1)}\xi_j)\right)a^{(j)}_{2} + \left(\I(A^{(1)}\xi_j)\cdot \nabla -2\I\xi_j\cdot\nabla\Delta  \right.\right. \nonumber \\ & \left.\left.  \qquad -2\I\Delta (\xi_j\cdot \nabla) +\I(\nabla\cdot A^{(1)}\xi_j) + \I\xi_j\cdot(A^{(1)}\nabla) \right)a^{(j)}_{1}-(\I\partial_t+\Delta^2-\nabla\cdot(A^{(1)}\nabla))a^{(j)}_{0} \right](\tilde{s}\xi_j+y)\df\tilde{s}\\
&d_3^{(j)}=  \frac{\I}{4|\xi_j|^2}\int_0^s \left[q_{1} a^{(j)}_{0} \right](\tilde{s}\xi_j+y)\df\tilde{s}\\
&e_3^{(j)}=  \frac{\I}{4|\xi_j|^2}\int_0^s \left[\left(2|\xi_j|^2 \Delta  +4\xi_j\cdot\nabla(\xi_j\cdot\nabla )- (\xi_j\cdot A^{(2)}\xi_j)\right)b^{(j)}_{2} + \left(\I(A^{(2)}\xi_j)\cdot \nabla -2\I\xi_j\cdot\nabla(\Delta )\right. \right. \nonumber \\ & \left. \left.  \qquad -2\I\Delta (\xi_j\cdot \nabla) +\I(\nabla\cdot A^{(2)}\xi_j) + \I\xi_j\cdot(A^{(2)}\nabla) \right)b^{(j)}_{1}-(\I\partial_t+\Delta^2-\nabla\cdot(A^{(2)}\nabla))b^{(j)}_{0} \right](\tilde{s}\xi_j+y)\df\tilde{s}\\
&f_3^{(j)}=  \frac{\I}{4|\xi_j|^2}\int_0^s \left[q^{(2)} b^{(j)}_{0} \right](\tilde{s}\xi_j+y)\df\tilde{s}.
\end{align*}
Since $a_0^{(j)}=b_0^{(j)}$ and $A^{(1)}=A^{(2)}$, we get $a_1^{(j)}=b_1^{(j)}$, $a_2^{(j)}=b_2^{(j)}$ and $c_3^{(j)}=e_3^{(j)}$. Thus, using these, we get 
\begin{align*}
\int_M \beta \left[ \left(d_3^{(0)}- f_3^{(0)}\right)a_0^{(1)}a_0^{(2)}+a_0^{(0)}\left(d_3^{(1)}- f_3^{(1)}\right)a_0^{(2)}+a_0^{(0)}a_0^{(1)}\left(d_3^{(2)}- f_3^{(2)}\right)\right]\, \df x = 0.
\end{align*}
\tblue{Again, if  we let $\chi_\delta$ in the definition of $a^{(j)}_0$ converge to the indicator function of $(-\delta, \delta)$ then we have}
\[\sum_{j=0}^2\int_{P_\delta}\beta \left(d_{3,j}-f_{3,j}\right)\,\df x=0,\]
where $\displaystyle d_{3,j}(s\xi_j+y)= \frac{\I}{4|\xi_j|^2}\int_0^s q^{(1)}(\tilde{s}\xi_j+y)\,\df\tilde{s}$ and $\displaystyle f_{3,j}(s\xi_j+y)= \frac{\I}{4|\xi_j|^2}\int_0^s q^{(2)}(\tilde{s}\xi_j+y)\,\df\tilde{s}$
for a small neighbourhood $P_\delta$ of $p$. Since $\beta(p)$ is known and non-zero for almost all $p$, we can let $\delta\rightarrow0$ to recover the quantity
\begin{equation}\label{3-to-1}\sum_{j=0}^2(d_{3,j}(p)-f_{3,j}(p)) = 0.\end{equation}
To conclude the proof, we use the above equation \eqref{3-to-1} to get the unique recovery of $q$. \tblue{We use a similar setup as discussed in the previous steps.} Let $\widehat{\xi}$ to be the unit vector in the direction of $\xi_2$ and $\xi_2=\lambda\widehat{\xi}$. Further, assume $s_0\in\R$ be such that $\gamma_{r_2,\widehat{\xi}}(s_0)=p$. Then, we have
\begin{align*}
d_{3,2}(p)-f_{3,2}(p)&= \frac{\I}{4|\xi_2|^2}\int_0^{s_0/\lambda}(q^{(1)}-q^{(2)})(\tilde{s}\xi_2+r_{2})\,\df \tilde{s} =\frac{\I}{4\lambda^3}\int_0^{s_0} (q^{(1)}-q^{(2)})(s\widehat{\xi}+r_{2})\,\df s,
\end{align*} 
One may again check as in the previous section that $d_{3,j}(p)-f_{3,j}(p)=\mathcal{O}(1)$ as $\lambda\rightarrow0$ for $j\neq2$. Therefore, we have
\[\lim_{\lambda\rightarrow0}\Bigg(\lambda^3\sum_{j=0}^2\left(d_{3,j}(p)-f_{3,j}(p) \right) \Bigg)=\int_0^{s_0} (q^{(1)}-q^{(2)})(s\widehat{\xi}+r_{2})\,\df s = 0.\]
This is precisely the truncated ray transform of $q$, which we can differentiate with respect to $s_0$ to get $q^{(1)}(p)=q^{(2)}(p)$. Since the choice of $p\in M\setminus\Gamma$ was arbitrary, we uniquely recover the coefficient $q(t,x)$ everywhere. This completes the proof.
\end{proof}

\section{geometric case}\label{sec:geometric case}
\subsection{Gaussian beam construction}\label{subsec:Gaussian_beam}
In this section, we present the construction of special solutions of  $  (\I \PD_t +\Delta^2_g + \Delta_h + q)u=0$ in the manifold settings. We closely follow the ideas from \cite{Dos_Jems}*{Section 3}. These special solutions concentrate near a given geodesic and are usually referred to as Gaussian beams in the literature. We next embed the manifold $(M,g)$ into a closed manifold $(N,g)$. To this end, we recall some results without their proofs.
\begin{lemma}[\cite{Dos_Jems}*{Lemma 3.4}]
Let $F$ be a $C^{\infty}$ diffeomorphism of $(a,b) \times \{0\} \subset \Rn$ to a smooth $n$ dimensional manifold such that $F|_{(a,b) \times \{0\}}$ is injective and $DF(t,0)$ is invertible for $ t\in (a,b)$. If $[a_0,b_0]$ is a closed subinterval of $(a,b)$, then $F$ is a $C^{\infty}$ diffeomorphism in some neighbourhood of $[a_0,b_0]\times \{0\} \in \Rn$. 
\end{lemma}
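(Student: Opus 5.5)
The statement is a tubular-neighbourhood type result: a map that is a local diffeomorphism along the segment and injective on it is a diffeomorphism near any compact subsegment. I would prove it in two steps: first get local invertibility from the inverse function theorem, then upgrade to global injectivity on a thin neighbourhood by a compactness argument.

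\textbf{Step 1 (local diffeomorphism).} For each $t\in[a_0,b_0]$, the differential $DF(t,0)$ is invertible. By the inverse function theorem there is an open ball $B_t\ni(t,0)$ on which $F$ is a $C^\infty$ diffeomorphism onto an open set. Invertibility of $DF$ is an open condition, so after shrinking we may assume $DF$ is invertible on a neighbourhood $W$ of $[a_0,b_0]\times\{0\}$. Any injective map that is a local diffeomorphism on an open set is a diffeomorphism onto its (open) image. So it remains only to find a neighbourhood of $[a_0,b_0]\times\{0\}$ on which $F$ is injective.

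\textbf{Step 2 (injectivity by contradiction).} Let $K=[a_0,b_0]\times\{0\}$ and $K_\epsilon$ its closed $\epsilon$-neighbourhood. Suppose that for every $k$ there are points $x_k\neq y_k$ in $K_{1/k}$ with $F(x_k)=F(y_k)$. By compactness, after passing to a subsequence, $x_k\to(s,0)$ and $y_k\to(s',0)$ with $s,s'\in[a_0,b_0]$. Continuity gives $F(s,0)=F(s',0)$, and injectivity of $F$ on $(a,b)\times\{0\}$ forces $s=s'$. But then for large $k$ both $x_k$ and $y_k$ lie in the ball $B_s$ from Step 1, where $F$ is injective. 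This contradicts $x_k\neq y_k$. Hence $F$ is injective on $K_\epsilon$ for some $\epsilon>0$, and therefore on its interior, which is an open neighbourhood of $K$. Intersecting with $W$ and applying Step 1 finishes the proof.

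\textbf{Main obstacle.} The only delicate point is the compactness argument, in particular arranging that both limit points lie in the segment where injectivity is known. This works because $[a_0,b_0]$ is a closed subinterval strictly inside $(a,b)$, so the neighbourhoods $K_{1/k}$ eventually lie in the domain of $F$. The argument uses only the continuity of $F$ and the Hausdorff property of the target manifold.
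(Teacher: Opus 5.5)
Your proposal is correct. The paper gives no proof of this lemma (it is quoted from \cite{Dos_Jems}), and your argument---the inverse function theorem plus openness of invertibility of $DF$, followed by a sequential-compactness contradiction using injectivity on the compact segment and the Hausdorff property of the target---is the standard proof of this fact. You also implicitly and correctly repair the garbled hypothesis by taking $F$ to be a smooth map on an open neighbourhood of $(a,b)\times\{0\}$, which is needed for $DF(t,0)$ to make sense.
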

\begin{lemma}[\cite{Dos_Jems}*{Lemma 3.5}] \label{lem:Fermi}
Let $(N,g)$ be a closed Riemannian manifold. Let $ \gamma$ be a unit speed geodesic with no loops. Given a closed subinterval $[a_0,b_0]$ of $(a,b)$ such that $\gamma|_{[a_0,b_0]}$ intersects itself only at finitely many times $\{r_j\}_{j = 1}^{N}$ with $r_0 = a_0 < r_1 < \cdots < r_{N} < r_{N+1} = b_0$,
there exists an open cover $\{(U_j,\phi_j)\}_{j=0}^{N+1}$ of $\gamma[a_0,b_0]$ consisting of coordinate neighbourhoods having the following properties:
\begin{itemize}
\item [1.] $\phi_j(U_j)= I_{j}\times B$ where $I_0 = (a, r_0 + \epsilon)$, $I_j = (r_{j-1} - 2\epsilon, r_j + \epsilon)$ with $j = 1,\cdots,N$, $I_{N+1} = (r_N - 2\epsilon, b)$, and $B=B(0,\delta)$ is an open ball in $\mathbb{R}^{n-1}$, where $\delta$ can be taken arbitrarily  small.
\item [2.] $\phi_j(\gamma)=(r,0,\cdots,0)$ for $ r\in I_j$.
\item [3.] $r_j\in I_j$ and $ \Bar{I}_j\cap \Bar{I}_k$ is empty unless $ |j-k|\le 1$.
\item [4.] $\phi_j=\phi_k$ on $ \phi^{-1}_j\left((I_j\cap I_k) \times B\right)$.
\item [5.] Further, the metric in these coordinates satisfies $g^{jk}|_{\gamma(t)} = \delta^{jk}$, $\partial_i g^{jk}|_{\gamma(t)} = 0$. 
\end{itemize}

\end{lemma}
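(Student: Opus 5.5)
We build Fermi-type coordinates along $\gamma$ piece by piece: construct coordinates on each segment between consecutive self-intersection times, then glue them. Since $\gamma$ has no loops and meets itself only at the finitely many times $r_1<\dots<r_N$ inside $[a_0,b_0]$, we can pick $\epsilon>0$ so small that each interval $I_j$ (as in item 1) contains exactly one self-intersection time $r_j$ (the end intervals contain none besides the endpoints). Then $\gamma|_{\bar I_j}$ is injective, and the $\bar I_j$ overlap only for neighbouring indices, which gives item 3.

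On each $I_j$ we take a parallel orthonormal frame $\{\dot\gamma(r), e_1(r),\dots,e_{n-1}(r)\}$ along $\gamma$, using one single frame along all of $\gamma$ so that the charts are automatically compatible on overlaps. We define
\[F(r,y)=\exp_{\gamma(r)}\Big(\sum_{k=1}^{n-1} y^k e_k(r)\Big).\]
Then $F(r,0)=\gamma(r)$, and $DF(r,0)$ sends $\partial_r\mapsto\dot\gamma(r)$ and $\partial_{y^k}\mapsto e_k(r)$, so it is invertible. Moreover $F|_{I_j\times\{0\}}$ is injective by the choice of $I_j$. Applying the preceding lemma (\cite{Dos_Jems}*{Lemma 3.4}) on a slightly larger closed interval shows that $F$ is a diffeomorphism on $I_j\times B(0,\delta)$ for $\delta$ small. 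We set $U_j=F(I_j\times B)$ and $\phi_j=F^{-1}|_{U_j}$; this gives items 1 and 2.

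Item 4 follows because all the $\phi_j$ are inverses of the same map $F$. The only issue is that $F$ might map points of $(I_j\cap I_k)\times B$ in chart $j$ onto images coming from a different parameter region. Shrinking $\delta$ so that the images of the non-adjacent pieces are disjoint away from the intersection points removes this, using compactness and the finiteness of the intersections.

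For item 5, along $y=0$ the coordinate vectors are orthonormal, so $g_{ij}=\delta_{ij}$ there. For the first derivatives, the curves $y\mapsto F(r,sy)$ are geodesics, so the Christoffel symbols $\Gamma^{\,\cdot}_{kl}$ with transversal indices vanish on $\gamma$. Also $\nabla_{\dot\gamma}e_k=0$ and $\nabla_{\dot\gamma}\dot\gamma=0$ kill the remaining symbols with an $r$ index. Hence all Christoffel symbols vanish on $\gamma$, so $\partial g_{ij}=0$ and therefore $\partial g^{ij}=0$ there.

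The main obstacle is the gluing and injectivity bookkeeping near the self-intersection points: at such a point a neighbourhood is hit by two distinct parameter segments, and one must check that the tubes belong to different charts and do not conflict. I would handle this by making $\epsilon$ small relative to the gaps $r_{j+1}-r_j$, and then choosing $\delta$ by a compactness argument.
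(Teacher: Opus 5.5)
Your construction (one parallel orthonormal frame along $\gamma$, $F(r,y)=\exp_{\gamma(r)}(y^ke_k(r))$, the preceding lemma applied on each piece, $\phi_j=(F|_{I_j\times B})^{-1}$) is the standard Fermi-coordinate argument, and it is essentially the proof in \cite{Dos_Jems}. The paper itself gives no proof and only quotes that lemma. Your treatment of item 5 is correct. The identities $\partial_rF(r,0)=\dot\gamma(r)$ and $\partial_{y^k}F(r,0)=e_k(r)$ give $g_{ij}=\delta_{ij}$ on $\gamma$. The radial geodesics $s\mapsto F(r,sy)$ (with polarization), together with $\nabla_{\dot\gamma}\dot\gamma=0$ and $D_re_k=0$, kill every Christoffel symbol on $\gamma$, and hence every first derivative of $g_{ij}$ and $g^{ij}$ there.

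The step that fails as written is your claim that $\epsilon$ can be chosen so that each $I_j$ \emph{as in item 1} contains exactly one self-intersection time.

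\begin{itemize}
\item[(i)] The intervals written there, $I_j=(r_{j-1}-2\epsilon,r_j+\epsilon)$, contain both $r_{j-1}$ and $r_j$ for $2\le j\le N$, and $I_{N+1}=(r_N-2\epsilon,b)$ contains $r_N$.
\item[(ii)] Nothing in the hypotheses excludes $\gamma(r_{j-1})=\gamma(r_j)$. In that case $\gamma|_{\bar I_j}$ is not injective and the preceding lemma cannot be applied. In fact no chart can satisfy item 2 on $I_j$, since $\phi_j$ would have to send the single point $\gamma(r_{j-1})=\gamma(r_j)$ to both $(r_{j-1},0)$ and $(r_j,0)$.
\item[(iii)] Likewise, $\bar I_0=[a,r_0+\epsilon]$ and $\bar I_{N+1}=[r_N-2\epsilon,b]$ are not contained in $(a,b)$, and $\gamma$ is not assumed injective near $a$ or $b$. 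So the preceding lemma, which needs a closed subinterval of an interval on which $\gamma$ is injective, does not apply to them either.
\end{itemize}

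Your argument therefore proves the lemma only after the intervals are re-chosen so that each $\bar I_j$ is a compact subinterval of $(a,b)$ containing at most one self-intersection time. One choice is
$I_0=(a_0-\epsilon,r_1-\epsilon)$, $I_j=(r_j-2\epsilon,r_{j+1}-\epsilon)$ for $1\le j\le N$, and $I_{N+1}=(b_0-2\epsilon,b_0+\epsilon)$. Here $\epsilon$ should be small compared with the gaps $r_{j+1}-r_j$, and small enough that $\gamma$ stays injective on the slightly extended end pieces; this is possible because $a_0,b_0$ are not self-intersection times. This is the form intended in \cite{Dos_Jems}, where each $r_j$ lies in only one $I_j$. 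You should state this correction explicitly rather than assert the property for the intervals of item 1, which as transcribed make the statement false in general.

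Finally, item 4 needs no additional shrinking of $\delta$. If $x=F(r,y)$ with $(r,y)\in(I_j\cap I_k)\times B$, injectivity of $F$ on $I_j\times B$ and on $I_k\times B$ gives $\phi_j(x)=\phi_k(x)=(r,y)$ directly.
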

\noindent We now present the construction of special solutions of 
\begin{align}\label{!LS}
\left\{\begin{array}{rll}
(\I \PD_t +\Delta^2_g + \Delta_h + q)u&=0 & \mbox{in $(0, T)\times  M$}\\
u(0,x)&=0 & \mbox{in $M$} 
\end{array}\right.
\end{align}
known as Gaussian beams along a geodesic $\gamma$. \tblue{We first assume that $\gamma$ is non-self-intersecting. Therefore, it can be entirely contained within a single coordinate chart $(W, \Phi)$ with local coordinates $(r, y)\in I \times B(0, \delta)$ for some $\delta>0$ and some open interval $I=(a,b)$. We now consider an approximate Gaussian beam of the following form:}
\[U_\tau(t,r,y)=e^{i(\tau\psi(r,y)+\tau^4 t)}a_\tau(t,r,y).\] 
The phase function $\psi\in C^\infty(M)$ and amplitude $a_\tau\in C^\infty_0((0,T)\times M)$ are unknown, and we will determine these below. We write $P_{h,q} := \I \PD_t +\Delta^2_g +\Delta_h + q$ and compute
\begin{equation}\label{!SU}P_{h,q} \big(e^{\I(\tau\psi+\tau^4 t)}a_\tau\big)=e^{\I(\tau\psi+\tau^4 t)}\Big(\tau^4(\mathcal{E}\psi)a_\tau-2i\tau^3\mathcal{T}_1a_\tau - \tau^2 \mathcal{T}_2a_\tau + \I\tau \mathcal{T}_3 a_\tau + P_{h,q}a_\tau\Big),\end{equation}
where the operators $\mathcal{E}: C^\infty(M)\rightarrow C^\infty(M),\ \mathcal{T}_1, \mathcal{T}_2, \mathcal{T}_3:C^\infty((0,T)\times M)\rightarrow C^\infty((0,T)\times M)$ are defined by
\begin{align}
\mathcal{E}\psi:=&|\df \psi|_g^2|\df \psi|_g^2-1,\\
\mathcal{T}_1a_\tau:=&|\df \psi|_g^2\Delta_g\psi a_\tau + 2|\df \psi|_g^2\lr \df \psi,\df a_\tau\rn_g+\lr \df \psi,\df (|\df \psi|_g^2)\rn_g a_\tau,\\
\mathcal{T}_2a_\tau:=& 2\lr \df \psi,\df \Delta_g\psi \rn_ga_\tau + 4\lr \df \psi,\df a_\tau \rn_g\Delta_g\psi +|\Delta_g\psi|^2a_\tau + 4\lr \df \psi,\df (\lr \df \psi,\df  a_\tau\rn_g) \rn_g + \Delta_g(|\df \psi|^2_g) a_\tau \nonumber \\& \ + 2\lr \df (|\df \psi|^2_g), \df a_\tau \rn_g + 2\Delta_g a_\tau |\df \psi|^2_g + |\df \psi|^2_h a_\tau,\\
\mathcal{T}_3a_\tau:=& \Delta^2_g\psi a_\tau + 2\lr \df (\Delta_g\psi), \df a_\tau \rn_g + 2 \Delta_g\psi \Delta_g a_\tau + 2\Delta_g(\lr \df \psi, \df a_\tau \rn_g) + 2 \lr \df \psi, \df (\Delta_ga_\tau)\rn_g + \Delta_h\psi a_\tau \nonumber\\ & \  + 2\lr \df \psi, \df a_\tau\rn_h.
\end{align}
\tblue{Our objective is to determine the phase function $\psi$ and amplitude $a_\tau$ on $(M, g)$ by solving the eikonal equation $\mathcal{E}\psi = 0$ and associated transport equations, respectively. Because the exponential map is a global diffeomorphism on simple manifolds (see \cite{DOS}), these functions can be constructed explicitly. Consequently, on simple manifolds, one can utilize a global coordinate system to construct solutions globally on $(M, g)$. This approach, however, fails for non-simple manifolds. Instead, on a non-simple manifold, the strategy is to prescribe the Taylor series for the functions $\psi$ and $a_\tau$ within a tubular neighborhood of the geodesic $\gamma$. We achieve this by imposing the following conditions on the eikonal and transport equations}
\begin{align}\label{!eikonal}
\frac{\PD^\alpha}{\PD y^\alpha}(\mathcal{E}\psi)(r,0,\cdots,0)&=0\quad \forall r\in I,\\ \label{!transport_1}
\frac{\PD^\alpha}{\PD y^\alpha}(\mathcal{T}_1a_\tau)(r,0,\cdots,0)&=0\quad \forall r\in I, 
\end{align}
where $ \alpha\in (\N\cup\{0\})^{n-1}$ is a multi-index with $ \abs{\alpha}\le N.$ 
\subsection{Construction of phase function}
\tblue{Let us construct a phase function $\psi$ that satisfies the eikonal equation \eqref{!eikonal}. We expand $\psi$ as $\psi = \sum_{j=0}^N \psi_j(r, y)$, where each $\psi_j(r, y)$ is homogeneous of degree $j$ with respect to $y$. Setting $|\alpha| = 0$ in equation \eqref{!eikonal}, we obtain the following equation on $\gamma$:}
\[\left(\sum_{k,l=1}^ng^{kl}|_{\gamma}\p_k\psi\p_l\psi\right) \left(\sum_{p,q=1}^ng^{pq}|_{\gamma}\p_p\psi\p_q\psi\right)=1,\quad \forall \,\,r\in I.\] 
Using the identities $ \displaystyle g^{kl}|_\gamma=\delta^{kl}$ and $\displaystyle g^{pq}|_\gamma=\delta^{pq}$, we have 
\[\left(\sum_{l=1}^n(\p_l\psi)^2\right)\left(\sum_{p=1}^n(\p_p\psi)^2\right)=1.\]
Next, take $|\alpha|=1$ in (\ref{!eikonal}) and use  $g^{kl}=\delta^{kl}$, $\p_ig^{kl}=0$ and $g^{pq}=\delta^{pq}$, $\p_ig^{pq}=0$ on $\gamma$ to deduce that 
$$\left(\sum_{l=1}^n\p_{il}^2\psi\p_l\psi \right)\left(\sum_{p=1}^n(\p_p\psi)^2\right)=0\quad\forall r\in I\, \mbox{ and } i = 2, \dots n.$$
These equations are satisfied by setting \begin{equation}\label{!psi01}
\psi_0=r\quad\textrm{and}\quad\psi_1=0.
\end{equation}
\tblue{Proceeding in similar fashion, we note that evaluating equation \eqref{!eikonal} at $|\alpha| = 2$ yields the equivalent form}
\begin{align}
&\sum_{k,l=1}^n \Big( 2g^{kl}\p_{ijk}^3\psi\p_l\psi+2g^{kl}\p_{ik}^2\psi\p_{jl}^2\psi+\p_{ij}^2g^{kl}\p_k\psi\p_l\psi+4\p_ig^{kl}\p_{jk}^2\psi\p_l\psi\Big) \Big(\sum_{p,q=1}^n\p_p\psi \p_q\psi\Big) \nonumber\\&+ \Big(\sum_{k,l=1}^n \p_jg^{kl}\p_{k}\psi\p_l\psi + g^{kl}\p_{jk}^2\psi\p_l\psi + g^{kl}\p_k\psi\p_{jl}^2\psi\Big)\Big(\sum_{p,q=1}^n \p_ig^{pq}\p_{p}\psi\p_q\psi + g^{pq}\p_{ip}^2\psi\p_q\psi + g^{pq}\p_p\psi\p_{iq}^2\psi\Big)=0,
\end{align}
for any $i,j=2,\dots,n$. \tblue{Since the conditions $\partial_i g^{kl} = 0$, $\partial_j \psi = \delta_{j1}$, and $\partial_1 \partial_j \psi = 0$ for $j \neq 1$ hold on $\gamma$, the above equation simplifies to}
\begin{equation}\label{!ric}\p_{ij}^2g^{11}+2\p_{1ij}^3\psi+2\sum_{k=2}^n\p_{ki}^2\psi\p_{kj}^2\psi=0\quad\forall r\in I.
\end{equation}
\tblue{Accordingly, we introduce the quadratic form $\psi_2(r,y) = \frac{1}{2}\sum_{i,j=1}^{n-1}H_{ij}(r)y^iy^j$. Here, $H$ denotes a smooth, symmetric, complex matrix whose imaginary part is strictly positive-definite on $I$ (that is, $\Im H(r) > 0$ for all $r \in I$). Noting that $\partial_{y^i}\partial_{y^j}\psi_2 = H_{ij}$, we deduce that $H(r)$ must satisfy the matrix Riccati equation in order to satisfy \eqref{!ric}}
\begin{equation}\label{!riccati}\frac{d}{dr}H+H^2+D=0\quad \forall r\in I,\quad \mbox{here }D_{ij}=\frac{1}{2}\p_{y^i}\p_{y^j}g^{11}.
\end{equation}
\tblue{We solve this ODE using \cite{Lassas2001boundary}*{Lemma 2.56} to obtain the required $H$. With this $H$, we are able to determine $\psi_2$. Following a similar approach, one can determine the functions $\psi_j$ for $j\ge 3.$}
\subsection{Construction of amplitudes} In this subsection, we construct the amplitudes satisfying \eqref{!transport_1}. Let us write $a_\tau$ as
\begin{equation}\begin{split}\label{eq transport}
a_\tau(t,r,y)=\sum_{k=0}^N\tau^{-k}a_k(t,r,y),\quad a_k(t,r,y)=\phi(t)\chi\left(\frac{|y|}{\delta'}\right)\sum_{j=0}^N\,a_{k,j}(t, r,y).\end{split}
\end{equation}
In the above relation, $a_{k,j}$ is a homogeneous polynomial of degree $j$ in the variable $y$, $\chi\in C^\infty_0(\R)$ satisfies $\displaystyle \chi(s)= \left\{\begin{array}{cc}
    1, &  |s|\leq 1/4\\
    0, &  |s|\geq 1/2
\end{array}\right.$ and $\phi\in C^\infty_0(0,T)$ is a smooth cutoff function.
From the definition of $\mathcal{T}_1a_\tau$, we see that $a_0$ satisfies the following relation for any multi-index $\alpha$ with $ \abs{\alpha}\le N$ on $\gamma$
\begin{align}\label{eq_transport_v0}
\PD^{\alpha}_{y}\left( | \df\psi|^2_g\Delta_g\psi a_0 + 2|\df\psi|_g^2\lr \df\psi,\df a_0\rn_g+\lr \df\psi,\df(|\df\psi|_g^2)\rn_g a_0 \right) = 0.
\end{align}
By Lemma \ref{lem:Fermi} we have  that $g^{kl}|_{\gamma(r)}=\delta^{kl}$ and for $ |\alpha|=0$, this preceding equation reduces to $$\frac{d}{dr}a_{0,0}+\frac{1}{2}\tr(H)a_{0,0}=0\quad\forall r\in I. $$ 
This implies 
\begin{equation}\label{!amp00}
a_{0,0} (r) =c_0e^{-\frac{1}{2}\int_{r_0}^r\tr(H)(s)\df s}, \quad c_0 = a_{0,0}(r_0).
\end{equation}
\noindent \tblue{We can construct the subsequent terms $a_{0,j}$ (for $j=1,\dots,N$) by evaluating equation \eqref{eq_transport_v0} at $|\alpha|=j$, which reduces the problem to solving a sequence of linear first-order ODEs along $\gamma$. For instance, taking $|\alpha|=1$ in \eqref{eq_transport_v0} and applying the definition of $\mathcal{T}_1$, we obtain an equation of the form}
\begin{align}\label{eq_transport_v_0j}
\p_ra_{0,1}+\frac{1}{2}\tr(H)a_{0,1}+ \nabla_y\psi_2\cdot \nabla_y a_{0,1}+\Pi_1=0\quad\forall r\in I\quad \mbox{and} \quad a_{0,1}(r_0)=0,
\end{align}
where $\Pi_1$ is a homogeneous polynomial of degree $1$ in $y$, whose coefficients depend only upon $a_{0,0}$ and $\{\psi_l\}_{l=0}^{3}$. To construct the remaining amplitude terms $a_k$, we proceed by solving the equations 
\begin{align}\label{eq_transport_v_1}
2\I \mathcal{T}_1a_1+\mathcal{T}_2a_0 &= 0\\ \label{eq_transport_v_2}
2\I \mathcal{T}_1a_2+\mathcal{T}_2a_1-\I \mathcal{T}_3a_0 &= 0\\ \label{eq_transport_v_k}
2\I \mathcal{T}_1a_k+\mathcal{T}_2a_{k-1}-\I \mathcal{T}_3a_{k-2} - P_{h,q}a_{k-3} &= 0 \ \mbox{ for $k\ge 3$ up to $N$-th order on $\gamma$.}
\end{align}
\tblue{The proof is largely identical to the argument for $a_0$, so we omit the details and instead refer the reader to \cites{Dos_Jems, Kenig_Salo_Survey}. To establish an analogue of \eqref{e_split} for approximate Gaussian beams, we evaluate equation \eqref{eq_transport_v_1} at $|\alpha|=0$ along $\gamma$. This yields}
\begin{equation}
\begin{aligned}
4\I\left(\frac{d}{dr}a_{1,0}+\frac{1}{2}\tr(H)a_{1,0}\right)=&-2\frac{d}{dr}(\Delta_g\psi)a_{0,0}-4\tr(H)\frac{d}{dr}a_{0,0}-(\tr(H))^2a_{0,0}-4\frac{d^2}{dr^2}a_{0,0} \nonumber\\& \ -\Delta_g(|\df\psi|^2_g)a_{0,0}-2\Delta_g a_0-h^{11}a _{0,0}.
\end{aligned}
\end{equation}
Therefore, we may choose
\begin{align}\label{!split_1}
  a_{1,0}(r)=c_{1,0}(r)+d_{1,0}(r)  
\end{align}
where 
\begin{align*}
   c_{1,0}(r)&=e^{-\frac{1}{2}\int_{r_0}^r\tr(H)(s)\df s}\int_{r_0}^r\frac{\I}{4}e^{\frac{1}{2}\int_{r_0}^s\tr(H)\df \tilde{s}}\left(2\frac{d}{dr}(\Delta_g\psi)a_{0,0}+4\tr(H)\frac{d}{dr}a_{0,0}+(\tr(H))^2a_{0,0} \right. \nonumber \\ & \left. \qquad+4\frac{d^2}{dr^2}a_{0,0} +\Delta_g(|\df\psi|^2_g)a_{0,0}+2\Delta_g a_0\right)(s) \df s \nonumber\\
   d_{1,0}(r) &=e^{-\frac{1}{2}\int_{r_0}^r\tr(H)(s)\df s}\int_{r_0}^r\frac{\I}{4}e^{\frac{1}{2}\int_{r_0}^s\tr(H)\df \tilde{s}}h^{11}a_{0,0}(s)\df s. 
\end{align*}
We consider equation \eqref{eq_transport_v_2} with $|\alpha|=0$,  on $\gamma$. This gives 
\begin{equation}\begin{aligned}
4\I\left(\frac{d}{dr}a_{2,0}+\frac{1}{2}\tr(H)a_{2,0}\right)=&-2\frac{d}{dr}(\Delta_g\psi)a_{1,0}-4\tr(H)\frac{d}{dr}a_{1,0}-(\tr(H))^2a_{1,0}-4\frac{d^2}{dr^2}a_{1,0} +2\I h^{1j} \p_ja_0 \\
&  \quad-\Delta_g(|\df\psi|^2_g)a_{1,0}-2\Delta_g a_1-h^{11}a _{1,0}+\I\Delta_g^2\psi a_{0,0} + 2\I \lr \df (\Delta_g\psi), \df a_0 \rn_g  \\ & \quad +2\I \tr(H) \Delta_g a_0   \quad+2\I \Delta_g(\lr \df\psi, \df a_0 \rn)+2\I \frac{d}{dr}(\Delta_ga_0) \quad+ \I(\Delta_h\psi) a_{0,0}.
\end{aligned}
\end{equation}
As above, we choose 
\begin{align}\label{!split_2}
    a_{2,0}(r)=c_{2,0}(r)+d_{2,0}(r),
\end{align}
where
\begin{align*}
c_{2,0}(r)&=e^{-\frac{1}{2}\int_{r_0}^r\tr(H)(s)\df s}\int_{r_0}^r\frac{\I}{4}e^{\frac{1}{2}\int_{r_0}^s\tr(H)\df \tilde{s}}\left(2\frac{d}{dr}(\Delta_g\psi)a_{1,0}+4\tr(H)\frac{d}{dr}a_{1,0}+(\tr(H))^2a_{1,0} \right.\\ & \left.\quad +4\frac{d^2}{dr^2}a_{1,0} + \Delta_g(|\df\psi|^2_g)a_{1,0}+2\Delta_g a_1+h^{11}a_{1,0}-2\I \lr \df (\Delta_g\psi), \df a_0 \rn_g \right.\\ & \left. \qquad- \I\Delta_g^2\psi a_{0,0} -2\I \tr(H) \Delta_g a_0 -2\I \Delta_g(\lr \df\psi, \df a_0 \rn)-2\I \frac{d}{dr}(\Delta_ga_0) \right)(s) \df s\\
d_{2,0}(r)&=e^{-\frac{1}{2}\int_{r_0}^r\tr(H)(s)\df s}\int_{r_0}^r\frac{1}{4}e^{\frac{1}{2}\int_{r_0}^s\tr(H)\df \tilde{s}}\left((\Delta_h\psi) a_{0,0} + 2h^{1j} \p_ja_0\right)(s)\df s.
\end{align*}
We consider equation \eqref{eq_transport_v_k} with $|\alpha|=0$, on $\gamma$. This gives \begin{equation}
\begin{aligned}
4\I\left(\frac{d}{dr}a_{3,0}+\frac{1}{2}\tr(H)a_{3,0}\right)=&-2\frac{d}{dr}(\Delta_g\psi)a_{2,0}-4\tr(H)\frac{d}{dr}a_{2,0}-(\tr(H))^2a_{2,0}-4\frac{d^2}{dr^2}a_{2,0} + 2\I h^{1j} \p_ja_1 \nonumber\\
& -\Delta_g(|\df \psi|^2_g)a_{2,0}-2\Delta_g a_2-h^{11}a _{2,0}+\I\Delta_g^2\psi a_{1,0} +2 \I \lr \df(\Delta_g\psi), \df a_1 \rn_g  \nonumber\\ & +2\I \tr(H) \Delta_g a_1  +2\I \Delta_g(\lr \df\psi, \df a_1 \rn)+2\I \frac{d}{dr}(\Delta_ga_1) + \I(\Delta_h\psi) a_{1,0} + P_{h,q}a_{0}.
\end{aligned}
\end{equation}
Choose
\begin{align}\label{!split_3}
    a_{3,0}(r)=c_{3,0}(r)+d_{3,0}(r)
\end{align}
with 
\begin{align*}
 c_{3,0}(r)&=e^{-\frac{1}{2}\int_{r_0}^r\tr(H)(s)\df s}\int_{r_0}^r\frac{\I}{4}e^{\frac{1}{2}\int_{r_0}^s\tr(H)\df \tilde{s}}\bigg(2\frac{d}{dr}(\Delta_g\psi)a_{2,0}+4\tr(H)\frac{d}{dr}a_{2,0}+(\tr(H))^2a_{2,0} + 4\frac{d^2}{dr^2}a_{2,0}  \\ & \left. \qquad +\Delta_g(|\df\psi|^2_g)a_{2,0}+2\Delta_g a_2+h^{11}a_{2,0}-2\I \lr \df(\Delta_g\psi), \df a_1 \rn_g - \I\Delta_g^2\psi a_{1,0} -2\I \tr(H) \Delta_g a_1 \right.\\ & \left. \quad \qquad -2\I \Delta_g(\lr \df\psi, \df a_1 \rn) -2\I \frac{d}{dr}(\Delta_ga_1) -\I (\Delta_h\psi) a_{1,0} -2\I h^{1j} \p_ja_1 \bigg)(s) \df s\right. \\
d_{3,0}(r) &=-e^{-\frac{1}{2}\int_{r_0}^r\tr(H)(s)\df s}\int_{r_0}^r\frac{\I}{4}e^{\frac{1}{2}\int_{r_0}^s\tr(H)\df \tilde{s}}  P_{h,q}a_{0}(s)\df s.   
\end{align*}
We constructed an approximate Gaussian beam of order $N$ of the form $U_\tau=e^{i(\tau\psi+\tau^4t)}a_\tau$, which  satisfies the following properties:
\begin{enumerate}
\item The equations (\ref{!eikonal}), and  (\ref{!transport_1}) are hold.
\item The phase  function $\psi$ satisfies $\Im(\psi)|_\gamma=0$, that is, the imaginary part of $\psi$ vanishes along $\gamma$.
\item There exists a constant $c\geq0$ such that $\Im(\psi)(r,y)\geq c|y|^2$ for all $(r,y)\in \Phi(W)$, where $(W, \Phi)$ is a local coordinate chart.
\end{enumerate}
\begin{proposition}\label{!GBProp}
Let $U_\tau=e^{i(\tau\psi+\tau^4t)}a_\tau$ be an approximate Gaussian beam of order $N$. Then for $\tau\gg1$ we have
\begin{equation}\label{!estimates}\norm{P_{h,q}U_\tau}_{H^s((0,T)\times W)}\lesssim\tau^{\frac{7-N}{2}+4s},\qquad\norm{U_\tau}_{H^s((0,T)\times W)}\lesssim \tau^{4s}.
\end{equation}
\end{proposition}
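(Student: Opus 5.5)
The plan is the standard Gaussian beam estimate, as in \cite{Dos_Jems}*{Section 3} and \cite{Kenig_Salo_Survey}, adapted to the fourth-order operator and the time scaling $\tau^4 t$. Start from the conjugation identity \eqref{!SU}:
\[
P_{h,q}U_\tau=e^{\I(\tau\psi+\tau^4t)}\Big(\tau^4(\mathcal{E}\psi)a_\tau-2\I\tau^3\mathcal{T}_1a_\tau-\tau^2\mathcal{T}_2a_\tau+\I\tau\mathcal{T}_3a_\tau+P_{h,q}a_\tau\Big).
\]
Substitute $a_\tau=\sum_{k\le N}\tau^{-k}a_k$ and group by powers of $\tau$. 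The coefficients of $\tau^{4},\tau^3,\dots,\tau^{4-N}$ are exactly the combinations $\mathcal{E}\psi\,a_k$, $2\I\mathcal{T}_1a_k+\mathcal{T}_2a_{k-1}-\I\mathcal{T}_3a_{k-2}-P_{h,q}a_{k-3}$, etc. By \eqref{!eikonal}, \eqref{!transport_1} and \eqref{eq_transport_v_1}--\eqref{eq_transport_v_k}, each of these vanishes to order $N$ in $y$ on $\gamma$. Hence each coefficient $c_m(t,r,y)$ is smooth, compactly supported in $(0,T)\times W$, and satisfies $|c_m|\lesssim |y|^{N+1}$. The remaining tail terms carry powers $\tau^{4-k}$ with $k>N$ and are handled trivially. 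The cutoff $\chi(|y|/\delta')$ produces terms supported where $|y|\ge\delta'/4$; there $\Im\psi\ge c|y|^2$ gives a factor $e^{-c\tau\delta'^2/16}$, which is $O(\tau^{-\infty})$.

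The key estimate, and the main point to check carefully, is the $L^2$ bound
\[
\int_0^T\!\!\int_W |y|^{2(N+1)}e^{-2\tau\Im\psi}\,dy\,dr\,dt\lesssim\int |y|^{2(N+1)}e^{-2c\tau|y|^2}dy\lesssim \tau^{-(N+1)-\frac{n-1}{2}}.
\]
This is obtained from the substitution $y=\tau^{-1/2}z$ together with property (3) of the beam. Multiplying by the worst prefactor $\tau^{4}$ (the $\tau^4\mathcal{E}\psi$ term) and taking square roots gives $\|P_{h,q}U_\tau\|_{L^2}\lesssim\tau^{4-\frac{N+1}{2}}=\tau^{\frac{7-N}{2}}$, discarding the favourable factor $\tau^{-(n-1)/4}$. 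Lower-order terms in $\tau$ are better. I expect the bookkeeping to be the main obstacle: one must confirm that every $\tau^{4-m}$ coefficient with $0\le m\le N$ indeed vanishes to order $N$ on $\gamma$, which amounts to checking that the hierarchy \eqref{eq_transport_v_1}--\eqref{eq_transport_v_k} matches \eqref{!SU} with the right signs. One also needs $|e^{\I\tau\psi}|=e^{-\tau\Im\psi}$ with $\psi$ real to first order on $\gamma$.

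For the $H^s$ norms, differentiate. Each $\partial_t$ falling on $e^{\I\tau^4 t}$ costs $\tau^4$. Each $\partial_x$ costs at most $\tau|\nabla\psi|\lesssim\tau$ or hits the amplitude, and $|\nabla_y\Im\psi|\lesssim|y|$ costs $\tau|y|\lesssim\tau^{1/2}$ after the Gaussian scaling. Thus $s$ derivatives cost at most $\tau^{4s}$ (for integer $s$; interpolate otherwise), and derivatives landing on $|y|^{N+1}$ factors lower the vanishing order but are compensated, yielding $\tau^{\frac{7-N}{2}+4s}$. The same computation without the vanishing factor gives $\|U_\tau\|_{L^2}\lesssim\tau^{-(n-1)/4}\le1$ since $a_\tau$ is bounded. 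Hence $\|U_\tau\|_{H^s}\lesssim\tau^{4s}$. If $\gamma$ self-intersects, apply the same estimates in each chart of Lemma \ref{lem:Fermi} and sum with a partition of unity, using property 4 to ensure the local pieces agree on overlaps.
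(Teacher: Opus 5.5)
Your argument is correct and is essentially the paper's proof. You combine \eqref{!SU} with the eikonal and transport conditions to bound $P_{h,q}U_\tau$ by $|e^{\I(\tau\psi+\tau^4 t)}|\big(\tau^4|y|^{N+1}+\text{lower-order tail}\big)$, charge $\tau^4$ per time derivative and $\tau$ per spatial derivative, and use $\Im\psi\geq c|y|^2$ with the scaling $y\mapsto\tau^{-1/2}y$ to obtain $\tau^{4-\frac{N+1}{2}+4s}=\tau^{\frac{7-N}{2}+4s}$, and $\tau^{4s}$ for $U_\tau$ itself. You also note that derivatives falling on the $|y|^{N+1}$ factor are compensated only after the Gaussian scaling, not pointwise as the paper's bound \eqref{!1} suggests; that is the right way to make the step rigorous, and it does not change the final estimate.
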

\begin{proof}
Since $\Im \psi$ satisfies $\Im(\psi)(r,y)\geq c|y|^2$, it follows that $|e^{i(\tau\psi+\tau^4 t)}|\leq Ce^{-\frac{1}{4}c\tau|y|^2}$ for small enough $y$. Therefore, using the smoothness and compact support of $a_\tau$, we obtain the estimate 
\[\begin{split}\norm{U_\tau}_{H^s((0,T)\times W)}\leq& C\tau^{4s}\norm{U_\tau}_{L^2((0,T)\times W)}\lesssim\tau^{4s}\norm{e^{-\frac{1}{4}c\tau|y|^2}\chi(|y|/\delta')}_{L^2((0,T)\times W)}=\mathcal{O}(\tau^{4s}),\end{split}\] 
for small enough $\delta'$. By substituting  \eqref{!eikonal} and \eqref{!transport_1} into the identity given by \eqref{!SU}, a similar deduction shows that
\begin{equation}\label{!1}|\p_z^\sigma P_{h,q}U_\tau|\lesssim\tau^{|\sigma|}|e^{i(\tau\psi+\tau^4 t)}|\Big(C_0\tau^4|y|^{N+1}+C_1\tau^{-N}\Big), \quad \mbox{ for any } \sigma\in\N^{n}.
\end{equation}
This further implies 
\begin{equation}\label{!2}
|\p_t^m P_{h,q}U_\tau|\lesssim \tau^{4m}|e^{i(\tau\psi+\tau^4t)}|\Big(C_0\tau^4|y|^{N+1}+C_1\tau^{-N}\Big).
\end{equation} 
Finally, from \eqref{!1} and \eqref{!2}, we have 
\[\begin{split}\norm{P_{h,q}U_\tau}_{H^{s}((0,T)\times W)}\lesssim\tau^{4s}\norm{e^{-\frac{1}{4}c\tau|y|^2}(\tau^4|y|^{N+1}+\tau^{-N})}_{L^2((0,T)\times W)}= \mathcal{O}(\tau^{\frac{7-N}{2}+4s}).
\end{split}\]
\end{proof}
\tblue{To obtain exact solutions to \eqref{!LS} from the approximate Gaussian beams, we introduce an appropriate remainder term. Writing the exact solution as $u = U_\tau + R_\tau$, we require that the remainder satisfies}
\begin{align}\label{eq_rmdr}
&P_{h,q}R_\tau = -P_{h,q}U_\tau \ \mbox{in} \ (0,T)\times M \nonumber\\
&R_\tau|_{t=0} = 0.
\end{align}
Moreover, by Proposition \ref{!GBProp} we get
\begin{align}
\|R_\tau\|_{H^s((0,T)\times M)} \lesssim \|P_{h,q}U_\tau\|_{H^s((0,T)\times M)} \lesssim \tau^{\frac{7-N}{2}+4s}.
\end{align}
\subsection{Boundary source determination}\label{subsec:boundary_source_determine}
\begin{lemma}\label{lm:determining_amplitudes}
Let $\Lc_{h_{1},q_1}= \Lc_{h_{2}, q_2}$ in $ (0,T) \times \Gamma$, and let $U^{(j)}_{\tau}= e^{\I (\tau \psi +\tau^4 t)} a_\tau^{(j)}$ be an approximate Gaussian beam solution of \eqref{!LS} concentrate near a geodesic $\gamma$.  Then for each integer $k\ge 0$, the amplitudes $a^{(1)}_k=a^{(2)}_k$ on $\Gamma \cap \gamma$  up to higher order terms.
\end{lemma}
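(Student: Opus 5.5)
We will follow the strategy of \cite[Lemma 3.1]{LOSST_NLS_2025} (and Lemma \ref{e_lemma} in the Euclidean case): use the adjoint identity \eqref{adjoint} to turn equality of the linear source-to-solution maps into an orthogonality relation, then read off the amplitudes order by order in $\tau$. The key observation is that the coefficients $h_\ell,q_\ell$ are compactly supported in $(0,T)\times M\setminus\Gamma$. So on $(0,T)\times\Gamma$ the two operators $P_{h_1,q_1}$ and $P_{h_2,q_2}$ both coincide with $\I\p_t+\Delta_g^2$. Hence the transport equations \eqref{eq_transport_v0}--\eqref{eq_transport_v_k} are identical for $\ell=1,2$ along the portion of $\gamma$ lying in $\Gamma$ before $\gamma$ enters $\supp(h_\ell)\cup\supp(q_\ell)$. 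Since we use the same phase $\psi$ and the same initial data ($a_{0,0}(r_0)=c_0$, $a_{k,j}(r_0)=0$) at the entry point $r_0\in\p M$, uniqueness for these linear ODEs gives $a^{(1)}_{k,j}=a^{(2)}_{k,j}$ on the initial segment of $\gamma\cap\Gamma$.

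The remaining part of $\gamma\cap\Gamma$ is the exit segment near $\p M$, after $\gamma$ has passed through $M\setminus\Gamma$. There the coefficients again vanish, but the amplitudes have already absorbed information about $h_\ell,q_\ell$. To handle it, take $f_1=P_{h_1,q_1}(\eta u^{(1)})$ with $\eta\in C_0^\infty(M)$, $\eta=1$ on $M\setminus\Gamma$. Since $\Lc_{h_1,q_1}f_1=\Lc_{h_2,q_2}f_1$ on $(0,T)\times\Gamma$, the solution $v$ of the $(h_2,q_2)$-problem with source $f_1$ agrees with $\eta u^{(1)}$ on $(0,T)\times\Gamma$. Pair this with a backward Gaussian beam $w=W_\tau+R'_\tau$ for $P_{h_2,q_2}$ along the reversed geodesic, built on the same $\psi$ and cut off by $\tilde\eta$, and use \eqref{adjoint}. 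Subtracting the two versions gives
\[
\int_{(0,T)\times\Gamma}\bigl(\eta u^{(1)}-\eta u^{(2)}\bigr)\,\overline{h}\,\df x\,\df t=0 ,
\]
where $h=P_{h_2,q_2}(\tilde\eta w)$ is supported where $\nabla\tilde\eta\neq0$, i.e.\ in $\Gamma$. Choosing the cutoffs so that $\supp\nabla\tilde\eta$ meets $\gamma$ only near a chosen point of the exit segment, we then substitute $u^{(\ell)}=U^{(\ell)}_\tau+R^{(\ell)}_\tau$. The remainders are controlled by Proposition \ref{!GBProp} and the energy estimate for \eqref{eq_rmdr}: taking $N$ large makes them $\mathcal O(\tau^{-K})$ for any prescribed $K$.

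In the product $U^{(\ell)}_\tau\overline{W_\tau}$ the real phases cancel, leaving a Gaussian factor $e^{-2\tau\Im\psi}$. Stationary phase (Laplace's method) in the transversal variable $y$ gives an expansion in powers of $\tau^{-1/2}$ whose leading coefficient is an integral of $(a^{(1)}_{0,0}-a^{(2)}_{0,0})\overline{w_{0,0}}$ against the cutoff derivatives along $\gamma$. Because $\phi(t)$ and the cutoff localization along $\gamma$ are arbitrary, this forces $a^{(1)}_{0,0}=a^{(2)}_{0,0}$ along the exit segment of $\gamma\cap\Gamma$. Both amplitudes solve the same homogeneous ODE there, so equality at one point propagates along the segment. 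The Taylor coefficients $a_{0,j}$ are then recovered by inductive use of the next orders, and the $a_k$ by induction on $k$: once $a^{(\ell)}_{k'}$ agree for $k'<k$, the $\tau^{-k}$ coefficient isolates $a^{(1)}_{k}-a^{(2)}_{k}$. This yields the statement ``up to higher order terms'', i.e.\ modulo $\mathcal O(|y|^{N+1})$, and modulo the $\tau^{-1/2}$ losses from the Gaussian.

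The main obstacle is this last bookkeeping. In the Euclidean case all $\tau$-orders separate cleanly. Here the Gaussian integration mixes the $y$-Taylor coefficients $a_{k,j}$ with the powers $\tau^{-j/2}$, so one has to order the unknowns by total weight $k+j/2$ and check that at each weight the new unknown appears with a nonzero coefficient. That coefficient is a Gaussian moment times $\overline{w_{0,0}}\neq0$. We would first treat $j=0$ (values on $\gamma$), which is all that Section \ref{sec:main_result_thm_1.2} needs, and treat the higher $j$ only to the extent required.
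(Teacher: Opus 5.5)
Your skeleton matches the paper's proof (Steps 1--2): the entry part of $\gamma$ is handled by uniqueness for the transport ODEs, and the exit part by the duality identity against a backward beam for $P_{h_2,q_2}$, Laplace's method in $y$ after $y\mapsto y/\sqrt{\tau}$, and localization of $\partial_r\tilde\eta$. A few repairs are needed there.
\begin{itemize}
\item The backward beam must carry the same phase $e^{i(\tau\psi+\tau^4t)}$ along the same geodesic, not the reversed one; otherwise $e^{-2\tau\Im\psi}$ does not appear.
\item Subtracting the two duality identities gives $\langle\eta(u^{(1)}-u^{(2)}),\tilde f\rangle=\langle f_1-f_2,\tilde\eta w\rangle$, with $\tilde f=P_{h_2,q_2}(\tilde\eta w)$ (your $h$, which clashes with the metric). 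The right-hand side drops only if $\eta\equiv1$ near $\supp\tilde\eta$, which is what the paper chooses.
\item Since $\tilde\eta=1$ on $M\setminus\Gamma$, $\nabla\tilde\eta$ cannot avoid the entry part of $\gamma$. Let $\tilde\eta$ tend to the indicator of $[r_1,r_3]$ and discard the $r_1$-contribution using your first paragraph.
\item $a_0$ needs no recovery: $\mathcal{T}_1a_0=0$ does not involve $h,q$, so $a_0^{(1)}=a_0^{(2)}$ everywhere.
\end{itemize}

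The genuine gap is the transverse Taylor coefficients $a_{k,j}$ with $j\ge1$. Your mechanism (at each weight $k+j/2$ the new unknown appears with coefficient ``a Gaussian moment times $\overline{w_{0,0}}$'') does not work. With a test amplitude that is nonzero on $\gamma$, the polynomial $y\mapsto(a^{(1)}_{k,j}-a^{(2)}_{k,j})(r_3,y)$ is seen only through $\int e^{-\Im H(r_3)y\cdot y}(\cdot)\,dy$. This vanishes identically for odd $j$. For even $j\ge2$ and $n\ge3$ it is a single functional on a space of dimension $\binom{n+j-2}{j}>1$. Moreover, several unknowns share each weight, e.g.\ $a_{1,2}$ and $a_{2,0}$.

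You also cannot stop at $j=0$, for three reasons:
\begin{itemize}
\item $a_{1,2}$ enters at the same order as $a_{2,0}$, and $a_{1,1}$ does too, through odd Taylor terms of the test amplitude and the cubic part of $\Im\psi$.
\item The transport ODE for $a_{K,0}$ on the exit segment is homogeneous only once $a_{K-1,2}$ is known, since it appears in $\Delta_g a_{K-1}|_\gamma$.
\item The proof of Theorem \ref{th:Manifold} uses the lemma to say that the sources $f_j^\tau$ agree up to $\mathcal{O}(\tau^{-K})$. That requires the beams to agree on a whole tubular neighbourhood of $\gamma\cap\Gamma$.
\end{itemize}

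The missing idea is the paper's Step 3: vary the transverse profile of the test beam. Prescribe its Taylor data at $r_0$ so that its amplitude vanishes to order $m$ on $\gamma$, with leading part $y^\beta$ transported along $\gamma$. Then, at relative order $\tau^{-w-m/2}$, the weight-$w$ unknowns $P$ are paired via $\int e^{-\Im H y\cdot y}P(y)\overline{Q_\beta(y)}\,dy$. Here $Q_\beta$ has top-degree part equal to the transported $y^\beta$; its lower-degree parts come from $\tau^{-k}w_{k}$. This pairing is a positive definite inner product on polynomials, and the $Q_\beta$ span all polynomials of the relevant degrees, so the linear system at each weight is invertible.

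Your ordering by $k+j/2$ is the right bookkeeping here: against a degree-two test, $a_{K,2}$ and $a_{K+1,0}$ already enter at the same order. But the argument only closes once you combine that ordering with test beams of every transverse degree.
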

\begin{remark}
We follow the argument similar to one used in \cite{LOSST_NLS_2025}. However, one could also use a different argument based on the stationary phase lemma; see \cite[Theorem 3.1]{simon2024gaussianbeamsinverseproblems}.
\end{remark}
\begin{proof}
The proof is based on induction on $k$.  Fix a geodesic $ \gamma$ and let $(r,y)$ be the Fermi coordinates in a tubular neighbourhood of $\gamma$. Let $\ell$ be the length of the geodesic $\gamma$ and we choose $0<r_1<r_2<\ell$ such that $(r,y)\in \Gamma$ when $r<r_1$ or $r>r_2$.  Now proving Lemma \ref{lm:determining_amplitudes} is equivalent to proving the following: for any integer $M\ge 0$ and for all integers $k\ge 0$
\begin{align}\label{induction_argument}
(\PD^{\alpha}_{y} a_k^{(1)})(r, 0)=  (\PD^{\alpha}_{y} a_k^{(2)})(r, 0) \ \mbox{ for $r<r_1$ or $r>r_2$, and  multi-indices $|\alpha|=M$.}
\end{align}
We divide the proof into several steps. 
 \smallskip
    
    \subsubsection*{\textbf{Step 1}} 
We show that 
	$ (\PD^{\alpha}_{y} a_0^{(1)})(r, 0)=  (\PD^{\alpha}_{y} a_0^{(2)})(r, 0) \ \mbox{for any multi-index $|\alpha|=M$.}$\smallskip \\
	For $\abs{\alpha}=0$ we have that $ a^{(1)}_0-a^{(2)}_0$ satisfies the transport equation 
	\[  \mathcal{T}_1(a^{(1)}_0-a^{(2)}_0)=0 \quad \mbox{on $\gamma$ and  $(a^{(1)}_0-a^{(2)}_0)(0)=0$.}\]
	As $a^{(1)}_0-a^{(2)}_0 = 0$ at $\gamma(0)$,
	this implies $(a^{(1)}_0-a^{(2)}_0)(r,0)=0$ by unique solvability of ordinary differential equations. Suppose that 
	\begin{align}\label{eq_induction_1}
		(\PD^{\alpha}_{y} a_0^{(1)})(r, 0)=  (\PD^{\alpha}_{y} a_0^{(2)})(r, 0) \ \mbox{for any multi-index $|\alpha| \le M$. }
	\end{align}
	As $\mathcal T_1a^{(j)}_0$ vanishes to a high order on $\gamma$,
	it follows from \eqref{eq_induction_1} that
	for a multi-index $|\alpha| = M + 1$ there holds
	\begin{align}
		\mathcal T_1\p_y^\alpha (a^{(1)}_0-a^{(2)}_0) = 0 \ \text{on $\gamma$}.
	\end{align}
	As $\p_y^\alpha (a^{(1)}_0-a^{(2)}_0) = 0$ at $\gamma(0)$, we see that 
	\eqref{eq_induction_1} holds for $|\alpha| = M + 1$. This completes the induction argument.
	
	\subsubsection*{\textbf{Step 2}} Here  we show that \eqref{induction_argument} holds true for $k=K$, and $ |\alpha|=0$. We argue by induction and assume that \eqref{induction_argument} is true for all $k\le K-1$. 
	Let $\eta \in C_c^{\infty}(M)$ such that $\eta=1$ in $M\setminus\Gamma$.
	We set $f_j=(\I \PD_t +\Delta^2_g +\Delta_{h_{j}} + q_j)(\eta u_j)$, 
	where $u_j$ solves 
	\begin{equation}
		\begin{cases}
			(\I \PD_t +\Delta^2_g +\Delta_{h_{j}} + q_j)u_j=0\quad &\mbox{in} \quad (0,T) \times M\\
			u_j=0 \quad & \mbox{at} \quad t=0,
		\end{cases}
	\end{equation}
	and coincides with the approximate Gaussian beam 
	\begin{align}
		U^{(j)}_{\tau}= e^{\I (\tau \psi(r,y)+ \tau^4 t)} a_{\tau}^{(j)}(t,r,y)= e^{\I (\tau \psi(r,y)+ \tau^4 t)} \sum_{k=0}^N\, \tau^{-k}a_k^{(j)} (t,r,y)
	\end{align}
	up to a small error of order $\mathcal{O}(\tau^{\frac{7-N}{2}})$ in the sense of $L^2$.
	Clearly,  the function $ \eta u_j$ satisfies 
	\begin{align}
		\begin{cases}
			(\I \PD_t +\Delta^2_g +\Delta_{h_{j}} + q_j)u_j=f_j\quad &\mbox{in} \quad (0,T) \times M\\
			u_j= {\p_{\nu_g} u_j}=0 \quad & \mbox{on} \quad \PD M\\
			u_j=0 \quad & \mbox{at} \quad t=0,
		\end{cases}
	\end{align}
	with sources $f_j$ supported in $(0,T) \times \Gamma$. 
	Further, let us consider a solution of 
	\begin{align}
		\begin{cases}
			(\I \PD_t +\Delta^2_g +\Delta_{h_{2}} + q_2)w=0\quad &\mbox{in} \quad (0,T) \times M\\
			w=0 \quad & \mbox{at} \quad t=T.
		\end{cases}
	\end{align}
	that coincides with the approximate Gaussian beam
	\begin{align}
		e^{\I (\tau \psi(r,y)+ \tau^4 t)} \sum_{k=0}^N\, \frac{w_k (t,r,y)}{\tau^k},
	\end{align}
	up to a small error in the same sense.
	
	Let $\tilde \eta \in C_c^{\infty}(M)$ such that $\tilde \eta=1$ in $M\setminus\Gamma$.
	We set $\tilde{f}=(\I \PD_t +\Delta^2_g +\Delta_{h_{2}} + q_2)(\tilde \eta w)$. Analogously to, we have
	\begin{align}\label{eq_1.7}
		\lr (\Lc_{h_{j},q_j}-\Lc_{h_{2},q_2})f_j,\tilde{f}\rn_{L^2((0,T)\times\Gamma)}= \lr \eta u_j,\tilde{f}\rn_{L^2((0,T)\times\Gamma)} - \lr f_j,\Tilde{\eta} w\rn_{L^2((0,T)\times\Gamma)}.   
	\end{align}
	Subtracting \eqref{eq_1.7}
	for $j=1$ from \eqref{eq_1.7} for $j=2$ we obtain
	\begin{equation}\label{integral_identity}
		\begin{aligned}
			0=& \lr \eta (u_1-u_2),\tilde{f}\rn_{L^2((0,T)\times\Gamma)} - \lr f_1-f_2,\Tilde{\eta} w\rn_{L^2((0,T)\times\Gamma)}\\
			0=  &\agl[ e^{\I (\tau \psi(r,y)+ \tau^4 t)} \eta\tau^{-K}( a^{(1)}_{K}-a^{(2)}_K), e^{\I (\tau \psi(r,y)+ \tau^4 t)} 4(\I \tau)^3|\df\psi|_g^2\lr \df\psi,\df\tilde{\eta}\rn_g \,w_0+\mathcal{O}(\tau^2)]_{L^2((0,T)\times\Gamma)}\\&- \agl[ 4(\I \tau)^3|\df\psi|_g^2\lr \df\psi,\df {\eta}\rn_g e^{\I (\tau \psi(r,y)+\tau^4 t)} \tau^{-K} ( a^{(1)}_{K}-a^{(2)}_K) + \mathcal{O}(\tau^{2}) , e^{\I (\tau \psi(r,y) +\tau^4 t)}\Tilde{\eta} w_0]_{L^2((0,T)\times\Gamma)}.
		\end{aligned}
	\end{equation}
	We next consider the coefficient of $\tau^{-K+3}$ from the above integral identity and simplify it. To this end, recall that we  choose $0 < r_1 < r_2 < \ell$ such that in Fermi coordinates $(r,y) \in \Gamma$ when $r < r_1$ or $r > r_2$ and $\eta(r,y) = \tilde \eta(r,y) = 0$ when $r > \ell$. Let $I$ be the coefficient of $\tau^{-K+3}$ in the above expansion. Since $\eta$ and $\tilde{\eta}$ are real valued and $\mathcal{T}(\cdot)= |\df\psi|_g^2\lr \df\psi,\df{\cdot}\rn_g$ is a complex valued function, this implies
	\begin{align}
		I&= 4\I \int_0^T \int_{\Gamma} e^{-2\tau \Im \psi} (\eta  \:\overline{\mathcal{T} \Tilde{\eta}}+ \Tilde{\eta}\mathcal{T}\eta) w_0 \, ( a^{(1)}_{K}-a^{(2)}_K) \df v_g\,\df t \nonumber\\
		&=4\,\I \int_0^T  \int_0^{r_1} \int_{\R^{n-1}} e^{-2\tau \Im \psi} (\eta  \:\overline{\mathcal{T} \Tilde{\eta}}+ \Tilde{\eta}\mathcal{T}\eta) w_0 \, ( a^{(1)}_{K}-a^{(2)}_K) \sqrt{|g|}\, \df r\,\df y\,\df t \notag\\& \quad+4\,\I \int_0^T  \int_{r_2}^{\ell}\int_{\R^{n-1}} e^{-2\tau \Im \psi} (\eta  \:\overline{\mathcal{T} \Tilde{\eta}}+ \Tilde{\eta}\mathcal{T}\eta) w_0 \, ( a^{(1)}_{K}-a^{(2)}_K) \sqrt{|g|}\, \df r\,\df y\,\df t.
	\end{align}
	Since $ a_K^{(1)}= a_K^{(2)}$ up to higher order before the geodesic enters the domain $M\setminus\Gamma$, that is when $r<r_1$, there holds \[ \int_0^T  \int_0^{r_1} \int_{\R^{n-1}} e^{-2\tau \Im \psi} (\eta  \:\overline{\mathcal{T} \Tilde{\eta}}+ \Tilde{\eta}\mathcal{T}\eta) w_0 \, ( a^{(1)}_{K}-a^{(2)}_K) \sqrt{|g|}\, \df r\,\df y\,\df t =\mathcal{O}(|y|^{\infty}).\] 
	This further entails, up to $\mathcal{O}(|y|^{\infty})$
	\begin{align*}
		I= 4\,\I \int_0^T  \int_{r_2}^{\ell}\int_{\R^{n-1}} e^{-2\tau \Im \psi} (\eta  \:\overline{\mathcal{T} \Tilde{\eta}}+ \Tilde{\eta}\mathcal{T}\eta) w_0 \, ( a^{(1)}_{K}-a^{(2)}_K) \sqrt{|g|}\, \df r\,\df y\,\df t.
	\end{align*}
	Observe that, in the Fermi coordinates $(r,y)$  we have the following expression for $ \psi$:
	\[\psi(r,y)= r +\frac{1}{2} H(r)y\cdot y+ \mathcal{O} (|y|^3)  \implies \mathrm{Im}(\psi) = \mathrm{Im}(\frac{1}{2} H(r)y\cdot y+ \mathcal{O} (|y|^3) ).\] 
	We next perform the change of variable $ y\rightarrow y/\sqrt{\tau}$ and obtain 
	\begin{align*}
		I=  &\frac{4\I}{\tau^{(n-1)/2}} \int_0^T  \int_{r_2}^{\ell}\int_{\R^{n-1}} e^{-\Im H(r) y\cdot y+\tau^{-1/2} \mathcal{O}(|y|^3)} (\eta  \:\overline{\mathcal{T} \Tilde{\eta}}+ \Tilde{\eta}\mathcal{T}\eta)(r,y/\sqrt{\tau})\\& \qquad\qquad\qquad\qquad\times w_0(t,r,y/\sqrt{\tau}) \, ( a^{(1)}_{K}-a^{(2)}_K)(r,y/\sqrt{\tau}) \sqrt{|g|}(r,y/\sqrt{\tau})\, \df y\, \df r\,\df t\\
		= &\frac{4\I}{\tau^{(n-1)/2} } \int_0^T  \int_{r_2}^{\ell} \frac{1}{\sqrt{\det\Im(H(r))}}\,\int_{\R^{n-1}} e^{- |y|^2-\tau^{-1/2} \mathcal{O}(|y|^3)} (\eta  \:\overline{\mathcal{T} \Tilde{\eta}}+ \Tilde{\eta}\mathcal{T}\eta)(r,(\cdot)/\sqrt{\tau})\\& \qquad\qquad\qquad\qquad\times w_0(t,r,\cdot/\sqrt{\tau}) \, ( a^{(1)}_{K}-a^{(2)}_K)(r,\cdot/\sqrt{\tau}) \sqrt{|g|}(r,\cdot/\sqrt{\tau})\, \df y\, \df r\,\df t.
	\end{align*}
	We multiply \eqref{integral_identity} by $\tau^{K-3}$ and  let $ \tau \rightarrow \infty$ to conclude
	\begin{align*}
		0= &4\I\int_0^T  \int_{r_2}^{\ell} \frac{1}{\sqrt{\det\Im(H(r))}}\,\int_{\R^{n-1}} e^{- |y|^2} (\eta  \:\overline{\mathcal{T} \Tilde{\eta}}+ \Tilde{\eta}\mathcal{T}\eta)(r,0)\\& \qquad\times w_0(t,r,0) \, ( a^{(1)}_{K}-a^{(2)}_K)(r,0)  \df y\, \df r\,\df t.
	\end{align*}
	Next choosing 
	$w_0(t,r,0)=\phi_1(t)\, \phi_2(r)$ for some $C^{\infty}_c$ functions in their respective variables, from above we deduce that,
	\begin{align*}
		0= 4\I \int_{r_2}^\ell \frac{1}{\sqrt{\det(\Im H(r))}} \, \phi_2(r)(\eta  \:\overline{\mathcal{T} \Tilde{\eta}}+ \Tilde{\eta}\mathcal{T}\eta)(r,0)\, ( a^{(1)}_{K}-a^{(2)}_K)(r,0) \df r.
	\end{align*}
	Further choosing $\eta=1$ in the support of $\Tilde{\eta}$ we obtain
	\begin{align*}
		0=& 4\I \int_{r_2}^\ell \frac{1}{\sqrt{\det(\Im H(r))}} \, \phi_2(r)\overline{\mathcal{T} \Tilde{\eta}}(r,0) \, ( a^{(1)}_{K}-a^{(2)}_K)(r,0) \df r\\
		=&4\I \int_{r_2}^\ell \frac{1}{\sqrt{\det(\Im H(r))}} \, \phi_2(r)  \PD_r\Tilde{\eta}\, ( a^{(1)}_{K}-a^{(2)}_K)(r,0) \df r.
	\end{align*}
	Let $r_3 \in (r_2, \ell)$. 
	We choose $\Tilde{\eta} \in C_c^{\infty}$ such that $\Tilde{\eta}$ converges to the indicator function of $[r_1, r_3]$ near $\gamma$. Then $ \PD_r \Tilde{\eta}$ will converge to $ \delta_{r_1}-\delta_{r_3}$, and
	\begin{align*}
		\frac{\phi_2(r_3)}{{\sqrt{\det(\Im H(r_3))}}}\,(a^{(1)}_{K}-a^{(2)}_K)(r_3,0)=0.
	\end{align*} 
	Since $ \phi_2$ solves some transport equation with non-zero initial condition on $\gamma$, this implies $\phi_2(r_3)\neq 0$, and  $ \Im H(r)$ is positive definite entails that $ \det(\Im H)(r_3)\neq 0$. This along with preceding equation implies $ (a^{(1)}_{K}-a^{(2)}_K)(r_3,0)=0$. 
	We have shown \eqref{induction_argument} in the case $k = K$ and $\alpha = 0$.
     \subsubsection*{\textbf{Step 3}}  We show that $(a^{(1)}_{K,\alpha}-a^{(2)}_{K,\alpha})(r,0)=0$  for any multi-index $ \alpha$ with $ |\alpha|=p$, where $a^{(j)}_{K}(r,y) = \sum_{\alpha} a^{(j)}_{K,\alpha}(r) y^\alpha$. The proof is by induction in $|\alpha| = p$, and suppose that the claim holds for $p$. Consider a multi-index $\alpha$ such that $|\alpha| = p + 1$. Now, we consider the Gaussian beam solution of 
	\begin{align}
		\begin{cases}
			(\I \PD_t +\Delta^2_g +\Delta_{h_{2}} + q_2)w=0\quad &\mbox{in} \quad (0,T) \times M\\
			w=0 \quad & \mbox{at} \quad t=T
		\end{cases}
	\end{align}
	that coincides with the approximate Gaussian beam
	\begin{align}
		e^{\I (\tau \psi(r,y)+ \tau^4 t)} \sum_{k=0}^N\, \frac{w_k (t,r,y)y^{\beta}}{\tau^k}, \ \mbox{where} \ |\beta| = p + 1
	\end{align}
	up to a small error in the same sense.
	We argue as in \emph{\textbf{Step~2}} to obtain 
	\begin{align}
		I &= 4\,\I \int_0^T  \int_{r_2}^{\ell}\int_{\R^{n-1}} e^{-2\tau \Im \psi} (\eta  \:\overline{\mathcal{T} \Tilde{\eta}}+ \Tilde{\eta}\mathcal{T}\eta) w_{0} y^\beta (a^{(1)}_{K,\alpha}-a^{(2)}_{K,\alpha})(r)\, y^{\alpha} \sqrt{|g|}\, \df r\,\df y\,\df t \\&\quad + \mathcal{O}(|y|^{2p+3}). 
	\end{align}
	We now proceed exactly as above  to conclude
	\begin{align}
		I= &\frac{4\I }{\tau^{(n-1)/2} }\tau^{-(2p+2)} \int_0^T  \int_{r_2}^{\ell} \frac{1}{\sqrt{\det\Im(H(r))}}\,\int_{\R^{n-1}} e^{- |y|^2-\tau^{-1/2} \mathcal{O}(|y|^3)} (\eta  \:\overline{\mathcal{T} \Tilde{\eta}}+ \Tilde{\eta}\mathcal{T}\eta)(r,(\cdot)/\sqrt{\tau})\\& \qquad\times w_{0}(t,r)\, y^\beta  \, ( a^{(1)}_{K,\alpha}-a^{(2)}_{K,\alpha})(r)\,y^{\alpha}\,\sqrt{|g|}(r,\cdot/\sqrt{\tau})\, \df y\, \df r\,\df t + \mathcal{O}(\tau^{-(2p+3)/2}).
	\end{align}
	Multiplying above by $\tau^{K+2p-1}$
	and letting $\tau \rightarrow \infty$ we conclude that
	\begin{align}
		0= 4\I \int_0^T  \int_{r_2}^{\ell} \frac{1}{\sqrt{\det\Im(H(r))}}\,&\int_{\R^{n-1}} e^{- |y|^2} y^\beta y^{\alpha}\, ( a^{(1)}_{K,\alpha}-a^{(2)}_{K,\alpha})(r)\, \\& \times(\eta  \:\overline{\mathcal{T} \Tilde{\eta}}+ \Tilde{\eta}\mathcal{T}\eta)(r,(\cdot)/\sqrt{\tau})\, w_{0}(t,r) \, \df y\, \df r\,\df t.
	\end{align}
	Further choosing $w_{0}(t,r,0)=\phi_1(t)\, \phi_2(r)$ for some $C^{\infty}_c$ functions in their respective variables, from above we deduce that,
	\begin{align}
		0=\int_{\R^{n-1}} e^{- |y|^2} y^{\beta} y^{\alpha}\,\df y\times
		4\I \int_{r_2}^\ell \frac{1}{\sqrt{\det(\Im H(r))}} \,  ( a^{(1)}_{K,\alpha}-a^{(2)}_{K,\alpha})(r) \phi_2(r)(\eta  \:\overline{\mathcal{T} \Tilde{\eta}}+ \Tilde{\eta}\mathcal{T}\eta)(r,0)\,  \df r.    
	\end{align}
	This, along with Lemma \cite{LOSST_NLS_2025} implies 
	\begin{align}
		0= \int_{r_2}^\ell \frac{1}{\sqrt{\det(\Im H(r))}} \,  ( a^{(1)}_{K,\alpha}-a^{(2)}_{K,\alpha})(r) \phi_2(r)(\eta  \:\overline{\mathcal{T} \Tilde{\eta}}+ \Tilde{\eta}\mathcal{T}\eta)(r,0)\,  \df r.    
	\end{align}
	Next, proceed similarly as in \textbf{Step 2}, one can conclude that 
	\begin{align}
		( a^{(1)}_{K,\alpha}-a^{(2)}_{K,\alpha})(r)=0 \quad \mbox{for } \,\,\abs{\alpha}=p+1.  
	\end{align}
	This completes the induction step as well as the proof of Lemma \ref{lm:determining_amplitudes}.
	\end{proof}
    \vspace{-4mm}
\section{Proof of main result: Geometric case}\label{sec:main_result_thm_1.2}
\noindent This section is devoted to proving the main result of this article in the geometric setting. The proof is divided into several steps.
\begin{proof}[Proof of Theorem \ref{th:Manifold}]\tblue{As in the Euclidean case, we again divide the proof into several steps.}
\vspace{-3mm}
\subsubsection*{\textbf{Step 1 (Set-up)}}  
\tblue{Recall that $M$ is an admissible manifold (Definition \ref{def_admissible}). We fix a point $p\in M\setminus\Gamma$ and a constant $\lambda>0$, setting $\lambda'^4:=1-\lambda^4$. In accordance with Definition \ref{def_admissible}, let $\xi_0,\xi_1,\xi_2$ be chosen such that $\xi_0:=\lambda'\xi_1+\lambda\xi_2$. For each $j=0,1,2$, we establish Fermi coordinates along the curve $\gamma_{p,\xi_j}$, centered at $p$, that is, $p=(0,0\cdots,0)$ in these coordinates. Let $U_{j,\tau}$ be an approximate Gaussian beam constructed along $\gamma_{p,\xi_j}$ in these coordinates. The corresponding exact solution to the fourth-order Schrödinger equation is then given by $u_{j,\tau}=U_{j,\tau}+R_{j,\tau}$, where the remainder term $R_{j,\tau}$ is constructed according to \eqref{eq_rmdr}.}

\noindent For $j =0,1,2$, we choose a sufficiently large natural number $N\in\N$ to define the amplitude functions $a_k^{(j)}$ and $b_k^{(j)}$ for the pairs of coefficients $(h_{1},q_{1})$ and $(h_{2},q_{2})$, respectively as follows. For $\tau>0$, we can define the approximate geometric optics solutions $U^{(1)}_{j,\tau}(t,r,y)$ and $U^{(2)}_{j,\tau}(t,r,y)$ for the pairs of coefficients $(h_{1},q_{1})$ and $(h_{2},q_{2})$, respectively, in the following form:
\begin{align*}
U^{(1)}_{j,\tau}(t,r,y)=e^{i(\tau\psi^{(j)}(r,y)+\tau^4 t)}\Bigg(\sum_{k=0}^N\tau^{-k}a_k^{(j)}(t,r,y)\Bigg),\\
U^{(2)}_{j,\tau}(t,r,y)=e^{i(\tau\psi^{(j)}(r,y)+\tau^4 t)}\Bigg(\sum_{k=0}^N\tau^{-k}b_k^{(j)}(t,r,y)\Bigg).
\end{align*} 
Further, for each $U^{(\ell)}_{j,\tau}$, we can define the corresponding exact solution $u^{(\ell)}_{j,\tau}=U^{(\ell)}_{j, \tau}+R^{(\ell)}_{j,\tau}$, where $R^{(\ell)}_{j,\tau}$ satisfies \eqref{eq_rmdr} for $\ell=1,2$. We again let $\eta\in C^\infty_0(M)$ be such that $\eta=1$ in $M\setminus\Gamma$, and consider the sources $f_{j}^\tau=\left(\I \PD_t +\Delta^2_g + \Delta_{h_{\ell}} + q_{\ell}\right)(\eta u^{(\ell)}_{j,\tau})$. Then, for $j=1,2$ the function $\eta u^{(\ell)}_{j,\tau}$ solves
\begin{align}
\left\{\begin{array}{rll}
\I \PD_t \mathcal{U}^{(\ell)}_{j,\tau}  +\Delta^2_g \mathcal{U}^{(\ell)}_{j,\tau}+ \Delta_{h_{\ell}}\mathcal{U}^{(\ell)}_{j,\tau} + q_{\ell}\mathcal{U}^{(\ell)}_{j,\tau} = f_{j}^\tau & \text{in $(0,T) \times M$,}\\
\mathcal{U}^{(\ell)}_{j,\tau}|_{x \in \partial M}={\p_{\nu_g} \mathcal{U}^{(\ell)}_{j,\tau}}|_{x \in \partial M} = 0, & \text{on $(0,T) \times \partial M$,}\\
\mathcal{U}^{(\ell)}_{j,\tau}|_{t = 0} = 0 & \text{in $M$}.
\end{array}\right.
\end{align} 
For $j=0$, the function $\tilde{\eta} u^{(\ell)}_{0,\tau}$ solves
\begin{align}
\left\{\begin{array}{rll}
\I \PD_t \mathcal{U}^{(\ell)}_{0,\tau}  +\Delta^2_g \mathcal{U}^{(\ell)}_{0,\tau}+ \Delta_{h_{\ell}}\mathcal{U}^{(\ell)}_{0,\tau} + q_{\ell}\mathcal{U}^{(\ell)}_{0,\tau} = f_0^\tau & \text{in $(0,T) \times M$,}\\
\mathcal{U}^{(\ell)}_{0,\tau}|_{x \in \partial M}={\p_{\nu_g}   \mathcal{U}^{(\ell)}_{0,\tau}}|_{x \in \partial M} = 0, & \text{on $(0,T) \times \partial M$,}\\
\mathcal{U}^{(\ell)}_{0,\tau}|_{t = T} = 0, & \text{in $M$}.
\end{array}\right.
\end{align}
As in the Euclidean case, we choose $\kappa$ large enough that $H^{4\kappa}_{00}$ is a Banach algebra. It can then be shown that the source $f_{j}^\tau\in H^{4\kappa}_{00}$ is determined by the source-to-solution map $L_{h,q,\beta}$, up to any error $\mathcal{O}(\tau^{-K})$ in the $H^{4\kappa}$-norm. 
\subsubsection*{\textbf{Step 2}} \tblue{Next, we apply the second-order linearization technique. Choosing small parameters $\varepsilon_1,\varepsilon_2>0$, we set $\varepsilon=(\varepsilon_1,\varepsilon_2)$ and introduce the source term $f^\tau=\varepsilon_1f_{1}^{\lambda'\tau}+\varepsilon_2f_{2}^{\lambda\tau}$. Provided $\varepsilon$ is sufficiently small, we have $f^\tau \in \mathcal{H}$, leading to the observation that}
$$ -\frac{1}{2}\p_{\varepsilon_1}\p_{\varepsilon_2}L_{h_\ell,q_\ell,\beta_\ell}f^\tau|_{\varepsilon=0}=w_\ell|_{(0,T)\times\Gamma},$$
where $w_\ell$ solves 
\begin{align}
\left\{\begin{array}{rll}
\I\p_tw_\ell+\Delta^2_g w_\ell+\Delta_{h_{\ell}} w_\ell+ q_\ell w_\ell &= \beta_\ell\,\mathcal{U}^{(\ell)}_{1,\lambda'\tau}\,\mathcal{U}^{(\ell)}_{2,\lambda\tau}\\
w_\ell|_{x \in \p M} &= {\p_{\nu_g} w_\ell}|_{x \in \p M} = 0\\
w_\ell|_{t=0} &= 0.
\end{array}\right.
\end{align}
This implies that
\begin{align*}\int_{(0,T)\times M}w_\ell\ \overline{(\I\partial_t+\Delta_g^2+\Delta_{h_{\ell}}+q_{\ell})\mathcal{U}^{(\ell)}_{0,\tau}}\ \df V_g\, \df t &=\int_{(0,T)\times\Gamma} w_\ell \overline{f^\tau_0}\ \df V_g\, \df t\\ &= -\frac{1}{2}\int_{(0,T)\times\Gamma}\partial_{\varepsilon_1}\partial_{\varepsilon_2}L_{h_{\ell},q_{\ell},\beta_{\ell}}f^\tau|_{\varepsilon=0}\ \overline{f^\tau_0}\ \df V_g\,\df t.
\end{align*}
Using integration by parts, this further entails
\begin{align}\int_{(0,T)\times M}(\I\partial_t+\Delta_g^2+\Delta_{h_\ell}+q_{\ell})w_\ell \ \overline{\mathcal{U}^{(\ell)}_{0,\tau}}\ \df V_g\, \df t &=\int_{(0,T)\times M} \beta_{\ell}\ \mathcal{U}^{(\ell)}_{1,\lambda'\tau}\  \mathcal{U}^{(\ell)}_{2,\lambda\tau} \ \overline{\mathcal{U}^{(\ell)}_{0,\tau}}\ \df V_g\, \df t \nonumber\\ \label{geome_before} &= -\frac{1}{2}\int_{(0,T)\times\Gamma}\partial_{\varepsilon_1}\partial_{\varepsilon_2}L_{h_{\ell},q_{\ell},\beta_{\ell}}f^\tau|_{\varepsilon=0}\ \overline{f^\tau_0}\ \df V_g\,\df t.
\end{align}
Next, subtracting the above equations corresponding to $\ell = 1$ and $\ell = 2$, and using the identity
$L_{h_{1},q_{1},\beta_{1}} = L_{h_{2},q_{2},\beta_{2}}$, we obtain that 
\begin{align}\label{geome_after}\int_{(0,T)\times M} \left(\beta_{1}\ \mathcal{U}^{(1)}_{1,\lambda'\tau}\  \mathcal{U}^{(1)}_{2,\lambda\tau} \ \overline{\mathcal{U}^{(1)}_{0,\tau}}-\beta_{2}\ \mathcal{U}^{(2)}_{1,\lambda'\tau}\  \mathcal{U}^{(2)}_{2,\lambda\tau} \ \overline{\mathcal{U}^{(2)}_{0,\tau}}\right)\ \df V_g\, \df t = 0.
\end{align}
We now like to approximate $\mathcal{U}^{(\ell)}_{j,\tau}$ by $\eta U^{(\ell)}_{j,\tau}$ in the integral \eqref{geome_after} for $j=1,2$ and $\mathcal{U}^{(\ell)}_{0,\tau}$ by $\tilde{\eta} U^{(\ell)}_{0,\tau}$. Therefore, choose $\kappa$ sufficiently large so that  $H^{4\kappa}_{00}$ is a Banach algebra, and let $N\geq4\kappa+4$. Then, up to an error of order $\mathcal{O}(\tau^{-4})$, the integral \eqref{geome_after} becomes equivalent to the integral
\[\int_{(0,T)\times M}\eta^2\tilde{\eta} \left(\beta_{1}\ U^{(1)}_{1,\lambda'\tau}\  U^{(1)}_{2,\lambda\tau} \ \overline{U^{(1)}_{0,\tau}}-\beta_{2}\ U^{(2)}_{1,\lambda'\tau}\  U^{(2)}_{2,\lambda\tau} \ \overline{U^{(2)}_{0,\tau}}\right)\, \df V_g\, \df t =0.\]
Further choose that $\eta=\tilde{\eta}=1$ in $\supp\big(U_{1,\lambda'\tau}\  U_{2,\lambda\tau} \ \overline{U_{0,\tau}}\big)$ and since $\phi(t)$ appearing in the definition of $a_0^{(j)}$ and $b_0^{(j)}$ is arbitrary, it follows that the above integral is determined for all $t\in(0, T)$
\begin{align}\label{integral_identity_1}
\int_{M}\left(\beta_{1}\ U^{(1)}_{1,\lambda'\tau}\  U^{(1)}_{2,\lambda\tau} \ \overline{U^{(1)}_{0,\tau}}-\beta_{2}\ U^{(2)}_{1,\lambda'\tau}\  U^{(2)}_{2,\lambda\tau} \ \overline{U^{(2)}_{0,\tau}}\right)\, \df V_g\, =0.
\end{align}
We start with analyzing the integral identity \eqref{integral_identity_1}. Since $\gamma_{p,\xi_0}\cap\gamma_{p,\xi_1}\cap\gamma_{p,\xi_2}=\{p\}$, we observe that the product\[\overline{U^{(1)}_{0,\tau}}U^{(1)}_{1,\lambda'\tau}U^{(1)}_{2,\lambda\tau}=e^{i\tau\Psi}a^{(0)}_\tau a^{(1)}_{\lambda'\tau}a^{(2)}_{\lambda\tau},  \quad \overline{U^{(2)}_{0,\tau}}U^{(2)}_{1,\lambda'\tau}U^{(2)}_{2,\lambda\tau}=e^{i\tau\Psi}b^{(0)}_\tau b^{(1)}_{\lambda'\tau}b^{(2)}_{\lambda\tau}\] are supported in a neighbourhood of $p$ and the function $\Psi$ is given by $ \Psi= \overline{\psi^{(0)}}+\lambda'\psi^{(1)}+\lambda\psi^{(2)}.$ 
\subsubsection*{\textbf{Step 3.}} We claim that the function $\Psi=-\overline{\psi^{(0)}}+\lambda'\psi^{(1)}+\lambda\psi^{(2)} $ satisfies the following two conditions: $\nabla_g\Psi(p)=0$, and  $\det(\nabla_g^2 \Psi)(p)\neq 0$. \tblue{The terms $\psi^{(j)}$ are approximate Gaussian beam solutions along the geodesics $\gamma_{p,\xi_j}$ ($j=0,1,2$), which intersect exclusively at $p \in M$.}
Note that, $\nabla_g\psi^{(j)}(p)=\xi_j$ for $j=0,1,2$. This implies \[\nabla_g\Psi(p)=-\xi_0+\lambda'\xi_1+\lambda\xi_2,\] which vanishes by the definition of admissible manifolds, see Definition \ref{def_admissible}.  

\noindent \tblue{To prove the last claim, it is sufficient to demonstrate that $D^2\Im\Psi(X,X)>0$ for all non-zero $X\in T_pM$. First, observe that}
\[\Im\Psi=\Im\psi^{(0)}+\lambda'\Im\psi^{(1)}+\lambda\Im\psi^{(2)},\]
\tblue{which implies the non-negativity condition $D^2\Im\Psi(X,X)\geq0$. To see this, observe that in Fermi coordinates along $\gamma_{p,\xi_j}$, we have}
\[D^2\Im\psi^{(j)}|_{\gamma_{p,\xi_j}}=\begin{pmatrix}0&0\\0&H_j\end{pmatrix}.\] 
\tblue{Here, $H_j$ denotes the appropriate matrix solution to \eqref{!riccati} along $\gamma_{p,\xi_j}$. Applying \eqref{!psi01} together with the lower bound $\Im(\psi)(r,y)\geq c|y|^2$, we see that for $j=0,1,2$:}
\[D^2\Im\psi^{(j)}(X,X) \geq0, \,\, \forall X\in T_pM \qum{and}  \quad D^2\Im\psi^{(j)}(X,X)>0, \,\, 
\forall X\in T_pM\setminus\vspan(\xi^{(j)}).\]
The claim follows from the fact that $\xi_1$ and $\xi_2$ are linearly independent. 
 \subsubsection*{\textbf{Step 4}} In this step, we show that $\beta_1=\beta_2$ in $M.$ From  \textbf{\emph{Step 3}}, the phase function $\Psi$ has  a non-degenerate critical point at $p$, we now apply method of stationary phase from \cite{Hormander_1}*{Theorem 7.7.5} to analyze the integral identity \eqref{integral_identity_1}. 
We can expand the amplitudes $a^{(j)}_\tau$ in terms of functions $a^{(j)}_k$ and $b^{(j)}_\tau$ in terms of functions $b^{(j)}_k$ as in \eqref{eq transport}. 
Thus we recover
\begin{align}
0 &= \int_{M}e^{i\tau\Psi}\left(\beta_{1}\ a^{(1)}_{\lambda'\tau}\  a^{(2)}_{\lambda\tau} \ {a^{(0)}_{\tau}}-\beta_{2}\ b^{(1)}_{\lambda'\tau}\  b^{(2)}_{\lambda\tau} \ {b^{(0)}_{\tau}}\right)\, \df V_g\,  \nonumber\\ &= \frac{e^{i\tau\Psi(p)}C_0}{\tau^{n/2}} \Bigg[\left(\beta_1(p)a_0^{(0)}a_0^{(1)}a_0^{(2)}(p)-\beta_2(p)b_0^{(0)}b_0^{(1)}b_0^{(2)}(p)\right)  \nonumber\\ & \quad \  \left. + \frac{1}{\tau}\left(\beta_1(p)\left( \sum_{i+j+k=1} \frac{1}{\lambda'^j}\frac{1}{\lambda^k}a_i^{(0)}a_j^{(1)}a_k^{(2)}(p)\right)   -\beta_2(p)\left(\sum_{i+j+k=1} \frac{1}{\lambda'^j}\frac{1}{\lambda^k}b_i^{(0)}b_j^{(1)}b_k^{(2)}(p) \right)\right)\right.  \nonumber\\ &  \left. \quad \ + \frac{1}{\tau^2}\left(\beta_1(p)\Bigg(\sum_{i+j+k=2} \frac{1}{\lambda'^j}\frac{1}{\lambda^k}a_i^{(0)}a_j^{(1)}a_k^{(2)}(p)\Bigg) -\beta_2(p)\Bigg(\sum_{i+j+k=2}\frac{1}{\lambda'^j}\frac{1}{\lambda^k}b_i^{(0)}b_j^{(1)}b_k^{(2)}(p)\Bigg)\right)\right.   \nonumber\\ & \quad \  \left. + \frac{1}{\tau^3}\left(\beta_1(p)\Bigg(\sum_{i+j+k=3}\frac{1}{\lambda'^j}\frac{1}{\lambda^k}a_i^{(0)}a_j^{(1)}a_k^{(2)}(p)\Bigg)  -\beta_2(p)\Bigg(\sum_{i+j+k=3}\frac{1}{\lambda'^j}\frac{1}{\lambda^k}b_i^{(0)}b_j^{(1)}b_k^{(2)}(p)\Bigg)\right) + \mathcal{O}(\tau^{-4})\right] \nonumber\\ \label{Integral_term}& = I_0+\tau^{-1}I_1+\tau^{-2}I_2+\tau^{-3}I_3+ \mathcal{O}(\tau^{-4}) 
\end{align}
\tblue{where $C_0$ is a constant independent of $(\beta, h, q)$. Using the relation $a_0^{(j)}(p)=b_0^{(j)}(p)$, we recover $\beta_1(p)=\beta_2(p)$ from the first term, $I_0$. Since the point $p\in M\setminus \Gamma$ was chosen arbitrarily, we uniquely recover the function $\beta(t,x)$ everywhere.}
\subsubsection*{\textbf{Step 5}}\tblue{In this step, our goal is to prove that $h_1 = h_2$ in $M$. The proof relies on examining $I_1$ and $I_2$: the former yields the equality $h_1^{11} = h_2^{11}$, while the latter allows us to recover $h_1^{1j} - h_2^{1j}$ for all $1 \le j \le n$. Finally, to reconstruct the other components of the metric $h$, we perform the same analysis on a small variation $\gamma_{\epsilon}$ of a geodesic $\gamma$. In a suitable system of Fermi coordinates, this variation is chosen to satisfy $\gamma_{\epsilon}(0) = \gamma(0) = p$ and $\displaystyle \gamma_{\epsilon}'(0) = \frac{1}{\sqrt{1+\epsilon^2}}(e_1 + \epsilon e_j)$ for $j \ge 2$.}
\\
\noindent We denote $\beta_1(p)=\beta_2(p)=\beta(p)$. Since $\beta(p)$ is known and non-zero for almost all $p$, this together with $a_0^{(j)}(p) = b_0^{(j)}(p)$ implies
$$(a_1^{(0)}- b_1^{(0)})(p) +\frac{1}{\lambda'}(a_1^{(1)}-b_1^{(1)})(p)+\frac{1}{\lambda}(a_1^{(2)}- b_1^{(2)})(p) =0.$$
Since $\lambda'^4=1-\lambda^4$, we can obtain the quantity
$$ \lim_{\lambda\to 0}\lambda\left((a_1^{(0)}- b_1^{(0)})(p) +\frac{1}{\lambda'}(a_1^{(1)}-b_1^{(1)})(p)+\frac{1}{\lambda}(a_1^{(2)}- b_1^{(2)})(p)\right) = (a_1^{(2)}- b_1^{(2)})(p) = 0.$$
We recall from \eqref{!split_1} that $a_1^{(2)}$ and $b_1^{(2)}$ are of the form $a_1^{(2)} = c_1^{(2)}+d_1^{(2)}$ and $b_1^{(2)} = e_1^{(2)}+f_1^{(2)}$, where it holds that
\begin{align*}
&c_{1,0}^{(2)}(r)=e^{-\frac{1}{2}\int_{r_0}^r\tr(H)(s)\df s}\int_{r_0}^r\frac{\I}{4}e^{\frac{1}{2}\int_{r_0}^s\tr(H)\df \tilde{s}}\left(2\frac{d}{dr}(\Delta_g\psi)a_{0,0}^{(2)}+4\tr(H)\frac{d}{dr}a_{0,0}^{(2)}+(\tr(H))^2a_{0,0}^{(2)} \right.\\ & \left. \qquad \qquad \qquad \qquad \qquad \qquad+4\frac{d^2}{dr^2}a_{0,0}^{(2)} +\Delta_g(|\df \psi|^2_g)a_{0,0}^{(2)}+2\Delta_g a_0^{(2)}\right)(s) \df s\\
&d_{1,0}^{(2)}(r)=e^{-\frac{1}{2}\int_{r_0}^r\tr(H)(s)\df s}\int_{r_0}^r\frac{\I}{4}e^{\frac{1}{2}\int_{r_0}^s\tr(H)\df \tilde{s}}h_1^{11}a_{0,0}^{(2)}(s)\df s.\\
&e_{1,0}^{(2)}(r)=e^{-\frac{1}{2}\int_{r_0}^r\tr(H)(s)\df s}\int_{r_0}^r\frac{\I}{4}e^{\frac{1}{2}\int_{r_0}^s\tr(H)\df \tilde{s}}\left(2\frac{d}{dr}(\Delta_g\psi)b_{0,0}^{(2)}+4\tr(H)\frac{d}{dr}b_{0,0}^{(2)}+(\tr(H))^2b_{0,0}^{(2)} \right.\\ & \left. \qquad \qquad \qquad \qquad \qquad \qquad+4\frac{d^2}{dr^2}b_{0,0}^{(2)} +\Delta_g(|\df \psi|^2_g)b_{0,0}^{(2)}+2\Delta_g b_0^{(2)}\right)(s) \df s\\
&f_{1,0}^{(2)}(r)=e^{-\frac{1}{2}\int_{r_0}^r\tr(H)(s)\df s}\int_{r_0}^r\frac{\I}{4}e^{\frac{1}{2}\int_{r_0}^s\tr(H)\df \tilde{s}}h_2^{11}b_{0,0}^{(2)}(s)\df s.
\end{align*}
\tblue{Notably, the terms $a_0^{(2)}$ and $b_0^{(2)}$ do not depend on the coefficients $h$ and $q$, which means $c_{1,0}^{(2)}$ and $e_{1,0}^{(2)}$ are also independent of them. The equality $a_0^{(2)}=b_0^{(2)}$ directly yields $c_1^{(2)}= e_1^{(2)}$, allowing us to deduce the quantity}
$$(d_1^{(2)}- f_1^{(2)})(p) =0.$$
Next, using the expressions of $d_1^{(2)}$ and $f_1^{(2)}$ together with expression of $a_{0,0}^{(2)}=b_{0,0}^{(2)}$ from \eqref{!amp00}, we evaluate $d_1^{(2)}-f_1^{(2)}$ at $p$. Furthermore,  letting $p= \gamma(r_1)$ we conclude $ \int_{r_0}^{r_1}(h_1^{11}-h_2^{11})(s)\, \df s =0,$
in the Fermi coordinates along $\gamma_{p,\xi_2}$.  Next, differentiating $\int_{r_0}^{r_1}(h_1^{11}-h_2^{11})(s)\, \df s=0$  with respect to the upper limit of integration, we conclude
\begin{align}
h_1^{11}(p)=h_2^{11}(p).
\end{align}
Next, we consider $I_2$
$$\Bigg(\sum_{i+j+k=2} \frac{1}{\lambda'^j}\frac{1}{\lambda^k}a_i^{(0)}a_j^{(1)}a_k^{(2)}(p)\Bigg) -\Bigg(\sum_{i+j+k=2}\frac{1}{\lambda'^j}\frac{1}{\lambda^k}b_i^{(0)}b_j^{(1)}b_k^{(2)}(p)\Bigg)=0.$$
Since $\lambda'^4=1-\lambda^4$, we can obtain the quantity
$$\lim_{\lambda\to 0}\lambda^2\Bigg(\sum_{i+j+k=2} \frac{1}{\lambda'^j}\frac{1}{\lambda^k}\left(a_i^{(0)}a_j^{(1)}a_k^{(2)}(p) -b_i^{(0)}b_j^{(1)}b_k^{(2)}(p)\right)\Bigg) = (a_2^{(2)}- b_2^{(2)})(p) = 0.$$
We recall from \eqref{!split_2} that $a_2^{(2)}$ and $b_2^{(2)}$ are of the form $a_2^{(2)} = c_2^{(2)}+d_2^{(2)}$ and $b_2^{(2)} = e_2^{(2)}+f_2^{(2)}$, where it hold that
\begin{align*}
c^{(2)}_{2,0}(r)&=e^{-\frac{1}{2}\int_{r_0}^r\tr(H)(s)\df s}\int_{r_0}^r\frac{\I}{4}e^{\frac{1}{2}\int_{r_0}^s\tr(H)\df \tilde{s}}\left(2\frac{d}{dr}(\Delta_g\psi)a^{(2)}_{1,0}+4\tr(H)\frac{d}{dr}a^{(2)}_{1,0}+(\tr(H))^2a^{(2)}_{1,0} \right.\\ & \left. \qquad +4\frac{d^2}{dr^2}a^{(2)}_{1,0} +\Delta_g(|\df\psi|^2_g)a^{(2)}_{1,0}+2\Delta_g a^{(2)}_1+h_1^{11}a^{(2)}_{1,0}-2\I \lr \df(\Delta_g\psi), \df a^{(2)}_0 \rn_g \right.\\ & \left. \qquad - \I\Delta_g^2\psi a_{0,0} -2\I \tr(H) \Delta_g a^{(2)}_0 -2\I \Delta_g(\lr \df\psi, \df a^{(2)}_0 \rn)-2\I \frac{d}{dr}(\Delta_ga^{(2)}_0) \right)(s) \df s\\
d^{(2)}_{2,0}(r) &=e^{-\frac{1}{2}\int_{r_0}^r\tr(H)(s)\df s}\int_{r_0}^r\frac{1}{4}e^{\frac{1}{2}\int_{r_0}^s\tr(H)\df \tilde{s}}\left((\Delta_{h_1}\psi) a^{(2)}_{0,0} + 2\sum_{j=2}^n h_1^{1j} \p_ja^{(2)}_0\right)(s)\df s.\\
e^{(2)}_{2,0}(r) &=e^{-\frac{1}{2}\int_{r_0}^r\tr(H)(s)\df s}\int_{r_0}^r\frac{\I}{4}e^{\frac{1}{2}\int_{r_0}^s\tr(H)\df \tilde{s}}\left(2\frac{d}{dr}(\Delta_g\psi)b^{(2)}_{1,0}+4\tr(H)\frac{d}{dr}b^{(2)}_{1,0}+(\tr(H))^2b^{(2)}_{1,0} \right.\\ & \left. \qquad +4\frac{d^2}{dr^2}b^{(2)}_{1,0} +\Delta_g(|\df \psi|^2_g)b^{(2)}_{1,0}+2\Delta_g b^{(2)}_1+h_2^{11}b^{(2)}_{1,0}-2\I \lr \df (\Delta_g\psi), \df b^{(2)}_0 \rn_g \right.\\ & \left. \qquad- \I\Delta_g^2\psi b_{0,0} -2\I \tr(H) \Delta_g b^{(2)}_0 -2\I \Delta_g(\lr \df \psi, \df b^{(2)}_0 \rn)-2\I \frac{d}{dr}(\Delta_gb^{(2)}_0) \right)(s) \df s\\
f^{(2)}_{2,0}(r)&=e^{-\frac{1}{2}\int_{r_0}^r\tr(H)(s)\df s}\int_{r_0}^r\frac{1}{4}e^{\frac{1}{2}\int_{r_0}^s\tr(H)\df \tilde{s}}\left((\Delta_{h_2}\psi) b^{(2)}_{0,0} + 2\sum_{j=2}^n h_2^{1j} \p_jb^{(2)}_0\right)(s)\df s.
\end{align*}
In particular, $a_0^{(2)}, a_1^{(2)}, b_0^{(2)}$ and $b_1^{(2)}$ are independent of the coefficients $h^{1j}$ for $j=2,\dots,n$ and $q$, so is $c_{2,0}^{(2)}$ and $e_{2,0}^{(2)}$. Since $a_0^{(2)}=b_0^{(2)}$ and $a_1^{(2)}=b_1^{(2)}$ as $h_1^{11}=h_2^{11}$, we have $c_2^{(2)}= e_2^{(2)}$. Thus, using this to determine the quantity
$$(d_2^{(2)}- f_2^{(2)})(p) =0.$$
Next, we take $a_{0,0}(r_0)=0$ and $b_{0,0}(r_0)=0$ in condition \eqref{!amp00} to get $a_{0,0}^{(2)}=b_{0,0}^{(2)}=0$. Also, assume $a^{(2)}_{0,1}$ and $b^{(2)}_{0,1}$ as a function of only $y_j$ for $j = 2,\dots,n$ and using condition $a^{(2)}_{0,1}(r_0) = b^{(2)}_{0,1}(r_0) = 1$ in \eqref{eq_transport_v_0j}, substitute this in the expressions of $d_2^{(2)}$ and $f_2^{(2)}$ and we evaluate $d_2^{(2)}-f_2^{(2)}$ at $p$, letting $p=\gamma(r_1)$ to deduce
\begin{align}\label{h_j comp}
\frac{1}{4}\int_{r_0}^{r_1}(h_1^{1j}-h_2^{1j})(s)\df s=0.
\end{align}
in the Fermi coordinates along $\gamma_{p,\xi_2}$. We can recover $h_1^{1j}(p)-h_2^{1j}(p)=0$ for each $j = 2,\dots,n$ by differentiating \eqref{h_j comp} with respect to the upper limit of integration. Consider the small perturbations of $\dot{\gamma}(r_1)= e_1=(1,0,\dots,0)\in \mathbb{R}^{n}$, given by 
$$e'_2 = \frac{1}{\sqrt{1+\varepsilon^2}}(1,\varepsilon,0,\dots,0), \dots, e'_{n}=\frac{1}{\sqrt{1+\varepsilon^2}}(1,0,\dots,0,\varepsilon),$$
for $\varepsilon > 0$ small. By \cite[Proposition D.2]{Krupchyk_Uhlmann_magnetic}, \tblue{For sufficiently small $\varepsilon>0$, the unit-speed geodesic $\gamma_{e'_k}$ (for $k=2,\dots,n$) passing through $(p,e'_k)$ has no self-intersections and is non-tangential at the boundary. Applying the preceding discussion to $\gamma=\gamma_{e'_k}$ and evaluating the term $I_1$, we obtain}
\begin{align*}
\frac{\I}{4}\int_{r_0}^{r_1}(h_1^{11}+2\varepsilon h_1^{1k}+\varepsilon^2 h_1^{kk}-h_2^{11}-2\varepsilon h_2^{1k}-\varepsilon^2 h_2^{kk})(s)\df s=0.
\end{align*}
Since $h_1^{11}=h_2^{11}$ and $h_1^{1k}=h_2^{1k}$, we get
\begin{align}\label{h_2 comp}
\int_{r_0}^{r_1}(\varepsilon^2 h_1^{kk}-\varepsilon^2 h_2^{kk})(s)\df s=0.
\end{align}
\tblue{Differentiating \eqref{h_2 comp} with respect to its upper limit of integration allows us to recover the identity $h_1^{kk}(p)-h_2^{kk}(p)=0$. Turning our attention to the $I_2$ term and applying analogous reasoning, we find that}
\begin{align*}
\frac{1}{4}\int_{r_0}^{r_1}(h_1^{1j}+\varepsilon h_1^{kj}-h_2^{1j}-\varepsilon h_2^{kj})(s)\df s=0.
\end{align*}
Since $h_1^{1j}=h_2^{1j}$, we get 
\begin{align}\label{h_kj comp}
\frac{1}{4}\int_{r_0}^{r_1}e^{-\frac{1}{2}\int_{r_0}^s\tr(H)\df \tilde{s}}(\varepsilon h_1^{kj}-\varepsilon h_2^{kj})(s)\df s=0.
\end{align}
We can recover $h_1^{kj}(p)-h_2^{kj}(p)=0$ for each $j = 2,\dots,n$ by differentiating \eqref{h_kj comp} with respect to the upper limit of integration. Hence $h_1^{kj}(p)=h_2^{kj}(p)$ for each $k,j=2,\dots,n$. This together with $h_1^{11}(p)=h_2^{11}(p)$ and  $h_1^{1j}(p)=h_2^{1j}(p)$ for each $j=2,\dots,n$, we conclude 
$h_1(p)=h_2(p)$. Since $p\in M\setminus \Gamma$ was arbitrary, we uniquely recover the function $h(t,x)$ everywhere.
\subsubsection*{\textbf{Step 6}} We show that $q_1= q_2$ in $M$.
To achieve this, we consider $I_3$ term from \eqref{Integral_term} and utilize $\beta_1(p)=\beta_2(p)$, to deduce
\begin{align*}
\Bigg(\sum_{i+j+k=3}\frac{1}{\lambda'^j}\frac{1}{\lambda^k}a_i^{(0)}a_j^{(1)}a_k^{(2)}(p)\Bigg)- \Bigg(\sum_{i+j+k=3}\frac{1}{\lambda'^j}\frac{1}{\lambda^k}b_i^{(0)}b_j^{(1)}b_k^{(2)}(p)\Bigg)=0
\end{align*}
Since $\lambda'^4=1-\lambda^4$, we can obtain the quantity
\begin{align*}
\lim_{\lambda\to 0}\lambda^3\Bigg(\sum_{i+j+k=3}\frac{1}{\lambda'^j}\frac{1}{\lambda^k}\left(a_i^{(0)}a_j^{(1)}a_k^{(2)}(p)-b_i^{(0)}b_j^{(1)}b_k^{(2)}(p)\right)\Bigg) = (a_3^{(2)}-b_3^{(2)})(p)=0.
\end{align*}
We recall from \eqref{!split_3} that $a_3^{(2)}$ and $b_3^{(2)}$ are of the form $a_3^{(2)} = c_3^{(2)}+d_3^{(2)}$ and $b_3^{(2)} = e_3^{(2)}+f_3^{(2)}$, where it hold that
\begin{equation}\begin{aligned}
c^{(2)}_{3,0}(r)&=e^{-\frac{1}{2}\int_{r_0}^r\tr(H)(s)\df s}\int_{r_0}^r\frac{\I}{4}e^{\frac{1}{2}\int_{r_0}^s\tr(H)\df \tilde{s}}\left(2\frac{d}{dr}(\Delta_g\psi)a^{(2)}_{2,0}+4\tr(H)\frac{d}{dr}a^{(2)}_{2,0}+(\tr(H))^2a^{(2)}_{2,0} \right.\\ & \left. \qquad +4\frac{d^2}{dr^2}a^{(2)}_{2,0} +\Delta_g(|\df \psi|^2_g)a^{(2)}_{2,0}+2\Delta_g a^{(2)}_2+h_1^{11}a^{(2)}_{2,0}-2\I \lr \df (\Delta_g\psi), \df a^{(2)}_1 \rn_g - \I\Delta_g^2\psi a^{(2)}_{1,0} \right.\\ & \left. \qquad -2\I \tr(H) \Delta_g a^{(2)}_1 -2\I \Delta_g(\lr \df \psi, \df a^{(2)}_1 \rn)-2\I \frac{d}{dr}(\Delta_ga^{(2)}_1) -\I (\Delta_{h_1}\psi) a^{(2)}_{1,0} -2\I h_1^{1j} \p_ja^{(2)}_1 \right)(s) \df s\\
d^{(2)}_{3,0}(r)&= -e^{-\frac{1}{2}\int_{r_0}^r\tr(H)(s)\df s}\int_{r_0}^r\frac{\I}{4}e^{\frac{1}{2}\int_{r_0}^s\tr(H)\df \tilde{s}}  P_{h_1,q_1}a^{(2)}_{0}(s)\df s\\
e^{(2)}_{3,0}(r)&=e^{-\frac{1}{2}\int_{r_0}^r\tr(H)(s)\df s}\int_{r_0}^r\frac{\I}{4}e^{\frac{1}{2}\int_{r_0}^s\tr(H)\df \tilde{s}}\left(2\frac{d}{dr}(\Delta_g\psi)b^{(2)}_{2,0}+4\tr(H)\frac{d}{dr}b^{(2)}_{2,0}+(\tr(H))^2b^{(2)}_{2,0} +4\frac{d^2}{dr^2}b^{(2)}_{2,0}   \right.\\ & \left. \qquad +\Delta_g(|\df \psi|^2_g)b^{(2)}_{2,0}+2\Delta_g b^{(2)}_2+h_2^{11}b^{(2)}_{2,0}-2\I \lr \df (\Delta_g\psi), \df b^{(2)}_1 \rn_g - \I\Delta_g^2\psi b^{(2)}_{1,0} -2\I \tr(H) \Delta_g b^{(2)}_1 \right.\\ & \left. \qquad - 2\I \Delta_g(\lr \df \psi, \df b^{(2)}_1 \rn) -2\I \frac{d}{dr}(\Delta_gb^{(2)}_1)-\I (\Delta_{h_2}\psi) b^{(2)}_{1,0} -2\I h_2^{1j} \p_jb^{(2)}_1 \right)(s) \df s\\
f^{(2)}_{3,0}(r)&= -e^{-\frac{1}{2}\int_{r_0}^r\tr(H)(s)\df s}\int_{r_0}^r\frac{\I}{4}e^{\frac{1}{2}\int_{r_0}^s\tr(H)\df \tilde{s}} P_{h_2,q_2}b^{(2)}_{0}(s)\df s.
\end{aligned} 
\end{equation}
In particular, $a_0^{(2)}, a_1^{(2)}, a_2^{(2)}, b_0^{(2)}, b_1^{(2)}$ and $b_2^{(2)}$ are independent of the coefficient $q$, so is $c_{3,0}^{(2)}$ and $e_{3,0}^{(2)}$. Since $a_0^{(2)}=b_0^{(2)}$, $a_1^{(2)}=b_1^{(2)}$ and $a_2^{(2)}=b_2^{(2)}$ as $h_1=h_2$, we have $c_3^{(2)}= e_3^{(2)}$. Thus, using this to determine the quantity
$$(d_3^{(2)}- f_3^{(2)})(p) =0.$$
\tblue{We substitute this result into the expressions for $d_3^{(2)}$ and $f_3^{(2)}$, utilizing the identity $a_{0,0}^{(2)}=b_{0,0}^{(2)}$ from \eqref{!amp00}. Evaluating the difference $d_3^{(2)}-f_3^{(2)}$ at the point $p=\gamma(r_1)$ then yields}
\begin{align}\label{q comp}
\frac{\I}{4}  \int_{r_0}^{r_1}\left(q_1-q_2\right)(s)\df s =0
\end{align}
\tblue{in Fermi coordinates along $\gamma_{p,\xi_2}$. By differentiating \eqref{h_j comp} with respect to its upper limit of integration, we deduce that $q_1(p)-q_2(p)=0$. Since the point $p\in M\setminus \Gamma$ was chosen arbitrarily, it follows that the function $q(t,x)$ is uniquely recovered everywhere.}
\end{proof}
\section*{Conflict of interest statement} All the authors declare that they have no known competing financial interests or personal relationships that could have appeared to influence the work reported in this manuscript.
\section*{acknowledgements} 
SKS  and AP are  supported by IIT Bombay
seed grant (RD/0524-IRCCSH0-021) and ANRF Early Career Research Grant (ECRG) (RD/0125-
ANRF000-016).  
\section*{Data availability statement}
Data availability is not applicable to this article as no new data were created or analysed in this
study.

\bibliography{bibfile}

\begin{bibdiv}
\begin{biblist}

\bib{AJS_bihar_nonlinear}{article}{
      author={Agrawal, Divyansh},
      author={Jaiswal, Ravi~Shankar},
      author={Sahoo, Suman~Kumar},
       title={The linearized partial data {C}alder\'on problem for biharmonic operators},
        date={2024},
        ISSN={0362-546X,1873-5215},
     journal={Nonlinear Anal.},
      volume={244},
       pages={Paper No. 113544, 12},
         url={https://doi.org/10.1016/j.na.2024.113544},
      review={\MR{4734533}},
}

\bib{Ibtissem_dynamical_Schroedinger}{article}{
      author={Ben~A\"icha, Ibtissem},
       title={Stability estimate for an inverse problem for the {S}chr\"odinger equation in a magnetic field with time-dependent coefficient},
        date={2017},
        ISSN={0022-2488,1089-7658},
     journal={J. Math. Phys.},
      volume={58},
      number={7},
       pages={071508, 21},
         url={https://doi.org/10.1063/1.4995606},
      review={\MR{3680328}},
}

\bib{Bellassoued_Ben_IPI}{article}{
      author={Bellassoued, Mourad},
      author={Ben~Fraj, Oumaima},
       title={Stability estimates for time-dependent coefficients appearing in the magnetic {S}chr\"odinger equation from arbitrary boundary measurements},
        date={2020},
        ISSN={1930-8337,1930-8345},
     journal={Inverse Probl. Imaging},
      volume={14},
      number={5},
       pages={841\ndash 865},
         url={https://doi.org/10.3934/ipi.2020039},
      review={\MR{4128445}},
}

\bib{BG_JFAA}{article}{
      author={Bhattacharyya, Sombuddha},
      author={Ghosh, Tuhin},
       title={Inverse boundary value problem of determining up to a second order tensor appear in the lower order perturbation of a polyharmonic operator},
        date={2019},
        ISSN={1069-5869,1531-5851},
     journal={J. Fourier Anal. Appl.},
      volume={25},
      number={3},
       pages={661\ndash 683},
         url={https://doi.org/10.1007/s00041-018-9625-3},
      review={\MR{3953481}},
}

\bib{BG_MAA}{article}{
      author={Bhattacharyya, Sombuddha},
      author={Ghosh, Tuhin},
       title={An inverse problem on determining second order symmetric tensor for perturbed biharmonic operator},
        date={2022},
        ISSN={0025-5831,1432-1807},
     journal={Math. Ann.},
      volume={384},
      number={1-2},
       pages={457\ndash 489},
         url={https://doi.org/10.1007/s00208-021-02276-6},
      review={\MR{4476229}},
}

\bib{Bhattacharyya-Kumar}{article}{
      author={Bhattacharyya, Sombuddha},
      author={Kumar, Pranav},
       title={Local data inverse problem for the polyharmonic operator with anisotropic perturbations},
        date={2024},
        ISSN={0266-5611,1361-6420},
     journal={Inverse Problems},
      volume={40},
      number={5},
       pages={Paper No. 055004, 22},
      review={\MR{4723844}},
}

\bib{BansalKrishnanPattar}{article}{
      author={Bansal, Rajat},
      author={Krishnan, Venkateswaran~P.},
      author={Pattar, Rahul~Raju},
       title={Determination of lower order perturbations of a polyharmonic operator in two dimensions},
        date={2025},
     journal={Journal of Inverse and Ill-posed Problems},
      volume={33},
      number={1},
       pages={1\ndash 9},
         url={https://doi.org/10.1515/jiip-2023-0067},
}

\bib{BKS_MRT_polyharmonic}{article}{
      author={Bhattacharyya, Sombuddha},
      author={Krishnan, Venkateswaran~P.},
      author={Sahoo, Suman~K.},
       title={Momentum ray transforms and a partial data inverse problem for a polyharmonic operator},
        date={2023},
        ISSN={0036-1410,1095-7154},
     journal={SIAM J. Math. Anal.},
      volume={55},
      number={4},
       pages={4000\ndash 4038},
         url={https://doi.org/10.1137/22M1500617},
      review={\MR{4631015}},
}

\bib{BKSU_biharmonic_nonlinear}{article}{
      author={Bhattacharyya, Sombuddha},
      author={Krupchyk, Katya},
      author={Sahoo, Suman~Kumar},
      author={Uhlmann, Gunther},
       title={Inverse problems for third-order nonlinear perturbations of biharmonic operators},
        date={2025},
        ISSN={0360-5302,1532-4133},
     journal={Comm. Partial Differential Equations},
      volume={50},
      number={3},
       pages={407\ndash 440},
         url={https://doi.org/10.1080/03605302.2024.2444972},
      review={\MR{4870992}},
}

\bib{Baudouin_schrodinger}{article}{
      author={Baudouin, Lucie},
      author={Puel, Jean-Pierre},
       title={Uniqueness and stability in an inverse problem for the {S}chr\"odinger equation},
        date={2002},
        ISSN={0266-5611,1361-6420},
     journal={Inverse Problems},
      volume={18},
      number={6},
       pages={1537\ndash 1554},
         url={https://doi.org/10.1088/0266-5611/18/6/307},
      review={\MR{1955903}},
}

\bib{Calderon_problem}{incollection}{
      author={Calder\'on, Alberto-P.},
       title={On an inverse boundary value problem},
        date={1980},
   booktitle={Seminar on {N}umerical {A}nalysis and its {A}pplications to {C}ontinuum {P}hysics ({R}io de {J}aneiro, 1980)},
   publisher={Soc. Brasil. Mat., Rio de Janeiro},
       pages={65\ndash 73},
      review={\MR{590275}},
}

\bib{Catalin_Ghosh_Nakamura_aniso_poros_media}{article}{
      author={C\^{a}rstea, C\u{a}t\u{a}lin~I.},
      author={Ghosh, Tuhin},
      author={Nakamura, Gen},
       title={An {I}nverse {B}oundary {V}alue {P}roblem for the {I}nhomogeneous {P}orous {M}edium {E}quation},
        date={2025},
        ISSN={0036-1399},
     journal={SIAM J. Appl. Math.},
      volume={85},
      number={1},
       pages={278\ndash 293},
         url={https://doi.org/10.1137/23M1623197},
      review={\MR{4860199}},
}

\bib{Catalin_Ghosh_Uhlmann_poros_media}{article}{
      author={C\^{a}rstea, C\u{a}t\u{a}lin~I.},
      author={Ghosh, Tuhin},
      author={Uhlmann, Gunther},
       title={An inverse problem for the porous medium equation with partial data and a possibly singular absorption term},
        date={2023},
        ISSN={0036-1410},
     journal={SIAM J. Math. Anal.},
      volume={55},
      number={1},
       pages={162\ndash 185},
         url={https://doi.org/10.1137/21M1465573},
      review={\MR{4538897}},
}

\bib{CKS_SIAM}{article}{
      author={Choulli, Mourad},
      author={Kian, Yavar},
      author={Soccorsi, Eric},
       title={Stable determination of time-dependent scalar potential from boundary measurements in a periodic quantum waveguide},
        date={2015},
        ISSN={0036-1410,1095-7154},
     journal={SIAM J. Math. Anal.},
      volume={47},
      number={6},
       pages={4536\ndash 4558},
         url={https://doi.org/10.1137/140986268},
      review={\MR{3427046}},
}

\bib{CLLO_jde}{article}{
      author={C\^arstea, C\u at\u alin~I.},
      author={Lassas, Matti},
      author={Liimatainen, Tony},
      author={Oksanen, Lauri},
       title={An inverse problem for the {R}iemannian minimal surface equation},
        date={2024},
        ISSN={0022-0396,1090-2732},
     journal={J. Differential Equations},
      volume={379},
       pages={626\ndash 648},
         url={https://doi.org/10.1016/j.jde.2023.10.039},
      review={\MR{4660625}},
}

\bib{Dos_Jems}{article}{
      author={Dos Santos~Ferreira, David},
      author={Kurylev, Yaroslav},
      author={Lassas, Matti},
      author={Salo, Mikko},
       title={The {C}alder\'{o}n problem in transversally anisotropic geometries},
        date={2016},
        ISSN={1435-9855},
     journal={J. Eur. Math. Soc. (JEMS)},
      volume={18},
      number={11},
       pages={2579\ndash 2626},
         url={https://doi.org/10.4171/JEMS/649},
      review={\MR{3562352}},
}

\bib{DOS}{article}{
      author={Dos Santos~Ferreira, D.},
      author={Kenig, C.},
      author={Sj\"{o}strand, J.},
      author={Uhlmann, G.},
       title={Determining a magnetic schr\"{o}dinger operator from partial cauchy data},
        date={2007},
     journal={Comm. Math. Phys.},
      volume={2},
       pages={467\ndash 488},
}

\bib{Nikolas_Stefanov_nonlinear}{article}{
      author={Eptaminitakis, Nikolas},
      author={Stefanov, Plamen},
       title={Weakly nonlinear geometric optics for the {W}estervelt equation and recovery of the nonlinearity},
        date={2024},
        ISSN={0036-1410,1095-7154},
     journal={SIAM J. Math. Anal.},
      volume={56},
      number={1},
       pages={801\ndash 819},
         url={https://doi.org/10.1137/22M1543379},
      review={\MR{4688692}},
}

\bib{Eskin_JMP}{article}{
      author={Eskin, G.},
       title={Inverse problems for the {S}chr\"odinger equations with time-dependent electromagnetic potentials and the {A}haronov-{B}ohm effect},
        date={2008},
        ISSN={0022-2488,1089-7658},
     journal={J. Math. Phys.},
      volume={49},
      number={2},
       pages={022105, 18},
         url={https://doi.org/10.1063/1.2841329},
      review={\MR{2392842}},
}

\bib{Evans_Book}{book}{
      author={Evans, Lawrence~C.},
       title={Partial differential equations},
     edition={Second},
      series={Graduate Studies in Mathematics},
   publisher={American Mathematical Society, Providence, RI},
        date={2010},
      volume={19},
        ISBN={978-0-8218-4974-3},
         url={https://doi.org/10.1090/gsm/019},
      review={\MR{2597943}},
}

\bib{Evans_book_KAM}{article}{
      author={Evans, Lawrence~Craig},
       title={New identities for weak {KAM} theory},
        date={2017},
        ISSN={0252-9599,1860-6261},
     journal={Chinese Ann. Math. Ser. B},
      volume={38},
      number={2},
       pages={379\ndash 392},
         url={https://doi.org/10.1007/s11401-017-1074-9},
      review={\MR{3615495}},
}

\bib{FATY_ann_pde}{article}{
      author={Feizmohammadi, Ali},
      author={Liimatainen, Tony},
      author={Lin, Yi-Hsuan},
       title={An inverse problem for a semilinear elliptic equation on conformally transversally anisotropic manifolds},
        date={2023},
        ISSN={2524-5317,2199-2576},
     journal={Ann. PDE},
      volume={9},
      number={2},
       pages={Paper No. 12, 54},
         url={https://doi.org/10.1007/s40818-023-00153-w},
      review={\MR{4610907}},
}

\bib{FO_jde}{article}{
      author={Feizmohammadi, Ali},
      author={Oksanen, Lauri},
       title={An inverse problem for a semi-linear elliptic equation in {R}iemannian geometries},
        date={2020},
        ISSN={0022-0396,1090-2732},
     journal={J. Differential Equations},
      volume={269},
      number={6},
       pages={4683\ndash 4719},
         url={https://doi.org/10.1016/j.jde.2020.03.037},
      review={\MR{4104456}},
}

\bib{FO_semilinear_elliptic}{article}{
      author={Feizmohammadi, Ali},
      author={Oksanen, Lauri},
       title={An inverse problem for a semi-linear elliptic equation in {R}iemannian geometries},
        date={2020},
        ISSN={0022-0396},
     journal={J. Differential Equations},
      volume={269},
      number={6},
       pages={4683\ndash 4719},
         url={https://doi.org/10.1016/j.jde.2020.03.037},
      review={\MR{4104456}},
}

\bib{GK_applicable_analysis}{article}{
      author={Ghosh, Tuhin},
      author={Krishnan, Venkateswaran~P.},
       title={Determination of lower order perturbations of the polyharmonic operator from partial boundary data},
        date={2016},
        ISSN={0003-6811,1563-504X},
     journal={Appl. Anal.},
      volume={95},
      number={11},
       pages={2444\ndash 2463},
         url={https://doi.org/10.1080/00036811.2015.1092522},
      review={\MR{3546596}},
}

\bib{Hormander_1}{book}{
      author={H\"ormander, Lars},
       title={The analysis of linear partial differential operators. {I}},
      series={Classics in Mathematics},
   publisher={Springer-Verlag, Berlin},
        date={2003},
        ISBN={3-540-00662-1},
         url={https://doi.org/10.1007/978-3-642-61497-2},
        note={Distribution theory and Fourier analysis, Reprint of the second (1990) edition [Springer, Berlin; MR1065993 (91m:35001a)]},
      review={\MR{1996773}},
}

\bib{Hintz_survey}{article}{
      author={Hintz, Peter},
      author={Uhlmann, Gunther},
      author={Zhai, Jian},
       title={The {D}irichlet-to-{N}eumann map for a semilinear wave equation on {L}orentzian manifolds},
        date={2022},
        ISSN={0360-5302,1532-4133},
     journal={Comm. Partial Differential Equations},
      volume={47},
      number={12},
       pages={2363\ndash 2400},
         url={https://doi.org/10.1080/03605302.2022.2122837},
      review={\MR{4526896}},
}

\bib{Isakov_Nachman_TAMS}{article}{
      author={Isakov, Victor},
      author={Nachman, Adrian~I.},
       title={Global uniqueness for a two-dimensional semilinear elliptic inverse problem},
        date={1995},
        ISSN={0002-9947,1088-6850},
     journal={Trans. Amer. Math. Soc.},
      volume={347},
      number={9},
       pages={3375\ndash 3390},
         url={https://doi.org/10.2307/2155015},
      review={\MR{1311909}},
}

\bib{Isakov_Sylvester_CPAM}{article}{
      author={Isakov, Victor},
      author={Sylvester, John},
       title={Global uniqueness for a semilinear elliptic inverse problem},
        date={1994},
        ISSN={0010-3640,1097-0312},
     journal={Comm. Pure Appl. Math.},
      volume={47},
      number={10},
       pages={1403\ndash 1410},
         url={https://doi.org/10.1002/cpa.3160471005},
      review={\MR{1295934}},
}

\bib{isakov1993uniqueness}{article}{
      author={Isakov, V.},
       title={On uniqueness in inverse problems for semilinear parabolic equations},
        date={1993},
        ISSN={0003-9527},
     journal={Arch. Rational Mech. Anal.},
      volume={124},
      number={1},
       pages={1\ndash 12},
         url={https://doi.org/10.1007/BF00392201},
      review={\MR{1233645}},
}

\bib{JNS2023}{misc}{
      author={Johansson, David},
      author={Nurminen, Janne},
      author={Salo, Mikko},
       title={Inverse problems for semilinear elliptic pde with a general nonlinearity $a(x,u)$},
        date={2023},
}

\bib{JNS_low_regu}{misc}{
      author={Johansson, David},
      author={Nurminen, Janne},
      author={Salo, Mikko},
       title={Inverse problems for semilinear elliptic equations with low regularity},
        date={2025},
         url={https://arxiv.org/abs/2505.05278},
}

\bib{KARPMAN1994355}{article}{
      author={Karpman, V.~I.},
       title={Solitons of the fourth order nonlinear schrödinger equation},
        date={1994},
        ISSN={0375-9601},
     journal={Physics Letters A},
      volume={193},
      number={4},
       pages={355\ndash 358},
}

\bib{Karpman1995PRL}{article}{
      author={Karpman, V.~I.},
        date={1995},
     journal={Physical Review Letters},
      volume={74},
       pages={2455\ndash },
}

\bib{Lassas2001boundary}{book}{
      author={Katchalov, Alexander},
      author={Kurylev, Yaroslav},
      author={Lassas, Matti},
       title={Inverse boundary spectral problems},
      series={Chapman \& Hall/CRC Monographs and Surveys in Pure and Applied Mathematics},
   publisher={Chapman \& Hall/CRC, Boca Raton, FL},
        date={2001},
      volume={123},
        ISBN={1-58488-005-8},
         url={https://doi.org/10.1201/9781420036220},
      review={\MR{1889089}},
}

\bib{Kian_Kru_Uhlmann}{article}{
      author={Kian, Yavar},
      author={Krupchyk, Katya},
      author={Uhlmann, Gunther},
       title={Partial data inverse problems for quasilinear conductivity equations},
        date={2023},
        ISSN={0025-5831},
     journal={Math. Ann.},
      volume={385},
      number={3-4},
       pages={1611\ndash 1638},
         url={https://doi.org/10.1007/s00208-022-02367-y},
      review={\MR{4566701}},
}

\bib{Klibanov}{article}{
      author={Klibanov, Michael~V.},
       title={Inverse problems and {C}arleman estimates},
        date={1992},
        ISSN={0266-5611,1361-6420},
     journal={Inverse Problems},
      volume={8},
      number={4},
       pages={575\ndash 596},
         url={https://doi.org/10.1088/0266-5611/8/4/009},
      review={\MR{1178231}},
}

\bib{Kian_forum}{article}{
      author={Kian, Yavar},
      author={Liimatainen, Tony},
      author={Lin, Yi-Hsuan},
       title={On determining and breaking the gauge class in inverse problems for reaction-diffusion equations},
        date={2024},
        ISSN={2050-5094},
     journal={Forum Math. Sigma},
      volume={12},
       pages={Paper No. e25, 42},
         url={https://doi.org/10.1017/fms.2024.18},
      review={\MR{4710715}},
}

\bib{KRU2}{article}{
      author={Krupchyk, Katsiaryna},
      author={Lassas, Matti},
      author={Uhlmann, Gunther},
       title={Determining a first order perturbation of the biharmonic operator by partial boundary measurements},
        date={2012},
        ISSN={0022-1236},
     journal={J. Funct. Anal.},
      volume={262},
      number={4},
       pages={1781\ndash 1801},
         url={https://doi.org/10.1016/j.jfa.2011.11.021},
      review={\MR{2873860}},
}

\bib{KRU1}{article}{
      author={Krupchyk, Katsiaryna},
      author={Lassas, Matti},
      author={Uhlmann, Gunther},
       title={Inverse boundary value problems for the perturbed polyharmonic operator},
        date={2014},
        ISSN={0002-9947},
     journal={Trans. Amer. Math. Soc.},
      volume={366},
      number={1},
       pages={95\ndash 112},
         url={https://doi.org/10.1090/S0002-9947-2013-05713-3},
      review={\MR{3118392}},
}

\bib{KLU_invention}{article}{
      author={Kurylev, Yaroslav},
      author={Lassas, Matti},
      author={Uhlmann, Gunther},
       title={Inverse problems for {L}orentzian manifolds and non-linear hyperbolic equations},
        date={2018},
        ISSN={0020-9910,1432-1297},
     journal={Invent. Math.},
      volume={212},
      number={3},
       pages={781\ndash 857},
         url={https://doi.org/10.1007/s00222-017-0780-y},
      review={\MR{3802298}},
}

\bib{Venky_Rohit}{article}{
      author={Krishnan, Venkateswaran~P.},
      author={Mishra, Rohit~Kumar},
       title={Microlocal analysis of a restricted ray transform on symmetric {$m$}-tensor fields in {$\Bbb{R}^n$}},
        date={2018},
        ISSN={0036-1410,1095-7154},
     journal={SIAM J. Math. Anal.},
      volume={50},
      number={6},
       pages={6230\ndash 6254},
         url={https://doi.org/10.1137/18M1178530},
      review={\MR{3885753}},
}

\bib{KMS_fractional_nonlinear}{article}{
      author={Kow, Pu-Zhao},
      author={Ma, Shiqi},
      author={Sahoo, Suman~Kumar},
       title={An inverse problem for semilinear equations involving the fractional {L}aplacian},
        date={2023},
        ISSN={0266-5611,1361-6420},
     journal={Inverse Problems},
      volume={39},
      number={9},
       pages={Paper No. 095006, 27},
         url={https://doi.org/10.1088/1361-6420/ace9f4},
      review={\MR{4629230}},
}

\bib{KARPMAN2000194}{article}{
      author={Karpman, V.I},
      author={Shagalov, A.G},
       title={Stability of solitons described by nonlinear schrödinger-type equations with higher-order dispersion},
        date={2000},
        ISSN={0167-2789},
     journal={Physica D: Nonlinear Phenomena},
      volume={144},
      number={1},
       pages={194\ndash 210},
         url={https://www.sciencedirect.com/science/article/pii/S0167278900000786},
}

\bib{Kenig_Salo_Survey}{incollection}{
      author={Kenig, Carlos},
      author={Salo, Mikko},
       title={Recent progress in the {C}alder\'on problem with partial data},
        date={2014},
   booktitle={Inverse problems and applications},
      series={Contemp. Math.},
      volume={615},
   publisher={Amer. Math. Soc., Providence, RI},
       pages={193\ndash 222},
         url={http://dx.doi.org/10.1090/conm/615/12245},
      review={\MR{3221605}},
}

\bib{Kian_Soccorsi_Holder_stability_Scrodinger}{article}{
      author={Kian, Yavar},
      author={Soccorsi, Eric},
       title={H\"older stably determining the time-dependent electromagnetic potential of the {S}chr\"odinger equation},
        date={2019},
        ISSN={0036-1410,1095-7154},
     journal={SIAM J. Math. Anal.},
      volume={51},
      number={2},
       pages={627\ndash 647},
         url={https://doi.org/10.1137/18M1197308},
      review={\MR{3919408}},
}

\bib{kuchment_cone_trans}{article}{
      author={Kuchment, Peter},
      author={Terzioglu, Fatma},
       title={Inversion of weighted divergent beam and cone transforms},
        date={2017},
        ISSN={1930-8337},
     journal={Inverse Probl. Imaging},
      volume={11},
      number={6},
       pages={1071\ndash 1090},
         url={https://doi.org/10.3934/ipi.2017049},
      review={\MR{3708176}},
}

\bib{Kian_Tetlow_Holder_stability_dynamical_Scrodinger}{article}{
      author={Kian, Yavar},
      author={Tetlow, Alexander},
       title={H\"older-stable recovery of time-dependent electromagnetic potentials appearing in a dynamical anisotropic {S}chr\"odinger equation},
        date={2020},
        ISSN={1930-8337,1930-8345},
     journal={Inverse Probl. Imaging},
      volume={14},
      number={5},
       pages={819\ndash 839},
         url={https://doi.org/10.3934/ipi.2020038},
      review={\MR{4128444}},
}

\bib{Kian_Uhl_arma}{article}{
      author={Kian, Yavar},
      author={Uhlmann, Gunther},
       title={Recovery of nonlinear terms for reaction diffusion equations from boundary measurements},
        date={2023},
        ISSN={0003-9527,1432-0673},
     journal={Arch. Ration. Mech. Anal.},
      volume={247},
      number={1},
       pages={Paper No. 6, 20},
         url={https://doi.org/10.1007/s00205-022-01831-y},
      review={\MR{4531030}},
}

\bib{Kruchyk_Uhl}{article}{
      author={Krupchyk, Katya},
      author={Uhlmann, Gunther},
       title={Inverse problems for nonlinear magnetic {S}chr\"{o}dinger equations on conformally transversally anisotropic manifolds},
        date={2023},
        ISSN={2157-5045},
     journal={Anal. PDE},
      volume={16},
      number={8},
       pages={1825\ndash 1868},
         url={https://doi.org/10.2140/apde.2023.16.1825},
      review={\MR{4657146}},
}

\bib{Krupchyk_Uhlmann_magnetic}{article}{
      author={Krupchyk, Katya},
      author={Uhlmann, Gunther},
       title={Inverse problems for nonlinear magnetic {S}chr\"odinger equations on conformally transversally anisotropic manifolds},
        date={2023},
        ISSN={2157-5045,1948-206X},
     journal={Anal. PDE},
      volume={16},
      number={8},
       pages={1825\ndash 1868},
         url={https://doi.org/10.2140/apde.2023.16.1825},
      review={\MR{4657146}},
}

\bib{LLLS_JMPA}{article}{
      author={Lassas, Matti},
      author={Liimatainen, Tony},
      author={Lin, Yi-Hsuan},
      author={Salo, Mikko},
       title={Inverse problems for elliptic equations with power type nonlinearities},
        date={2021},
        ISSN={0021-7824,1776-3371},
     journal={J. Math. Pures Appl. (9)},
      volume={145},
       pages={44\ndash 82},
         url={https://doi.org/10.1016/j.matpur.2020.11.006},
      review={\MR{4188325}},
}

\bib{LLLS_Revista}{article}{
      author={Lassas, Matti},
      author={Liimatainen, Tony},
      author={Lin, Yi-Hsuan},
      author={Salo, Mikko},
       title={Partial data inverse problems and simultaneous recovery of boundary and coefficients for semilinear elliptic equations},
        date={2021},
        ISSN={0213-2230,2235-0616},
     journal={Rev. Mat. Iberoam.},
      volume={37},
      number={4},
       pages={1553\ndash 1580},
         url={https://doi.org/10.4171/rmi/1242},
      review={\MR{4269409}},
}

\bib{LLPT_apde}{article}{
      author={Lassas, Matti},
      author={Liimatainen, Tony},
      author={Potenciano-Machado, Leyter},
      author={Tyni, Teemu},
       title={Stability and {L}orentzian geometry for an inverse problem of a semilinear wave equation},
        date={2025},
        ISSN={2157-5045,1948-206X},
     journal={Anal. PDE},
      volume={18},
      number={5},
       pages={1065\ndash 1118},
         url={https://doi.org/10.2140/apde.2025.18.1065},
      review={\MR{4904383}},
}

\bib{LLS_Math_ann}{article}{
      author={Lassas, Matti},
      author={Liimatainen, Tony},
      author={Salo, Mikko},
       title={The {P}oisson embedding approach to the {C}alder\'on problem},
        date={2020},
        ISSN={0025-5831,1432-1807},
     journal={Math. Ann.},
      volume={377},
      number={1-2},
       pages={19\ndash 67},
         url={https://doi.org/10.1007/s00208-019-01818-3},
      review={\MR{4099622}},
}

\bib{LLST_fractional_power}{article}{
      author={Liimatainen, Tony},
      author={Lin, Yi-Hsuan},
      author={Salo, Mikko},
      author={Tyni, Teemu},
       title={Inverse problems for elliptic equations with fractional power type nonlinearities},
        date={2022},
        ISSN={0022-0396},
     journal={J. Differential Equations},
      volume={306},
       pages={189\ndash 219},
         url={https://doi.org/10.1016/j.jde.2021.10.015},
      review={\MR{4332042}},
}

\bib{Lion-Magenes2}{book}{
      author={Lions, J.-L.},
      author={Magenes, E.},
       title={Probl\`emes aux limites non homog\`enes et applications. {V}ol. 1},
      series={Travaux et Recherches Math\'ematiques},
   publisher={Dunod, Paris},
        date={1968},
      volume={No. 17},
      review={\MR{247243}},
}

\bib{LOSST_NLS_2025}{article}{
      author={Lassas, Matti},
      author={Oksanen, Lauri},
      author={Sahoo, Suman~Kumar},
      author={Salo, Mikko},
      author={Tetlow, Alexander},
       title={Coefficient determination for nonlinear {S}chr\"odinger equations on manifolds},
        date={2025},
        ISSN={0036-1410,1095-7154},
     journal={SIAM J. Math. Anal.},
      volume={57},
      number={4},
       pages={4425\ndash 4458},
         url={https://doi.org/10.1137/24M1704257},
      review={\MR{4941933}},
}

\bib{lai2024partialdatainverseproblems}{misc}{
      author={Lai, Ru-Yu},
      author={Uhlmann, Gunther},
      author={Yan, Lili},
       title={Partial data inverse problems for the nonlinear magnetic schr\"odinger equation},
        date={2024},
         url={https://arxiv.org/abs/2411.06369},
}

\bib{Lai_arma}{article}{
      author={Lai, Ru-Yu},
      author={Uhlmann, Gunther},
      author={Zhou, Hanming},
       title={Recovery of coefficients in semilinear transport equations},
        date={2024},
        ISSN={0003-9527,1432-0673},
     journal={Arch. Ration. Mech. Anal.},
      volume={248},
      number={4},
       pages={Paper No. 62, 41},
         url={https://doi.org/10.1007/s00205-024-02007-6},
      review={\MR{4760979}},
}

\bib{Lai_Zhou}{article}{
      author={Lai, Ru-Yu},
      author={Zhou, Ting},
       title={Partial data inverse problems for nonlinear magnetic {S}chr\"{o}dinger equations},
        date={2023},
        ISSN={1073-2780},
     journal={Math. Res. Lett.},
      volume={30},
      number={5},
       pages={1535\ndash 1563},
         url={https://doi.org/10.4310/mrl.2023.v30.n5.a10},
      review={\MR{4747871}},
}

\bib{nurminen_sahoo}{misc}{
      author={Nurminen, Janne},
      author={Sahoo, Suman~Kumar},
       title={An inverse problem for a nonlinear biharmonic operator},
        date={2025},
         url={https://arxiv.org/abs/2504.06624},
}

\bib{Janne_ms_nonlinear_ana}{article}{
      author={Nurminen, Janne},
       title={An inverse problem for the minimal surface equation},
        date={2023},
        ISSN={0362-546X,1873-5215},
     journal={Nonlinear Anal.},
      volume={227},
       pages={Paper No. 113163, 19},
         url={https://doi.org/10.1016/j.na.2022.113163},
      review={\MR{4503820}},
}

\bib{Janne_ms_nonlinearity}{article}{
      author={Nurminen, Janne},
       title={An inverse problem for the minimal surface equation in the presence of a {R}iemannian metric},
        date={2024},
        ISSN={0951-7715,1361-6544},
     journal={Nonlinearity},
      volume={37},
      number={9},
       pages={Paper No. 095029, 22},
         url={https://doi.org/10.1088/1361-6544/ad6949},
      review={\MR{4785481}},
}

\bib{PSU_book}{book}{
      author={Paternain, Gabriel},
      author={Salo, Mikko},
      author={Uhlmann, Gunther},
       title={Geometric inverse problems---with emphasis on two dimensions},
      series={Cambridge Studies in Advanced Mathematics},
   publisher={Cambridge University Press, Cambridge},
        date={2023},
      volume={204},
        ISBN={978-1-316-51087-2},
      review={\MR{4520155}},
}

\bib{Rakesh_Salo_Siam}{article}{
      author={Rakesh},
      author={Salo, Mikko},
       title={Fixed angle inverse scattering for almost symmetric or controlled perturbations},
        date={2020},
        ISSN={0036-1410,1095-7154},
     journal={SIAM J. Math. Anal.},
      volume={52},
      number={6},
       pages={5467\ndash 5499},
         url={https://doi.org/10.1137/20M1319309},
      review={\MR{4170189}},
}

\bib{Rakesh_Salo_IP}{article}{
      author={Rakesh},
      author={Salo, Mikko},
       title={The fixed angle scattering problem and wave equation inverse problems with two measurements},
        date={2020},
        ISSN={0266-5611,1361-6420},
     journal={Inverse Problems},
      volume={36},
      number={3},
       pages={035005, 42},
         url={https://doi.org/10.1088/1361-6420/ab23a2},
      review={\MR{4068234}},
}

\bib{simon2024gaussianbeamsinverseproblems}{misc}{
      author={St-Amant, Simon},
       title={Gaussian beams and inverse problems for connections at high fixed frequency},
        date={2024},
         url={https://arxiv.org/abs/2402.13854},
}

\bib{Barreto_Stefanov_nonlinear}{article}{
      author={S\'a{}~Barreto, Ant\^onio},
      author={Stefanov, Plamen},
       title={Recovery of a general nonlinearity in the semilinear wave equation},
        date={2024},
        ISSN={0921-7134,1875-8576},
     journal={Asymptot. Anal.},
      volume={138},
      number={1-2},
       pages={27\ndash 68},
         url={https://doi.org/10.3233/asy-231890},
      review={\MR{4765428}},
}

\bib{SS_linearized}{article}{
      author={Sahoo, Suman~Kumar},
      author={Salo, Mikko},
       title={The linearized {C}alder\'on problem for polyharmonic operators},
        date={2023},
        ISSN={0022-0396,1090-2732},
     journal={J. Differential Equations},
      volume={360},
       pages={407\ndash 451},
         url={https://doi.org/10.1016/j.jde.2023.03.017},
      review={\MR{4562046}},
}

\bib{sylvester1987global}{article}{
      author={Sylvester, John},
      author={Uhlmann, Gunther},
       title={A global uniqueness theorem for an inverse boundary value problem},
        date={1987},
        ISSN={0003-486X,1939-8980},
     journal={Ann. of Math. (2)},
      volume={125},
      number={1},
       pages={153\ndash 169},
         url={https://doi.org/10.2307/1971291},
      review={\MR{873380}},
}

\bib{Sun_EJDE}{article}{
      author={Sun, Ziqi},
       title={An inverse boundary-value problem for semilinear elliptic equations},
        date={2010},
        ISSN={1072-6691},
     journal={Electron. J. Differential Equations},
       pages={No. 37, 5},
      review={\MR{2602870}},
}

\bib{Uhl_survey}{article}{
      author={Uhlmann, G.},
       title={Electrical impedance tomography and {C}alder\'{o}n's problem},
        date={2009},
        ISSN={0266-5611},
     journal={Inverse Problems},
      volume={25},
      number={12},
       pages={123011, 39},
         url={https://doi.org/10.1088/0266-5611/25/12/123011},
      review={\MR{3460047}},
}

\bib{Uhl_Zhai_math_annalen}{article}{
      author={Uhlmann, Gunther},
      author={Zhai, Jian},
       title={Determination of the density in a nonlinear elastic wave equation},
        date={2024},
        ISSN={0025-5831,1432-1807},
     journal={Math. Ann.},
      volume={390},
      number={2},
       pages={2825\ndash 2858},
         url={https://doi.org/10.1007/s00208-024-02797-w},
      review={\MR{4801841}},
}

\end{biblist}
\end{bibdiv}
\bibliographystyle{siam}
\end{document}